\documentclass[11pt,reqno,a4paper]{amsart}

\usepackage{a4wide}
\usepackage{amssymb}
\usepackage{amsfonts}
\usepackage{enumerate}
\usepackage{amsmath}
\usepackage{bbm}
\setcounter{MaxMatrixCols}{10}
\usepackage{ stmaryrd }

\newtheorem{theorem}{Theorem}[section]

\newtheorem{definition}[theorem]{Definition}
\newtheorem{example}[theorem]{Example}

\newtheorem{lemma}[theorem]{Lemma}

\newtheorem{proposition}[theorem]{Proposition}
\newtheorem{remark}[theorem]{Remark}

\def\eps{\varepsilon}

\def\N{\mathbb{N}}

\def\R{\mathbb{R}}
\def\P{\mathbb{P}}
\def\E{\mathbb{E}}

\def\F{\mathcal{F}}

\DeclareMathOperator{\var}{var}

\def\lt{\left}
\def\rt{\right}

\begin{document}

\title[Shortest distance between multiple orbits]{Shortest distance between multiple orbits and generalized fractal dimensions}
\author{Vanessa Barros, J\'er\^ome Rousseau}
\address{Vanessa Barros and J\'er\^ome Rousseau, Departamento de Matem\'atica, Universidade Federal da Bahia\\
Av. Ademar de Barros s/n, 40170-110 Salvador, Brazil}
\address{Departamento de Matem\'atica, Faculdade de Ci\^encias da Universidade do Porto,\\Rua do Campo Alegre, 687, 4169-007 Porto, Portugal}
\email{vbarrosoliveira@gmail.com}
\urladdr{https://sites.google.com/site/vbarrosoliveira/home}
\email{jerome.rousseau@ufba.br}
\urladdr{http://www.sd.mat.ufba.br/~jerome.rousseau}

\keywords{Shortest distance, generalized fractal dimensions, decay of correlations, random dynamical systems, R\'enyi entropy, longest common substring, string matching}
\thanks{This work was partially supported by CNPq, by FCT project PTDC/MAT-PUR/28177/2017, with national funds, and by CMUP (UID/MAT/00144/2019), which is funded by FCT with national (MCTES) and European structural funds through the programs FEDER, under the partnership agreement PT2020.}

\maketitle

\begin{abstract}
We consider rapidly mixing dynamical systems and link the decay of the shortest distance between multiple orbits with the generalized fractal dimension. We apply this result to multidimensional expanding maps and extend it to the realm of random dynamical systems. For random sequences, we obtain a relation between the longest common substring between multiple sequences and the generalized R\'enyi entropy. Applications to Markov chains, Gibbs states and the stochastic scrabble are given.
\end{abstract}

\section{Introduction}
Generalized fractal dimensions were originally introduced to characterize and measure the strangeness of chaotic attractors and, more generally, to describe the fractal structure of invariant sets in dynamical systems \cite{grass,gp1,gp2}.

Given $k>1$, the generalized fractal dimension (also known as $L^q$ or $HP$ dimensions) of a measure $\mu$ is defined (provided the limit exists) by:
\[{D}_k(\mu)=\underset{r\rightarrow0}{\lim}\frac{\log\int_X\mu\left(B\left(x,r\right)\right)^{k-1}d\mu(x)}{(k-1)\log r}.\]
For the existence of these dimensions, their properties and relations with other dimensions, one can see e.g. \cite{barbaroux,Fan,Pes, PW}.

Since estimation of the generalized dimensions plays an important role in the description of dynamical systems, different numerical approaches and procedures have been developed to compute them (see e.g. \cite{alber, badii1,badii2,vaienti-gen, caby-vaienti,pastor} and references within). In particular, we highlight \cite{faranda-vaienti} where Extreme Value Theory (EVT) was used as a tool to estimate the correlation dimension  ${D}_2(\mu)$, and \cite{caby-vaienti} for generalized dimensions. For a deeper discussion of EVT for dynamical systems we refer the reader to \cite{book-evt}.

It is also worth mentioning the connection between generalized dimensions and the recurrence properties of the dynamics.
Return time dimensions and generalized fractal dimensions were thoroughly compared in \cite{luevano,mantica}. Moreover, they appear in the rate function for the large deviations of the return time \cite{ caby-vaienti,largedev}.

In this communication we study, for a dynamical system $(X,T,\mu)$, the behaviour of the shortest distance between $k$ orbits, i.e. for $(x_1,\dots,x_k)\in X^k$:
\begin{equation}\label{mn}m_n(x_1,\dots,x_k)=\min_{i_1,\dots,i_k=0,\dots,n-1}\left(d(T^{i_1}x_1,\dots,T^{i_k}x_k)\right),\end{equation}
where $d(x_1,\dots,x_k)=\underset{i\neq j}{\max} \ d(x_i,x_j)$, and show a relation between this shortest distance and the generalized fractal dimensions.

Indeed, if the generalized dimension exists, then under some rapid mixing conditions on the system $(X,T,\mu)$, for $\mu\otimes\dots\otimes\mu$-almost every $(x_1,\dots,x_k)\in X^k$, we have

\begin{equation}\label{a}
\underset{n\rightarrow+\infty}{\lim}\frac{\log m_n(x_1,\dots,x_k)}{-\log n}=\frac{k}{(k-1){D}_k(\mu)}.
\end{equation}
In particular, we apply these results to the multidimensional expanding maps defined by Saussol \cite{Saussol1}. Moreover, we also prove an annealed version of \eqref{a} for the shortest distance between $k$ orbits of a random dynamical system.

These results extend and complement those in  \cite{BaLiRo} and \cite{CoLaRo} where identity \eqref{a} (and its equivalent for random dynamical systems) was proved for two orbits ($k=2$).

Furthermore, it was shown in \cite{BaLiRo} that the problem of the shortest distance between orbits is a generalization of the longest common substring problem for random sequences, a problem thoroughly investigated in genetics, probability and computer science (see e.g \cite{W-book}). More precisely,  for $\alpha$-mixing systems, they study the behaviour of the length of the longest common substring between two sequences $x$ and $y$:
\[M_n(x,y)=\max\{m:x_{i+k}=y_{j+k}\textrm{ for $k=1,\dots,m$ and for some $0\leq i,j\leq n-m$}\},\]
and generalized the work of Arratia and Waterman \cite{AW} where only independent irreducible and aperiodic Markov chains on a finite alphabet were considered. More recently, similar results for encoded sequences \cite{CoLaRo}, random sequences in random environment \cite{LCS-random}, stationary determinantal process on the integer lattice \cite{FLQ} and also for the longest matching consecutive subsequence between two $N$-ary expansions \cite{LiYang} were obtained.


Following the ideas in \cite{BaLiRo}, we extend here our study to the longest common substring between multiple sequences (previous results in this direction were obtained in \cite{KO,KO1}). More precisely, for $k$ sequences $x^1,\dots, x^k$, we define the length of the longest common substring by
\begin{align*}
M_n&(x^1,\dots, x^k)\\
&=\max\{m:x^1_{i_1+j}=...=x^k_{i_k+j}\textrm{ for $j=0,...,m-1$ and for some $0\leq i_1,...,i_k\leq n-m$}\}.
\end{align*}
and link it to the generalized R\'enyi entropy (provided that it exists, see e.g. \cite{AbCa, AbLa2,haydn-vaienti,luc}):
\[{H}_k=\underset{n\rightarrow+\infty}{\lim}\frac{\log\sum \P(C_n)^k}{-(k-1)n},\]
where the sum is taken over all $n$-cylinders $C_n$ (see Section \ref{sec-lcs} for a precise definition).

Thus, we prove that for $\alpha$-mixing systems with exponential decay (and $\psi$-mixing with polynomial decay), if the generalized R\'enyi entropy exists, then for $\P^k$-almost every $(x^1,\dots,x^k)$,
\[ \underset{n\rightarrow+\infty}{\lim}\frac{M_n(x^1,\dots,x^k)}{\log n}=\frac{k}{(k-1)H_k}.\]
Moreover, we also prove a version of this result for encoded sequences.

The paper is organized as follows. Our main results linking the shortest distance between multiple orbits and the generalized fractal dimensions are stated in Section~\ref{secshort} and proved in Section~\ref{sec-proof}. An application of these results for multidimensional expanding maps is given in Section~\ref{secexp}. Shortest distance between multiple observed orbits and random orbits are studied in Section~\ref{secrandom}.
 In Section~\ref{sec-lcs}, we study the longest common substring problem for multiple random sequences (and encoded sequences) and its relation with the generalized R\'enyi entropy. These results are proved in Section~\ref{sec-discrete}. 

\section{Shortest distance between $k$ orbits}\label{secshort} 

Let $(X,d)$ be a finite dimensional metric space and $\mathcal{A}$ its Borel $\sigma$-algebra.
Let $(X,\mathcal{A},\mu,T)$ be a measure preserving system which means that $T:X\rightarrow X$ is a transformation on $X$ and $\mu$ is a probability measure on $(X,\mathcal{A})$ such that $\mu$ is invariant by $T$, i.e., $\mu(T^{-1}A)=\mu(A)$ for all $A\in\mathcal{A}$. We will denote by $\mu^k$ the product measure $\mu\otimes\dots\otimes\mu$.

We would like to study the behaviour of the shortest distance between $k$ orbits:
\[m_n(x_1,\dots,x_k)=\min_{i_1,\dots,i_k=0,\dots,n-1}\left(d(T^{i_1}x_1,\dots,T^{i_k}x_k)\right)\]
where $d(x_1,\dots,x_k)=\max_{i\neq j} d(x_i,x_j)$.
\begin{remark} Other definitions could have been chosen for $d(x_1,\dots,x_k)$ without altering our results (see e.g. \cite{joly,nway} and references therein for examples of generalizations of the usual two-way distance).
For example, we could have used  $d_1(x_1,\dots,x_k)=\min_{z\in X}\max_{i} d(x_i,z)$, or  $d_2(x_1,\dots,x_k)=\sqrt{\sum_{i\neq j}d(x_i,x_j)^2}$ but our results would have been the same since $d,\ d_1,$ and $d_2$ are equivalent.
\end{remark}

We will show that the behaviour of $m_n$ as $n\rightarrow\infty$ is linked with the generalized fractal dimension. Before stating the first theorem, we recall, for $k>1$, the definition of the lower and upper generalized fractal dimensions of $\mu$:
\[\underline{D}_k(\mu)=\underset{r\rightarrow0}{\underline\lim}\frac{\log\int_X\mu\left(B\left(x,r\right)\right)^{k-1}d\mu(x)}{(k-1)\log r}\qquad\textrm{and}\qquad\overline{D}_k(\mu)=\underset{r\rightarrow0}{\overline\lim}\frac{\log\int_X\mu\left(B\left(x,r\right)\right)^{k-1}d\mu(x)}{(k-1)\log r}.\]
When the limit exists we will denote the common value of $\underline{D}_k(\mu)$ and $\overline{D}_k(\mu)$ by ${D}_k(\mu)$. 

\begin{theorem}\label{thineq}

Let $(X,\mathcal{A},\mu,T)$ be a measure preserving system such that $\underline{D}_k(\mu)>0$. Then for $\mu^k$-almost every $(x_1,\dots, x_k)\in X^k$,
\[ \underset{n\rightarrow+\infty}{\overline\lim}\frac{\log m_n(x_1,\dots,x_k)}{-\log n}\leq\frac{k}{(k-1)\underline{D}_k(\mu)}.\]

\end{theorem}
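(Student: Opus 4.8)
The plan is to show that the event $\{m_n(x_1,\dots,x_k) < r\}$ can be controlled by a first-moment (Markov) estimate, then choose $r = r_n$ shrinking polynomially in $n$ and apply Borel--Cantelli along a suitable subsequence. First I would fix a small $\eps > 0$ and set $s = (k-1)\underline{D}_k(\mu) - \eps$ (so that $s < (k-1)\underline{D}_k(\mu)$), aiming for the bound $\overline{\lim}\, \log m_n / (-\log n) \le k/s$, and then let $\eps \to 0$.

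The key computation is the first-moment bound. For fixed $r > 0$,
\[
\mu^k\big(\{(x_1,\dots,x_k): m_n(x_1,\dots,x_k) < r\}\big)
\le \sum_{i_1,\dots,i_k = 0}^{n-1} \mu^k\big(\{d(T^{i_1}x_1,\dots,T^{i_k}x_k) < r\}\big).
\]
Using invariance of $\mu$ in each coordinate and then integrating out $x_2,\dots,x_k$ against the ball condition $d(x_i,x_j) < r$ for all $i \ne j$, each summand is at most $\int_X \mu(B(x,r))^{k-1}\, d\mu(x)$; this is exactly where the integral defining $\underline{D}_k(\mu)$ enters. Hence
\[
\mu^k\big(\{m_n < r\}\big) \le n^k \int_X \mu(B(x,r))^{k-1}\, d\mu(x).
\]
By definition of $\underline{D}_k(\mu)$, for all sufficiently small $r$ we have $\int_X \mu(B(x,r))^{k-1}\, d\mu(x) \le r^{s}$ with $s$ as above (since $(k-1)\underline{D}_k(\mu) > s$). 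Therefore $\mu^k(\{m_n < r\}) \le n^k r^{s}$.

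Now I would choose $r_n = n^{-k/s}\,(\log n)^{-a}$ for a suitable exponent (say, $a > 1/s$, or more cleanly work along the subsequence $n_\ell = 2^\ell$ to avoid any loss). Then $n^k r_n^{s} = (\log n)^{-as}$, whose sum over $n = 2^\ell$ (i.e. $\sum_\ell (\ell \log 2)^{-as}$) converges provided $as > 1$. Borel--Cantelli then gives that for $\mu^k$-a.e. $(x_1,\dots,x_k)$, $m_{n_\ell} \ge r_{n_\ell}$ for all large $\ell$, so $\log m_{n_\ell}/(-\log n_\ell) \le k/s + o(1)$. To pass from the subsequence to the full limsup I would use the monotonicity $m_n \ge m_{n'}$ for $n \le n'$ and the fact that $\log n_{\ell+1}/\log n_\ell \to 1$ for the dyadic subsequence; this sandwiches the general-$n$ ratio by the subsequential one up to a factor tending to $1$. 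Finally, letting $\eps \to 0$ (equivalently $s \to (k-1)\underline{D}_k(\mu)$) along a countable sequence yields the claimed inequality on a set of full $\mu^k$-measure.

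The main obstacle I anticipate is purely bookkeeping rather than conceptual: making the reduction of each summand $\mu^k(\{d(T^{i_1}x_1,\dots,T^{i_k}x_k) < r\})$ to $\int_X \mu(B(x,r))^{k-1}\,d\mu(x)$ fully rigorous. One must use invariance coordinatewise, then for fixed $x_1 = x$ observe that the slice $\{(x_2,\dots,x_k): d(x_i,x_j) < r \ \forall i\ne j\}$ is contained in $B(x,r)^{k-1}$ (since $d(x,x_i) < r$ for each $i \ge 2$), giving $\mu(B(x,r))^{k-1}$ after Fubini; the choice of $d$ as the max of pairwise distances makes this containment immediate. No rapid-mixing hypothesis is needed here — this is the "easy," purely measure-theoretic half of the dichotomy, and the only quantitative input is the upper bound on the correlation integral coming from the definition of $\underline{D}_k(\mu)$.
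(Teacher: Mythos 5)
Your proposal is correct and takes essentially the same first-moment route as the paper: Markov/union bound to obtain $\mu^k(\{m_n<r\})\le n^k\int_X\mu(B(x,r))^{k-1}\,d\mu$, a polynomial choice of $r_n$ calibrated against $(k-1)\underline{D}_k(\mu)-\eps$, and Borel--Cantelli along a geometric subsequence with monotonicity of $m_n$ to fill in the gaps. The only cosmetic difference is that the paper phrases the bound through the auxiliary quantity $\int_{X^k}\prod_{j<l}\mathbbm{1}_{B(x_j,r)}(x_l)\,d\mu^k$ (which equals $\E(S_n)/n^k$ for the counting function $S_n$ it reuses in the variance estimate of Theorem~\ref{dsup}) together with Lemma~\ref{lemmagendim}, whereas you inline the easy half of that lemma — the containment of the slice in $B(x,r)^{k-1}$ — directly; this is harmless and arguably cleaner for this theorem on its own.
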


This general result can be applied to any dynamical system such that $\underline{D}_k(\mu)>0$. Even if the inequality in Theorem \ref{thineq} can be strict (noting for example the trivial case when $T$ is the identity), we will prove that an equality holds under some rapidly mixing conditions:

(H1) There exists a Banach space $\mathcal{C}$, such that for all $\psi,\ \phi\in \mathcal{C}$ and for all $n\in\N^*$,  we have
\[\left|\int_X\psi.\phi\circ T^n\, d\mu-\int_X \psi d\mu\int_X\phi d \mu\right|\leq\|\psi\|_\mathcal{C}\|\phi\|_\mathcal{C}\theta_n,\]
with $\theta_n =a^{n}$ ($0\leq a<1$) and where $\|\cdot\|_\mathcal{C}$ is the norm in the Banach space $\mathcal{C}$.

(H2) There exist $0<r_0<1$, $c\geq0$ and $\xi \geq0$ such that for every $p\in\{1,\dots,k\}$, for $\mu^{k-p}$-almost every $x_{p+1},\dots,x_k\in X$ and any $0<r<r_0$, the function $\psi_p:X\rightarrow \R$, defined below, belongs to the Banach space $\mathcal{C}$ and verify
\[\|\psi_p\|_\mathcal{C}\leq cr^{-\xi}.\]

Fixed $x_2,\dots, x_k \in X$, we define
\begin{equation}\label{psi1}\psi_1(x)= \prod_{j=2}^k \mathbbm{1}_{B(x_j,r)}(x). \end{equation}

For $p>1$, we fix $x_{p+1},\dots,x_k \in X$, and set
\begin{equation}\label{psi2}\psi_p(x)=\bar\psi(x,x_{p+1},\dots,x_k), \text{ where}\end{equation}
\begin{align*}\bar\psi(&x_p,x_{p+1},\dots,x_k)\\
 &=\prod_{l=p+1}^k \mathbbm{1}_{B(x_l,r)}(x_p) \int_{X^{p-1}}\left[\prod_{j=1}^{p-1}\prod_{l=j+1}^{k}  \mathbbm{1}_{B(x_j,r)}(x_l)\right]d\mu^{p-1}(x_1,\dots,x_{p-1}).\end{align*}

When the Banach space $\mathcal{C}$ is the space of H\"older functions $\mathcal{H}^\alpha(X,\R)$, we will replace our assumption (H2) by an assumption easier to interpret in Theorem~\ref{thdsuphold}.

We will also need some topological information on the space $X$.
\begin{definition}\label{deftight}
A separable metric space $(X,d)$ is called tight if there exist $r_0>0$ and $N_0\in \N$, such that for any $0 < r < r_0$ and any $x\in X$ one can cover $B(x, 2r)$ by at most $N_0$ balls of radius $r$.
\end{definition}
We emphasize that any subset of $\R^n$ with the Euclidian metric is tight, any subset of a Riemannian manifold of bounded curvature is tight and that if $(X,d)$ admits a doubling measure then it is tight \cite{GY}. 

Now we can state our main result.
\begin{theorem}\label{thprinc}
Let $(X,\mathcal{A},\mu,T)$ be a measure preserving system, such that $(X,d)$ is tight, satisfying (H1) and (H2) and such that ${D}_k(\mu)$ exists and is strictly positive. Then for $\mu^k$-almost every $(x_1,\dots,x_k)\in X^k$,
\[ \underset{n\rightarrow+\infty}{\lim}\frac{\log m_n(x_1,\dots,x_k)}{-\log n}=\frac{k}{(k-1){D}_k(\mu)}.\]
\end{theorem}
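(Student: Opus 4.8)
The plan is to prove two matching inequalities.

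The upper bound $\limsup_{n\to\infty}\frac{\log m_n}{-\log n}\le \frac{k}{(k-1)D_k(\mu)}$ is already furnished by Theorem~\ref{thineq} (applied with $\underline{D}_k(\mu)=D_k(\mu)$), so the entire work lies in establishing the lower bound
\[
\liminf_{n\to\infty}\frac{\log m_n(x_1,\dots,x_k)}{-\log n}\ge\frac{k}{(k-1)D_k(\mu)}
\]
for $\mu^k$-a.e. $(x_1,\dots,x_k)$. Equivalently, fixing a small $\eps>0$ and writing $s=\frac{k}{(k-1)(D_k(\mu)+\eps)}$, one must show that for $\mu^k$-a.e.\ point one has $m_n\ge n^{-s}$ for all large $n$; by Borel--Cantelli it suffices to prove that
\[
\sum_{n}\mu^k\Bigl(\bigl\{(x_1,\dots,x_k): m_n(x_1,\dots,x_k)< n^{-s}\bigr\}\Bigr)<\infty,
\]
possibly after passing to a subsequence $n_\ell$ (e.g.\ $n_\ell=\ell^\beta$) and then filling in the gaps by monotonicity of the relevant events in $r$.

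The core estimate is a bound on $\mu^k\{m_n<r\}$ for $r=n^{-s}$. Since $m_n<r$ means $d(T^{i_1}x_1,\dots,T^{i_k}x_k)<r$ for some choice of times, a union bound over the $n^k$ time-tuples reduces matters to controlling, for fixed times $i_1,\dots,i_k$, the measure $\mu^k\{(x_1,\dots,x_k): T^{i_j}x_j\in B(T^{i_1}x_1,r)\text{ for all }j\}$. Using $T$-invariance of $\mu$ this is estimated by the nested integral $\int_X\cdots\int_X\prod_{j<l}\mathbbm{1}_{B(x_j,r)}(x_l)\,d\mu(x_k)\cdots d\mu(x_1)$, which, after integrating out $x_k,x_{k-1},\dots$ successively, is essentially $\int_X\mu(B(x,r))^{k-1}d\mu(x)$ — the quantity defining $D_k(\mu)$. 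The function $\psi_p$ of hypothesis (H2) is designed precisely so that these partial integrals, when we instead keep track of the dependence on the dynamical times and decouple the orbits, appear as correlation integrals $\int\psi_p\cdot(\text{indicator})\circ T^{\text{gap}}\,d\mu$. So the real argument is: apply (H1) iteratively to each pair of consecutive times in the sorted tuple $i_1\le\cdots\le i_k$, picking up one error term $\|\psi_p\|_{\mathcal C}\,\|\mathbbm{1}_{B}\|_{\mathcal C}\,\theta_{\text{gap}}\le c^2 r^{-2\xi}a^{\text{gap}}$ per gap, and a main term that telescopes down to $\int_X\mu(B(x,r))^{k-1}d\mu(x)\asymp r^{(k-1)(D_k(\mu)\pm\eps)}$ by definition of the generalized dimension. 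Summing the main terms over all $n^k$ time-tuples gives roughly $n^k r^{(k-1)(D_k-\eps)}=n^k\cdot n^{-s(k-1)(D_k-\eps)}$; with $s=\frac{k}{(k-1)(D_k+\eps)}$ the exponent of $n$ is strictly negative for small $\eps$, hence summable (after thinning to a polynomial subsequence). One must separately handle tuples with small gaps (where $\theta_{\text{gap}}$ is not small): the number of time-tuples in which some gap is below a threshold $g(n)=C\log n$ is only $O(n^{k-1}\log^{k-1}n)$, and on the diagonal-type pieces one bounds the measure directly by $r^{(k-1)(D_k-\eps)}$ (using a genuine subset relation among the balls), which still sums; while for tuples with all gaps $\ge g(n)$ the accumulated error $n^k\cdot c^{2(k-1)}r^{-2(k-1)\xi}a^{g(n)}$ is made summable by choosing $C$ large relative to $s,\xi$. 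This is where the tightness hypothesis enters: to pass from "$m_n<r$" (an event about $\max_{i\ne j}d(x_i,x_j)$) to the product-of-balls form centered at a single point, and to control the overlap combinatorics, one covers balls of radius $2r$ by $N_0$ balls of radius $r$, so that the factor $N_0^{k}$ is harmless.

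The main obstacle, and the part requiring genuine care, is the iterated decorrelation step: the functions $\psi_p$ entangle several of the orbits (they already contain an integration over $x_1,\dots,x_{p-1}$), so one has to organize the $k$-fold integral as a sequence of one-step correlation estimates in the right order — roughly, sort the times, peel off the orbit with the largest time, recognize the remaining integral over the other $k-1$ orbits as $\int\psi_{k-1}\cdot\mathbbm1_{B(\cdot,r)}\circ T^{(\text{gap})}d\mu$ up to invariance, apply (H1), and iterate — all while tracking that every error term carries only $r^{-2\xi}$ and not a worse power, and that the leftover "main" integral at each stage is dominated by the one appearing in the definition of $\underline D_k$ or $\overline D_k$. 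Once the bound $\mu^k\{m_n<n^{-s}\}\le C\bigl(n^{k}n^{-s(k-1)(D_k+\eps)} + n^{k-1}(\log n)^{k-1}n^{-s(k-1)(D_k-\eps)} + n^{k}r^{-2(k-1)\xi}a^{C\log n}\bigr)$ is in hand, choosing $\eps$ small and $C$ large makes every term summable along $n_\ell=\ell^\beta$ for suitable $\beta$, Borel--Cantelli gives $m_{n_\ell}\ge n_\ell^{-s}$ eventually, monotonicity in $n$ bridges consecutive $n_\ell$, and letting $\eps\to0$ yields the lower bound and hence Theorem~\ref{thprinc}.
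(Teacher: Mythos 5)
Your plan has the right skeleton at the top level (thineq gives the limsup bound, a lower bound on the liminf remains), but the reduction you perform next is the wrong direction, and the rest of the argument inherits this error.

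You write that proving $\liminf \frac{\log m_n}{-\log n}\ge \frac{k}{(k-1)D_k}$ is ``equivalently'' showing $m_n\ge n^{-s}$ eventually, and you then set out to bound $\mu^k\{m_n<n^{-s}\}$ and sum it. But since $-\log n<0$, the inequality $m_n\ge n^{-s}$ rearranges to $\frac{\log m_n}{-\log n}\le s$: it yields an \emph{upper} bound on the \emph{limsup}, which is exactly what Theorem~\ref{thineq} already provides (and with a better constant). What is actually needed is the opposite event: one must show $m_n< r_n$ eventually for $r_n=e^{-k_n}$ with $k_n\sim \frac{k}{(k-1)\overline D_k+\eps}\log n$, i.e.\ one must bound $\mu^k\{m_n\ge r_n\}$ and show it is summable (along a subsequence). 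Equivalently, with the counting variable $S_n=\sum_{i_1,\dots,i_k}\prod_{j<l}\mathbbm 1_{B(T^{i_j}x_j,r_n)}(T^{i_l}x_l)$, one needs $\mu^k\{S_n=0\}$ to be small. This cannot be obtained from a union bound or the first moment of $S_n$: a large expectation does not preclude $S_n=0$. The correct tool is the \emph{second-moment method}: $\mu^k\{S_n=0\}\le \var(S_n)/\E(S_n)^2$ by Chebyshev, and this is precisely where (H1) and (H2) are used — not to decorrelate $\E(S_n)$, but to show $\E(S_n^2)\approx\E(S_n)^2$.

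This also explains why your ``iterated decorrelation'' step is misplaced. In $\E(S_n)$, the variables $x_1,\dots,x_k$ are independent under $\mu^k$ and $\mu$ is $T$-invariant, so $\E(S_n)=n^k\int_{X^k}\prod_{j<l}\mathbbm 1_{B(x_j,r_n)}(x_l)\,d\mu^k$ with no dynamics left and nothing to decorrelate — this is exactly the (mixing-free) proof of Theorem~\ref{thineq}, aided by Lemma~\ref{lemmagendim}. Decorrelation is needed when computing $\E(S_n^2)$: there one has \emph{two} time-tuples $(i_1,\dots,i_k)$ and $(i'_1,\dots,i'_k)$, and for each fixed coordinate $l$ the same random orbit $x_l$ appears at the two times $i_l$ and $i'_l$. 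One splits according to whether $|i_l-i'_l|$ is large (apply (H1) with the functions $\psi_p$ from (H2), paying $r^{-2\xi}\theta_g$) or small (there are only $O(n^{k-1}g)$ such tuples per coordinate, and one uses Cauchy–Schwarz plus the subadditivity/tightness Lemma~\ref{lema1} to bound them by fractional powers of $\E(S_n)/n^k$). None of this appears in your proposal: you have sorting of a single time-tuple $i_1\le\cdots\le i_k$, which is not the relevant combinatorics, and you never introduce the variance at all. To repair the proof you would need to reintroduce $S_n$, invoke Chebyshev rather than Markov, and carry out the decoupled second-moment estimate — which is exactly Theorem~\ref{dsup}, the missing half of the statement.
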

Now, we will apply this result to a short list of simple examples. Later, in Section~\ref{secexp}, we use this theorem for a more complex family of examples (multidimensional piecewise expanding maps).

\smallskip
Denote by $Leb$ the Lebesgue measure.
\begin{example}
Theorem~\ref{thprinc} can be applied to the following systems:
\begin{enumerate}
\item For $m\in\{2,3,\dots\}$, let $T: [0,1] \rightarrow [0,1]$ be such that $x\mapsto mx \mod 1$ and $\mu=Leb$.
\item Let $T:(0,1]\rightarrow(0,1]$ be such that $T(x)=2^n(x-2^{-n})$ for $x\in(2^{-n},2^{-n+1}]$ and $\mu=Leb$.
\item ($\beta$-transformations) For $\beta>1$, let $T: [0,1] \rightarrow [0,1]$ be such that $x\mapsto \beta x \mod 1$ and $\mu$ be the Parry measure (see  \cite{parry}), which is an absolutely continuous probability measure with density $\rho$ satisfying $1-\frac{1}{\beta}\leq\rho(x)\leq(1-\frac{1}{\beta})^{-1}$ for all $x\in[0,1]$. 
\item (Gauss map) Let $T:(0,1]\rightarrow(0,1]$ be such that $T(x)=\left\{\frac{1}{x}\right\}$ and $d\mu=\frac{1}{\log 2}\frac{dx}{1+x}$.
\end{enumerate}
In these examples it is easy to see that $D_k(\mu)=1$. Moreover, (H1) and (H2) are satisfied with the Banach space $\mathcal{C}=BV,$ the space of functions of bounded variation (see e.g. \cite{FFT} Section 4.1 and \cite{hof,philipp,philipp2}).
\end{example}

One can observe that Theorem \ref{thprinc} is an immediate consequence of Theorem \ref{thineq} and the next theorem.
\begin{theorem}\label{dsup}

Let $(X,\mathcal{A},\mu,T)$ be a measure preserving system, such that $\underline{D}_k(\mu)>0$ and such that $(X,d)$ is tight, satisfying (H1) and (H2). Then for $\mu^k$-almost every $(x_1,\dots,x_k)\in X^k$,
\[ \underset{n\rightarrow+\infty}{\underline\lim}\frac{\log m_n(x_1,\dots,x_k)}{-\log n}\geq\frac{k}{(k-1)\overline{D}_k(\mu)}.\]

\end{theorem}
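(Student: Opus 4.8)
The plan is to establish the equivalent statement that for every rational $\beta<\frac{k}{(k-1)\overline{D}_k(\mu)}$ one has, for $\mu^k$-almost every $(x_1,\dots,x_k)$, $m_n(x_1,\dots,x_k)\le n^{-\beta}$ for all $n$ large enough; intersecting the corresponding full-measure sets over a sequence of such $\beta$ increasing to $\frac{k}{(k-1)\overline{D}_k(\mu)}$ then gives $\underset{n\rightarrow+\infty}{\underline\lim}\frac{\log m_n}{-\log n}\geq\frac{k}{(k-1)\overline{D}_k(\mu)}$ (note $0<\overline{D}_k(\mu)<\infty$ since $(X,d)$ is finite dimensional and $\underline{D}_k(\mu)>0$). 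Introduce the counting function
\[
N_n(r)(x_1,\dots,x_k)=\card\Big\{(i_1,\dots,i_k)\in\{0,\dots,n-1\}^k:\ d\big(T^{i_1}x_1,\dots,T^{i_k}x_k\big)<r\Big\},
\]
so that $\{m_n<r\}=\{N_n(r)\ge1\}$; write $E_{\vec\imath}=\{d(T^{i_1}x_1,\dots,T^{i_k}x_k)<r\}$, $E_k(r)=\int_X\mu(B(x,r))^{k-1}\,d\mu(x)$, and let $\E[\,\cdot\,]$ denote integration against $\mu^k$. The argument is a second moment / Borel--Cantelli scheme run along a slowly growing subsequence, transferred to all $n$ by monotonicity of $n\mapsto m_n$.

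\emph{First moment.} By $T$-invariance of $\mu$ one has $\mu^k(E_{\vec\imath})\le E_k(r)$ for every $\vec\imath$, while the inclusion $\{x_a\in B(x_1,r/2)\text{ for all }a\}\subseteq E_{\vec\imath}$ (valid after the $T$-invariant change of variables) gives $\mu^k(E_{\vec\imath})\ge E_k(r/2)$; here tightness (Definition~\ref{deftight}) enters, letting one replace a ball $B(x,2r)$ by at most $N_0$ balls of radius $r$ and thereby compare $E_k(r)$ with $E_k(2r)$, so that $\E[N_n(r)]\asymp n^kE_k(r)$. By definition of the upper and lower generalized dimensions, for every $\eta>0$ and all small $r$,
\[
r^{(k-1)(\underline{D}_k(\mu)-\eta)}\ \ge\ E_k(r)\ \ge\ r^{(k-1)(\overline{D}_k(\mu)+\eta)}.
\]
Fix $\eta$ so small that $\beta<\frac{k}{(k-1)(\overline{D}_k(\mu)+\eta)}$ and set $r_n=n^{-\beta}$; then $\E[N_n(r_n)]\ge n^{\gamma}$ with $\gamma=k-\beta(k-1)(\overline{D}_k(\mu)+\eta)>0$ for all large $n$, and $\underline{D}_k(\mu)>0$ makes $E_k(r_n)$ a negative power of $n$, a fact used below.

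\emph{Second moment.} Expand $\E[N_n(r)^2]=\sum_{\vec\imath,\vec\imath'}\mu^k(E_{\vec\imath}\cap E_{\vec\imath'})$ and fix a gap $g=g(n)=\lceil K\log n\rceil$ with $K$ large. Classify a pair $(\vec\imath,\vec\imath')$ by the set $S=\{p:|i_p-i'_p|>g\}$. For $p\in S$ we decorrelate in $x_p$: integrating the variables out successively (first $x_1$, then $x_2$, \dots), at the $p$-th stage the integrand is, after a $T$-invariant relabelling of the remaining variables, of the form $\int_X\psi_p\cdot(\phi_p\circ T^{|i_p-i'_p|})\,d\mu$ with $\psi_p,\phi_p$ exactly of the types \eqref{psi1}--\eqref{psi2}; by (H2) these belong to $\mathcal C$ with $\|\psi_p\|_{\mathcal C},\|\phi_p\|_{\mathcal C}\le cr^{-\xi}$, so (H1) yields a factorisation with error $\le c^2r^{-2\xi}\theta_g=c^2r^{-2\xi}a^{g}$. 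Performing these $|S|$ decorrelations, the fully separated pairs ($S=\{1,\dots,k\}$) reconstruct $\mu^k(E_{\vec\imath})\mu^k(E_{\vec\imath'})\asymp E_k(r)^2$ up to a total error (over the at most $n^{2k}$ pairs) of size $\le Ck\,n^{2k}r^{-2\xi}a^{g}$, which is $\le n^{-2}$ for $K$ large, since $r_n$ is polynomial in $n^{-1}$ while $a^g=a^{K\log n}$ beats any polynomial; this cancels the subtracted $\E[N_n(r_n)]^2$ up to a summable remainder. For the near-diagonal pairs ($S\subsetneq\{1,\dots,k\}$) one decorrelates the coordinates of $S$ as above and bounds the rest by $\mu^k(E_{\vec\imath}\cap E_{\vec\imath'})\le\mu^k(E_{\vec\imath})$ together with Cauchy--Schwarz-type bounds for the moments $\int_X\mu(B(x,r))^{j}\,d\mu$; since there are at most $n^{k+|S|}(2g+1)^{k-|S|}$ such pairs and $\underline{D}_k(\mu)>0$ forces $E_k(r_n)$ and the relevant moments to be negative powers of $n$, their contribution is $\le C(\log n)^{c_k}\,\E[N_n(r_n)]$ plus lower order terms. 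Altogether $\var\big(N_n(r_n)\big)=o\big(\E[N_n(r_n)]^2\big)$, and Chebyshev's inequality yields
\[
\mu^k\big(N_n(r_n)=0\big)\ \le\ \frac{\var\big(N_n(r_n)\big)}{\E[N_n(r_n)]^2}\ \le\ C(\log n)^{c_k}n^{-\gamma}+n^{-2}.
\]

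\emph{Conclusion and main obstacle.} Choosing $n_\ell=\lceil\ell^{q}\rceil$ with $q>\max\{1/2,1/\gamma\}$, the right-hand side above is summable, so by the Borel--Cantelli lemma, for $\mu^k$-a.e. $(x_1,\dots,x_k)$ we have $m_{n_\ell}<r_{n_\ell}=n_\ell^{-\beta}$ for all large $\ell$. As $n\mapsto m_n$ is non-increasing and $\log n_{\ell+1}/\log n_\ell\to1$, for $n_\ell\le n<n_{\ell+1}$ we get $m_n\le m_{n_\ell}<n_\ell^{-\beta}\le n^{-\beta'}$ for every $\beta'<\beta$ once $\ell$ is large; since $\beta$ may be taken arbitrarily close to $\frac{k}{(k-1)\overline{D}_k(\mu)}$, a countable intersection of full-measure sets finishes the proof. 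The main difficulty is the second moment estimate: one must carry out the $k$ successive decorrelations while tracking exactly which of the $2k$ times $i_1,\dots,i_k,i'_1,\dots,i'_k$ are within $g$ of each other, check at each stage that the partially integrated functions are precisely those controlled by (H2), and show that the near-diagonal sums—where the orbits are genuinely linked—stay of lower order, which is exactly where $\underline{D}_k(\mu)>0$ is used. The remainder is a lengthy but routine adaptation of the $k=2$ treatments in \cite{BaLiRo,CoLaRo}.
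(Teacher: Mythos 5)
The overall strategy matches the paper's: a second-moment / Chebyshev argument on a counting statistic equivalent to the paper's $S_n$, decorrelating coordinate by coordinate via (H1)--(H2) when the time indices are far apart, and Borel--Cantelli along a polynomially growing subsequence, transferred to all $n$ by monotonicity of $m_n$. Your choice $r_n=n^{-\beta}$ with $\beta$ strictly below the critical exponent (rather than the paper's $r_n=e^{-k_n}$ with a $\log\log$ correction sitting exactly at the critical scale) is a legitimate and somewhat cleaner variant.

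However, there is a genuine gap in the near-diagonal second-moment estimate, and it is precisely the part you dismiss as a ``lengthy but routine adaptation.'' After decorrelating the far coordinates and applying Cauchy--Schwarz plus invariance, you are left with a quantity of the form
\[
\int_{X^{k-p}}\prod_{p<j<l\le k}\mathbbm{1}_{B(x_j,r)}(x_l)\left(\int_{X^p}\prod_{j\le p<l}\mathbbm{1}_{B(x_j,r)}(x_l)\,d\mu^p\right)^2 d\mu^{k-p}\ \le\ \int_X\mu\bigl(B(x,r)\bigr)^{p+k-1}\,d\mu(x),
\]
and to close the argument you need this $(p+k)$-th--moment quantity to be bounded above by $\bigl(\int_X\mu(B(x,r))^{k-1}\,d\mu\bigr)^{(p+k)/k}=\bigl(n^{-k}\E[N_n]\bigr)^{(p+k)/k}$. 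That is a reverse-Hölder / moment-interpolation inequality, and Hölder or Cauchy--Schwarz go in the \emph{wrong} direction: they give a lower, not an upper, bound on the higher moment. So ``Cauchy--Schwarz-type bounds for the moments $\int\mu(B(x,r))^{j}\,d\mu$'' cannot deliver the needed estimate. This is exactly the content of the paper's Lemma~\ref{lema1}, whose proof uses the tightness of $(X,d)$ via a $(\lambda,r)$-grid partition and then countable superadditivity of $x\mapsto x^{(p+k)/k}$ over the partition cells (the ball version of the cylinder calculation \eqref{ineqsubba} in the symbolic proof). You place the use of tightness only in the first moment, to compare $E_k(r)$ with $E_k(2r)$; in fact the essential use of tightness is in this second-moment interpolation. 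Without it, the best one can do on the near-diagonal pairs is the trivial bound $\mu^k(E_{\vec\imath}\cap E_{\vec\imath'})\le\mu^k(E_{\vec\imath})=n^{-k}\E[N_n]$, and then the class with exactly one close coordinate already contributes $\sim n^{k-1}(\log n)\,\E[N_n]$ to the variance; dividing by $\E[N_n]^2\sim n^{2\gamma}\cdot(\text{const})$ does not vanish once $\beta$ exceeds $1/[(k-1)\overline{D}_k(\mu)]$, which is a factor of $k$ short of the claimed exponent. (Your intermediate claim that the near-diagonal contribution is $\le C(\log n)^{c_k}\E[N_n]$ is also an understatement of what the correct method yields; the worst class gives order $(\log n)\,\E[N_n]^{(2k-1)/k}$, though this discrepancy alone would not derail the conclusion.) The proof can be repaired by inserting the paper's grid-partition moment inequality, but as written the key step is missing.
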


When the Banach space $\mathcal{C}$ is the space of H\"older functions $\mathcal{H}^\alpha(X,\R)$ we can adapt our proof and (H2) can be replaced by the following condition:

(HA) There exist $r_0>0$, $\xi\geq0$ and $\beta>0$ such that for $\mu$-almost every $x\in X$ and any $r_0>r>\rho>0$,
\[\mu(B(x,r+\rho)\backslash B(x,r-\rho))\leq r^{-\xi}\rho^\beta.\]
 
This assumption is satisfied, for example, if the measure is Lebesgue or absolutely continuous with respect to Lebesgue with a bounded density. 

\begin{theorem}\label{thdsuphold}
Let $(X,\mathcal{A},\mu,T)$ be a measure preserving system, such that $\underline{D}_k(\mu)>0$ and such that $(X,d)$ is tight, satisfying (H1) with $\mathcal{C}=\mathcal{H}^\alpha(X,\R)$ and (HA). Then for $\mu^k$-almost every $(x_1,\dots,x_k)\in X^k$,
\[ \underset{n\rightarrow+\infty}{\underline\lim}\frac{\log m_n(x_1,\dots,x_k)}{-\log n}\geq\frac{k}{(k-1)\overline{D}_k(\mu)}.\]
\end{theorem}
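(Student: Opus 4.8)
The plan is to adapt the proof of Theorem~\ref{dsup} to the H\"older setting, exploiting assumption (HA) to verify exactly the quantitative bound that (H2) provided in the general case. Recall that the lower bound $\underline\lim \frac{\log m_n}{-\log n}\geq \frac{k}{(k-1)\overline D_k(\mu)}$ is equivalent, by a Borel--Cantelli argument along a suitable subsequence $r=r_\ell\to 0$, to showing that for any $\eps>0$ the probability that $m_n \leq r$ for $n \approx r^{-\frac{(k-1)D_k(\mu)}{k}-\eps}$ is summable. Writing $s=\frac{(k-1)D_k(\mu)}{k}+\eps$ (so $n\sim r^{-s}$), one has
\[
\mu^k\bigl(m_n \leq r\bigr) \leq \sum_{i_1,\dots,i_k=0}^{n-1}\mu^k\bigl(d(T^{i_1}x_1,\dots,T^{i_k}x_k)\leq r\bigr),
\]
and after using invariance of $\mu$ and reducing by fixing the largest index, the typical term becomes $\int_{X^k}\prod_{j}\prod_{l>j}\mathbbm{1}_{B(x_j,r)}(x_l)\,d\mu^k$, which is precisely $\int_X \mu(B(x,r))^{k-1}d\mu(x)$ in the simplest reduction — so up to the combinatorial factor $n^k$ one needs this integral to decay like $r^{(k-1)D_k(\mu)}$, which holds by definition of the upper dimension (up to an $\eps$ loss). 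The decorrelation along different time blocks is controlled by (H1): splitting a correlation of the form $\int \psi_p \cdot (\text{shifted product}) \, d\mu$ one picks up an error $\|\psi_p\|_{\mathcal{C}}\|\cdot\|_{\mathcal{C}}\,\theta_{\text{gap}}$, and one chooses the time gap $\sim \log n$ large enough so that, even multiplied by the polynomial factor $r^{-2\xi}$ coming from the norm bounds, this error is negligible.

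The only place where (H2) was used, rather than (H1), is to control the $\mathcal{C}$-norm of the functions $\psi_p$ — indicator-type products of balls and their $\mu^{p-1}$-averages. In the H\"older setting the indicator $\mathbbm{1}_{B(x_j,r)}$ is not itself H\"older, so the standard trick is to replace it by a Lipschitz (or H\"older) cutoff $\tilde{\mathbbm{1}}_{B(x_j,r)}$ that equals $1$ on $B(x_j,r-\rho)$, vanishes outside $B(x_j,r+\rho)$, and has H\"older seminorm $\lesssim \rho^{-\alpha}$, for a small auxiliary scale $\rho=\rho(r)$ to be optimized (e.g. a small power of $r$). Then $\|\tilde\psi_p\|_{\mathcal{H}^\alpha}\lesssim \rho^{-\alpha'}$ for an explicit $\alpha'$ depending on $k$ and $\alpha$, giving exactly the $r^{-\xi}$-type control needed to rerun the (H1)-decorrelation. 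The price for smoothing is an annulus error: $\int \tilde\psi_p\,d\mu$ differs from $\int \psi_p\,d\mu$ by at most a sum of terms of the form $\mu(B(x,r+\rho)\setminus B(x,r-\rho))$, and this is exactly what (HA) bounds by $r^{-\xi}\rho^{\beta}$. Choosing $\rho$ a suitable small power of $r$ makes both this annulus error and the $\rho^{-\alpha'}$ norm blow-up sub-polynomial relative to the main term $r^{(k-1)D_k(\mu)}$, so the estimate closes with the same $\eps$-budget as before.

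Concretely, the steps are: (i) fix $\eps>0$, set $r_\ell$ a geometric sequence and $n_\ell \sim r_\ell^{-s}$ with $s=\frac{(k-1)D_k(\mu)}{k}+\eps$; (ii) bound $\mu^k(m_{n_\ell}\leq r_\ell)$ by the union bound over the $n_\ell^k$ index tuples, and for each tuple group the $k$ factors hierarchically, integrating out variables one at a time so that the generic integral to estimate is a perturbation of $\int_X \mu(B(x,r_\ell))^{k-1}d\mu$; (iii) at each integration step, replace indicators by H\"older cutoffs at scale $\rho_\ell = r_\ell^{\gamma}$ (with $\gamma$ chosen at the end), bounding the smoothing error via (HA) and the $\mathcal{H}^\alpha$-norm via $\rho_\ell^{-\alpha'}$; (iv) use (H1) to decorrelate blocks separated by a gap $g_\ell\sim C\log n_\ell$, absorbing the exponential decay $\theta_{g_\ell}=a^{g_\ell}$ against the polynomial factors $r_\ell^{-2\xi}$ and $\rho_\ell^{-2\alpha'}$ and against $n_\ell^k$; (v) assemble the bound $\mu^k(m_{n_\ell}\leq r_\ell)\lesssim n_\ell^k\,r_\ell^{(k-1)(D_k(\mu)-\eps')}+(\text{negligible})$ and check that, with $s$ as chosen, the exponent is strictly negative and the $r_\ell$-geometric series is summable; (vi) conclude by Borel--Cantelli that $m_{n_\ell}>r_\ell$ eventually, and pass from the subsequence $(n_\ell)$ to all $n$ by monotonicity of $m_n$ in $n$ together with the geometric spacing of $r_\ell$ and $n_\ell$, then let $\eps\to 0$.

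I expect the main obstacle to be bookkeeping the \emph{iterated} smoothing: because $\psi_p$ is itself built from a $\mu^{p-1}$-average of products of indicators, replacing every indicator by a H\"older bump and tracking how the H\"older seminorm of the resulting product/average grows with $p$ (and how the accumulated annulus errors from $p$ nested integrations combine) requires care to make sure the final norm exponent $\alpha'$ and the final error exponent stay compatible with a single choice of $\gamma$ and a single $\eps$-budget. Once the correct $\gamma$ is pinned down so that $\rho_\ell^{-\alpha'}\cdot a^{g_\ell}\to 0$ and $r_\ell^{-\xi}\rho_\ell^{\beta}$ is sub-polynomial in $r_\ell$, the rest is the same Borel--Cantelli-plus-monotonicity scheme as in the proof of Theorem~\ref{dsup}.
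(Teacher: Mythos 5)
Your plan goes wrong at the very outset: for a \emph{lower} bound on $\underline{\lim}\,\frac{\log m_n}{-\log n}$ you must show $m_n$ is eventually \emph{small} (recall $-\log n<0$, so $\frac{\log m_n}{-\log n}\ge c$ means $m_n\le n^{-c}$), i.e.\ you need $\sum_\ell \mu^k(m_{n_\ell}> r_\ell)<\infty$, \emph{not} $\sum_\ell\mu^k(m_{n_\ell}\le r_\ell)<\infty$. The union-bound/first-moment scheme you set up in step (ii) controls $\mu^k(m_n\le r)$ from above and, by Borel--Cantelli, would give $m_n>r_n$ eventually --- that is the argument for Theorem~\ref{thineq}, the complementary inequality. (Your step (v) also cannot close: with $n_\ell\sim r_\ell^{-s}$, $s=\frac{(k-1)D_k}{k}+\eps$, the bound $n_\ell^k r_\ell^{(k-1)(D_k-\eps')}=r_\ell^{-k\eps-(k-1)\eps'}$ blows up.) To prove Theorem~\ref{thdsuphold} you need the paper's second-moment route: set $S_n=\sum_{i_1,\dots,i_k}\prod_{j<l}\mathbbm{1}_{B(T^{i_j}x_j,r_n)}(T^{i_l}x_l)$, note $\{m_n\ge r_n\}=\{S_n=0\}$, and apply Chebyshev, $\mu^k(S_n=0)\le\var(S_n)/\E(S_n)^2$. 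Only inside the variance estimate do (H1) and the cutoff machinery enter, precisely to decorrelate far-apart index pairs.

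Even granting the second-moment setup, your plan to smooth \emph{every} indicator creates the ``iterated smoothing'' bookkeeping problem you flag at the end, and the paper sidesteps it rather than solving it. The crucial observation you are missing is Lemma~\ref{lemmaHA}: under (HA), the $\mu^{p-1}$-averaged factor
\[
\varphi(x_p)=\int_{X^{p-1}}\Bigl[\prod_{j=1}^{p-1}\prod_{l=j+1}^{k}\mathbbm{1}_{B(x_j,r)}(x_l)\Bigr]\,d\mu^{p-1}
\]
is \emph{already} H\"older in $x_p$ with $\|\varphi\|_{\mathcal{H}^\alpha}\le c\,r^{-\zeta}$, because $|\varphi(x)-\varphi(y)|\le(p-1)\int|\mathbbm{1}_{B(x,r)}-\mathbbm{1}_{B(y,r)}|\,d\mu$ and (HA) controls the symmetric-difference mass. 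So no smoothing is needed inside the $\mu^{p-1}$-integral at all. The only factor that must be mollified is the outer product $\prod_{l>q}\mathbbm{1}_{B(x_q,r_n)}(T^{i_l}x_l)$ in the single variable $x_q$ being decorrelated; one replaces it with $\varphi_{\cdot,r_n}(x_q)=\prod_{l>q}\eta_{r_n}(d(x_q,T^{i_l}x_l))$ for a one-sided Lipschitz bump $\mathbbm{1}_{[0,r_n]}\le\eta_{r_n}\le\mathbbm{1}_{[0,(1+\rho)r_n]}$, pays a single annulus error controlled by (HA), applies (H1) to the product $\Phi=\varphi\cdot g$ (H\"older with norm $\lesssim r_n^{-\zeta}+(\rho r_n)^{-1}$), and iterates one gap at a time. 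The optimization over $\rho$ is then a single-layer, explicit choice $\rho=n^{-\delta}$; no compounding of H\"older constants across nested integrations occurs. This is the structural ingredient your proposal would need in order to close.
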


For example, one can apply this theorem to expanding maps of the interval with a Gibbs measure associated to a H\"older potential (see e.g. \cite{saussol}) and $C^2$ endomorphism (of a $d$-dimensional compact Riemannian manifold) admitting a Young tower with exponential tail (see \cite[Section 6]{FFT10}  and \cite{collet}).


\section{Observed orbits and random dynamical systems}\label{secrandom}


In this section, we extend our analysis to the study of observation of orbits. Indeed, considering observations of systems (for example, temperature or pressure while studying climate) could be more significant than considering the whole system. From a more theoretical point of view, we will explain in Section~\ref{subsecrandom} how the study of observed orbits allows us to study random dynamical systems. 

Let $(Y,d)$ be a metric space and $f:X\rightarrow Y$ be a measurable function (called the observation). We denote by $f_*\mu$ the pushforward measure, defined by $f_*\mu(A)=\mu(f^{-1}(A))$ for measurable subsets $A\subset Y$.

We would like to study the behaviour of the shortest distance between $k$ observed orbits:
\[m_n^f(x_1,\dots,x_k)=\min_{i_1,\dots,i_k=0,\dots,n-1}\left(d(f(T^{i_1}x_1),\dots,f(T^{i_k}x_k))\right).\]

\begin{theorem}\label{thineqobs}

Let $(X,\mathcal{A},\mu,T)$ be a measure preserving system such that $\underline{D}_k(f_*\mu)>0$. Then for $\mu^k$-almost every $(x_1,\dots, x_k)\in X^k$,
\[ \underset{n\rightarrow+\infty}{\overline\lim}\frac{\log m_n^f(x_1,\dots,x_k)}{-\log n}\leq\frac{k}{(k-1)\underline{D}_k(f_*\mu)}.\]

\end{theorem}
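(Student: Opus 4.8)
The plan is to reduce this statement about observed orbits directly to Theorem \ref{thineq}. The key observation is that $m_n^f(x_1,\dots,x_k)$ is, by definition, nothing more than the shortest distance between $k$ orbits of the \emph{pushed-forward} system evaluated along the images $f(x_1),\dots,f(x_k)$. More precisely, since $f(T^i x) = $ (image of $x$ under the $i$-th iterate as seen through $f$), I would first want a measure-preserving system on $(Y, f_*\mu)$ whose orbits record exactly the sequences $(f(x), f(Tx), f(T^2x),\dots)$. The clean way to do this is the standard factor/observation construction: let $\Omega = Y^{\N}$ with the shift $\sigma$, let $\pi : X \to \Omega$ be $\pi(x) = (f(T^i x))_{i\ge 0}$, and let $\nu = \pi_* \mu$. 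Then $(\Omega, \sigma, \nu)$ is a measure preserving system, $\pi$ conjugates $T$ to $\sigma$, and the $0$-th coordinate projection $\Omega \to Y$ pushes $\nu$ to $f_*\mu$.

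The next step is to check that the shortest-distance quantity is intrinsic to this factor. Equip $\Omega$ with a metric that only reads the $0$-th coordinate up to the distance on $Y$, e.g.\ $D(\omega,\omega') = d(\omega_0, \omega'_0)$ (a pseudmetric; if one insists on a genuine metric one adds a rapidly decaying tail in the later coordinates and notes it does not affect the $\limsup$ of logarithms — but the cleanest route is to observe that only the $0$-coordinate distance enters $m_n$, so working with the pseudometric, or equivalently pulling everything back to $(Y,f_*\mu)$ directly, causes no loss). Under this identification one has, for $\nu^k$-a.e.\ $(\omega^1,\dots,\omega^k)$,
\[
m_n(\omega^1,\dots,\omega^k) = \min_{i_1,\dots,i_k = 0,\dots,n-1} d\big(f(T^{i_1}x_1),\dots,f(T^{i_k}x_k)\big) = m_n^f(x_1,\dots,x_k)
\]
whenever $\omega^j = \pi(x_j)$; and since $\pi_*\mu = \nu$, the law of $m_n^f$ under $\mu^k$ equals the law of $m_n$ under $\nu^k$. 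Moreover the ball $B(\omega_0,r)$ in $Y$ has $f_*\mu$-measure equal to the measure of the corresponding cylinder in $\Omega$, so $\underline D_k$ computed for the factor system with its $0$-coordinate metric coincides with $\underline D_k(f_*\mu)$; in particular the hypothesis $\underline D_k(f_*\mu) > 0$ is exactly the hypothesis needed to apply Theorem \ref{thineq} to the factor. Applying Theorem \ref{thineq} there gives $\limsup_n \frac{\log m_n(\omega^1,\dots,\omega^k)}{-\log n} \le \frac{k}{(k-1)\underline D_k(f_*\mu)}$ for $\nu^k$-a.e.\ point, and transporting this back along $\pi$ yields the claimed bound for $\mu^k$-a.e.\ $(x_1,\dots,x_k)$.

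The only genuinely delicate point, and where I would be most careful, is the metric-space hypothesis: Theorem \ref{thineq} is stated for a \emph{finite dimensional metric space} $(X,d)$, whereas $(Y,d)$ here is just ``a metric space,'' and the shift space $\Omega$ with a tail-metric need not be finite dimensional. Inspecting the proof of Theorem \ref{thineq} (which I may assume), the role of finite dimensionality is only to control covering numbers of small balls at a polynomial rate, and this is used through the measure $f_*\mu$ on $Y$, not through $Y$ itself; so the honest thing is either (i) to restate the argument directly on $(Y, f_*\mu)$ — repeating the Borel–Cantelli estimate of Theorem \ref{thineq} verbatim, now integrating over $Y^k$ against $(f_*\mu)^k$ and using only that $\underline D_k(f_*\mu)>0$ — or (ii) to assume, as is implicit, that $(Y,d)$ is finite dimensional and invoke Theorem \ref{thineq} on the factor as above. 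I would adopt (i): re-run the proof of Theorem \ref{thineq}. Fix $\eps>0$, set $r_n = n^{-k/((k-1)(\underline D_k(f_*\mu)-\eps))}$ say, and estimate
\[
\mu^k\big(m_n^f(x_1,\dots,x_k) < r_n\big) \le n^k \,(f_*\mu)\otimes(f_*\mu)\big\{(y_1,\dots,y_k): d(y_i,y_j)<r_n \ \forall i\ne j\big\} \le n^k \int_Y (f_*\mu)(B(y,2 r_n))^{k-1}\, d(f_*\mu)(y),
\]
using $T$-invariance of $\mu$ (so each $f(T^i x_j)$ has law $f_*\mu$) and a union bound over the $n^k$ choices of $(i_1,\dots,i_k)$. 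By definition of $\underline D_k(f_*\mu)$ the integral is, along a subsequence and up to sub-polynomial factors, at most $r_n^{(k-1)(\underline D_k(f_*\mu)-\eps)}$, which by the choice of $r_n$ makes the whole bound summable; Borel–Cantelli then gives $m_n^f \ge r_n$ eventually a.s., hence $\limsup \log m_n^f / (-\log n) \le \frac{k}{(k-1)(\underline D_k(f_*\mu)-\eps)}$, and letting $\eps \to 0$ finishes it. This is word-for-word the proof of Theorem \ref{thineq} with $\mu$ replaced by $f_*\mu$ and each coordinate orbit replaced by its $f$-image, so the main obstacle is purely bookkeeping: making sure the union bound and the invariance are applied to the right marginals.
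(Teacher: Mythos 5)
Your ``option (i)'' --- re-running the proof of Theorem~\ref{thineq} on $(Y,f_*\mu)$, using $T$-invariance of $\mu$ together with a union bound over the $n^k$ index tuples so that each term reduces to a single $(f_*\mu)^{\otimes k}$-probability --- is exactly the paper's (implicit) approach. The paper never writes out a separate proof of Theorem~\ref{thineqobs}; in the proof of Theorem~\ref{thprincobs} it says to repeat the arguments of Theorems~\ref{thineq} and~\ref{thdsuphold} with $S_n$ replaced by $S_n^f(x_1,\dots,x_k)=\sum_{i_1,\dots,i_k}\prod_{j<l}\mathbbm{1}_{f^{-1}B(f(T^{i_j}x_j),r_n)}(T^{i_l}x_l)$, which is precisely your argument once you unwind $\mathbb{E}(S_n^f)$. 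Your detour through a pseudometric on $Y^{\mathbb{N}}$ is unnecessary and you were right to abandon it; and your worry about finite-dimensionality is a red herring for this particular theorem, since the proof of Theorem~\ref{thineq} never uses that hypothesis.

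There is, however, a genuine slip in the Borel--Cantelli step. With $r_n=n^{-k/((k-1)(\underline{D}_k(f_*\mu)-\eps))}$ and the bound $\int_Y (f_*\mu)(B(y,2r_n))^{k-1}\,d(f_*\mu)(y)\leq r_n^{(k-1)(\underline{D}_k(f_*\mu)-\eps)}$ you use exactly matching exponents, so $n^k r_n^{(k-1)(\underline{D}_k(f_*\mu)-\eps)}$ is identically equal to $1$: there is no slack, nothing is summable, and Borel--Cantelli gives nothing. (Note also that the $\liminf$ defining $\underline{D}_k$ yields that integral bound for \emph{all} sufficiently small $r$, not merely ``along a subsequence'' as you write.) The fix is to build slack into $r_n$, which the paper does by taking $k_n=\frac{1}{(k-1)\underline{D}_k(f_*\mu)-\eps}\bigl(k\log n+\log\log n\bigr)$ and $r_n=e^{-k_n}$; this yields the bound $1/\log n$, which is still not summable in $n$, so one further passes to the subsequence $n_\ell=\lceil e^{\ell^2}\rceil$, applies Borel--Cantelli there, and interpolates to all $n$ using the monotonicity of $n\mapsto m_n^f$ together with $\log n_\ell/\log n_{\ell+1}\to1$. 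With that correction (or an equivalent one obtained by separating the $\eps$'s in the two exponents) your argument closes and coincides with the paper's.
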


We will assume that $f$ is Lipschitz and as in Section~\ref{secshort}, we  prove that the equality holds under some rapidly mixing conditions: 

(H1') For all $\psi,\ \phi\in \mathcal{H}^\alpha(Y,\R)$ and for all $n\in\N^*$,  we have
\[\left|\int_X\psi(f(x)).\phi (f(T^nx))\, d\mu(x)-\int_X \psi(f(x)) d\mu(x)\int_X\phi(f(x)) d \mu(x)\right|\leq\|\psi\circ f\|_{\mathcal{H}^\alpha}\|\phi\circ f\|_{\mathcal{H}^\alpha}\theta_n,\]
with $\theta_n =a^{n}$ ($0\leq a<1$).

For simplicity, we only treat the case when the mixing property is satisfied for H\"older observables. However, we observe that on can adapt (H1) and (H2) to this setting to work with other Banach spaces.

Now we can state our version of Theorem \ref{thdsuphold} for observed orbits.
\begin{theorem}\label{thprincobs}
Let $(X,\mathcal{A},\mu,T)$ be a measure preserving system and $f$ a Lipschitz observation, such that $\underline{D}_k(f_*\mu)>0$ and such that $(Y,d)$ is tight, satisfying (H1') and such that $f_*\mu$ satisfies (HA). Then for $\mu^k$-almost every $(x_1,\dots,x_k)\in X^k$,
\[ \underset{n\rightarrow+\infty}{\underline\lim}\frac{\log m_n^f(x_1,\dots,x_k)}{-\log n}\geq\frac{k}{(k-1)\overline{D}_k(f_*\mu)}.\]
Moreover, if ${D}_k(f_*\mu)$ exists, then for $\mu^k$-almost every $(x_1,\dots,x_k)\in X^k$,
\[ \underset{n\rightarrow+\infty}{\lim}\frac{\log m_n^f(x_1,\dots,x_k)}{-\log n}=\frac{k}{(k-1){D}_k(f_*\mu)}.\]

\end{theorem}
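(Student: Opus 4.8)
The plan is to reduce everything to the already-established unobserved results by viewing the observed system as a measure preserving system on the target space. First I would note that the upper bound
\[
\underset{n\rightarrow+\infty}{\overline\lim}\frac{\log m_n^f(x_1,\dots,x_k)}{-\log n}\leq\frac{k}{(k-1)\underline{D}_k(f_*\mu)}
\]
is exactly Theorem~\ref{thineqobs}, which is assumed. So the content is the lower bound, i.e. the analogue of Theorem~\ref{thdsuphold} in the observed setting; once that is proved, combining the two inequalities with the hypothesis that $D_k(f_*\mu)$ exists gives the final limit statement. Hence I would concentrate on establishing
\[
\underset{n\rightarrow+\infty}{\underline\lim}\frac{\log m_n^f(x_1,\dots,x_k)}{-\log n}\geq\frac{k}{(k-1)\overline{D}_k(f_*\mu)}.
\]

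The key observation is that $m_n^f(x_1,\dots,x_k)$ depends on $(x_1,\dots,x_k)$ only through $(f(x_1),\dots,f(x_k))$, and more precisely through the orbits $(T^i x_j)$ only via $(f(T^i x_j))$. I would like to push the dynamics onto $Y$, but $T$ need not factor through $f$. Instead, the device used for $k=2$ in \cite{CoLaRo} is to work directly on $X^k$ with the functions $\psi_p$ of (H2) replaced by their ``observed'' versions: in the definitions \eqref{psi1} and \eqref{psi2} one replaces each indicator $\mathbbm{1}_{B(x_l,r)}(\cdot)$ by $\mathbbm{1}_{B(f(x_l),r)}(f(\cdot))=\mathbbm{1}_{f^{-1}B(f(x_l),r)}(\cdot)$. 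Since $f$ is Lipschitz, say with constant $L$, one has $f^{-1}B(f(x_l),r)\supset B(x_l,r/L)$, but the real point is the opposite bookkeeping: the relevant correlation sums are integrals of products of $\mathbbm{1}_{B(f(x_l),r)}\circ f$, and these are exactly the quantities controlled by (H1'). Concretely, the proof of Theorem~\ref{thdsuphold} proceeds by a second-moment / Borel--Cantelli argument along a subsequence $r=r_k\to 0$: one shows that the measure (in $X^k$) of the set of $(x_1,\dots,x_k)$ for which $m_n^f > r$ fails to hold for $n$ of order $r^{-(k-1)D_k/k}$ (up to $\eps$) is summable, using (H1') to decorrelate the $k$ orbits and replace the joint probability of a near-collision by (roughly) $\big(\int \mu(f^{-1}B(y,r))\,d f_*\mu(y)\big)$-type quantities, i.e. $\int_Y f_*\mu(B(y,r))^{k-1}\,d f_*\mu(y)$, whose logarithmic rate is by definition governed by $\overline D_k(f_*\mu)$. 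The Lipschitz hypothesis on $f$ enters when one needs the covering/tightness argument on $Y$: to go from ``the $k$-tuple of observed points lies in a common small ball'' back to estimates on individual indicator functions and their $\mathcal{H}^\alpha$ norms, one uses that $\mathbbm{1}_{B(y,r)}\circ f$, though not Hölder, can be sandwiched between Hölder functions supported on slightly larger and slightly smaller preimage sets, and the error introduced is controlled by (HA) applied to $f_*\mu$ (this is precisely why (HA) is imposed on $f_*\mu$ rather than (H2) on $X$). So I would: (i) set up the observed $\psi_p$'s and check they are amenable to (H1') after the Hölder sandwiching, with norm bounds $\le c r^{-\xi}$ coming from (HA) on $f_*\mu$ and the Lipschitz bound on $f$; (ii) run verbatim the decorrelation-plus-Borel--Cantelli scheme of the proof of Theorem~\ref{thdsuphold}, with $\mu(B(\cdot,r))$ systematically replaced by $f_*\mu(B(\cdot,r))$ and $X$ by $Y$ in all covering arguments; (iii) conclude the lower bound, and then combine with Theorem~\ref{thineqobs} and existence of $D_k(f_*\mu)$ to get the equality.

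The main obstacle I expect is step (i): the observable $\mathbbm{1}_{B(f(x_l),r)}\circ f$ is genuinely not in $\mathcal{H}^\alpha(X,\R)$, so one cannot apply (H1') to it directly, and the whole reduction hinges on the Hölder approximation being quantitatively good enough. Making this precise requires choosing mollified versions $\psi_p^{\pm}$ with $\psi_p^{-}\le \psi_p\le \psi_p^{+}$, Hölder norms of order $r^{-\xi'}$ for some controlled $\xi'$, and $\int(\psi_p^{+}-\psi_p^{-})\,d\mu$ small enough relative to $\int\psi_p\,d\mu$ — and it is exactly here that (HA) on $f_*\mu$ (bounding the $f_*\mu$-measure of thin annuli $B(y,r+\rho)\setminus B(y,r-\rho)$ by $r^{-\xi}\rho^\beta$) is both necessary and, together with Lipschitz-ness of $f$, sufficient. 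Everything downstream of this — the variance computation, the choice of the subsequence $r_k$, the Borel--Cantelli summability, and the tightness-based covering of $Y$ — is a routine transcription of the proof of Theorem~\ref{thdsuphold}, so I would keep that part terse and refer back to Section~\ref{sec-proof}.
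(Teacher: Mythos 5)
Your proposal matches the paper's approach: the paper also proves the lower bound by transcribing the proofs of Theorems~\ref{thineq} and~\ref{thdsuphold}, replacing $S_n$ by the observed version built from $\mathbbm{1}_{f^{-1}B(f(T^{i_j}x_j),r_n)}$, using (H1') in place of (H1), carrying out the Lipschitz mollification with $\varphi^f_{x_{q+1},\dots,x_k,r_n}(x)=\prod_{l=q+1}^{k}\eta_{r_n}\bigl(d(f(x),f(x_l))\bigr)$ (which is $\tfrac{L(k-q)}{\rho r_n}$-Lipschitz since $f$ is $L$-Lipschitz), controlling the mollification error via (HA) for $f_*\mu$, and running the covering/tightness argument on $Y$. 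The upper bound and final equality are obtained, as you say, from Theorem~\ref{thineqobs} and existence of $D_k(f_*\mu)$.
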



\subsection{Shortest distance between multiple random orbits}\label{subsecrandom}
In this subsection, we will use the previous results to study the shortest distance between multiple orbits of a random dynamical system.

Let $(X,d)$ be a tight metric space and let $(\Omega, \theta, \mathbb{P})$ be a probability measure preserving system, where $\Omega$ is a metric space and $B(\Omega)$ its Borelian $\sigma$-algebra. 

\begin{definition}
A random dynamical system $\mathcal{T}=(T_{\omega})_{\omega \in \Omega}$ on $X$ over $(\Omega, B(\Omega), \mathbb{P}, \theta)$ is generated by maps $T_\omega$ such that $(\omega,x) \mapsto T_\omega(x)$ is measurable and satisfies:
$$T_\omega^0=Id \ \mbox{for all} \ \omega \in \Omega,$$
$$T_\omega^n= T_{\theta^{n-1}(\omega)}\circ \cdots \circ T_{\theta(\omega)} \circ T_\omega \ \mbox{for all} \ n \geq 1.$$
The map $S: \Omega \times X \to \Omega \times X$ defined by $S(\omega, x)=(\theta(\omega),T_\omega(x))$ is the dynamics of the random dynamical systems generated by $\mathcal{T}$ and is called skew-product.

A probability measure $\mu$ is said to be an invariant measure for the random dynamical system $\mathcal{T}$ if it satisfies

\begin{itemize}
\item [1.] $\mu$ is $S$-invariant
\item [2.] $\pi_*\mu= \mathbb{P}$
\end{itemize}
where $\pi: \Omega \times X \to \Omega$ is the canonical projection.

Let $(\mu_\omega)_\omega$ denote the decomposition of $\mu$ on $X$, that is, $d\mu(\omega,x)=d\mu_\omega(x)d\mathbb{P}(\omega)$. We denote by $\nu=\int \mu_\omega d\mathbb{P}$ the marginal of $\nu$ on $X$.
\end{definition}

For $(\omega_1,x_1), \dots,(\omega_k,x_k)$, we define the shortest distance between $k$ random orbits by
$$m_n^{\omega_1,\dots,\omega_k}(x_1,\dots,x_k) =  \min_{i_1,\dots,i_k=0,\dots,n-1} \lt(d\lt(T_{\omega_1}^{i_1}(x_1),\dots,T_{{\omega_k}}^{i_k}({x_k})\rt)\rt).$$
\begin{remark}
We observe that the technic developed here only allows us to obtain annealed results. Another object worth studying would be the quenched shortest distance
$$m_n^{\omega}(x_1,\dots,x_k) =  \min_{i_1,\dots,i_k=0,\dots,n-1} \lt(d\lt(T_{\omega}^{i_1}(x_1),\dots,T_{{\omega}}^{i_k}({x_k})\rt)\rt).$$
In this direction, the only known results are for $2$ orbits and when the system is a random subshift of finite type \cite{LCS-random}.
\end{remark}

As in the deterministic case, we will assume an exponential decay of correlations for the random dynamical system:

(H1R) (Annealed decay of correlations) For every $n \in \mathbb{N}^*$, and every $\psi$, $\phi \in\mathcal{H}^\alpha(X,\R)$,
$$\lt|\int_{\Omega \times X} \psi (T^n_\omega(x))\phi(x) \ d\mu(\omega,x) - \int_{\Omega \times X} \psi \ d\mu \int_{\Omega \times X} \phi \ d\mu \rt| \leq \|\psi \|_{\mathcal{H}^\alpha}\|\phi \|_{\mathcal{H}^\alpha} \theta_n,$$
with $\theta_n=a^{n}$ ($0\leq a<1$).

\begin{theorem}\label{theoremrandom}
Let $\mathcal{T}$ be a random dynamical system on $X$ over $(\Omega, B(\Omega), \mathbb{P}, \theta)$ with an invariant measure $\mu$ such that $\underline{D}_k({\nu})>0$. Then for $\mu^k$-almost every $(\omega_1, x_1,\dots,{\omega_k},{x_k}) \in (\Omega \times X)^k,$
$$
\underset{n \rightarrow \infty}{\overline{\lim}}\frac{\log m_n^{\omega_1,\dots,\omega_k}(x_1,\dots,x_k)}{-\log n} \leq \frac{k}{(k-1)\underline{D}_k({\nu})}. \
$$
Moreover, if the random dynamical system satisfies assumptions $(H1R)$ and $\nu$ satisfies (HA), then
$$
\underset{n \rightarrow \infty}{\underline{\lim}}\frac{\log m_n^{\omega_1,\dots,\omega_k}(x_1,\dots,x_k)}{-\log n} \geq \frac{k}{(k-1)\overline{D}_k({\nu})},
$$
and if ${D}_k({\nu})$ exists, then
\begin{equation*}
  \underset{n \rightarrow \infty}{\lim}\frac{\log m_n^{\omega_1,\dots,\omega_k}(x_1,\dots,x_k)}{-\log n} = \frac{k}{(k-1)D_k({\nu})} \ .
\end{equation*}

\end{theorem}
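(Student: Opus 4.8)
The plan is to deduce Theorem~\ref{theoremrandom} from the results on observed orbits (Theorems~\ref{thineqobs} and \ref{thprincobs}) via the standard trick of encoding a random dynamical system as a deterministic skew-product together with a suitable observation. Concretely, one works with the deterministic measure preserving system $(\Omega\times X,\, S,\, \mu)$, where $S(\omega,x)=(\theta\omega,T_\omega x)$, and takes as observation the canonical projection $f=\pi_X:\Omega\times X\to X$, which is Lipschitz (with the product metric on $\Omega\times X$, e.g. $d((\omega,x),(\omega',x'))=\max(d_\Omega(\omega,\omega'),d_X(x,x'))$, one has $\mathrm{Lip}(\pi_X)\le 1$). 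The key observation is that $f\circ S^{i}(\omega,x)=T_\omega^{i}(x)$, so that the shortest distance between the $k$ observed orbits of the points $(\omega_1,x_1),\dots,(\omega_k,x_k)$ under $S$ is exactly $m_n^{\omega_1,\dots,\omega_k}(x_1,\dots,x_k)$. Moreover $f_*\mu=\nu$ by the definition of the marginal, so all the hypotheses and conclusions of the observed-orbit theorems translate verbatim into statements about $\nu$.

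First I would record the metric setup on $\Omega\times X$, check that $\pi_X$ is Lipschitz, and verify the identity $m_n^{f}((\omega_1,x_1),\dots,(\omega_k,x_k))=m_n^{\omega_1,\dots,\omega_k}(x_1,\dots,x_k)$ together with $f_*\mu=\nu$. Next, the upper bound: since $\underline{D}_k(\nu)=\underline{D}_k(f_*\mu)>0$ by assumption, Theorem~\ref{thineqobs} applied to $(\Omega\times X,\mathcal A\otimes B(\Omega),\mu,S)$ and the observation $f$ gives immediately, for $\mu^k$-almost every $(\omega_1,x_1,\dots,\omega_k,x_k)$,
\[
\underset{n\to\infty}{\overline\lim}\frac{\log m_n^{\omega_1,\dots,\omega_k}(x_1,\dots,x_k)}{-\log n}\le\frac{k}{(k-1)\underline{D}_k(\nu)}.
\]
This requires no mixing hypothesis, exactly as in the deterministic statement.

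For the lower bound I would invoke Theorem~\ref{thprincobs}. The hypothesis there that $f_*\mu$ satisfies (HA) becomes the assumed condition that $\nu$ satisfies (HA); that $(Y,d)=(X,d)$ is tight is assumed; and the remaining ingredient is (H1'), the exponential annealed decay of correlations for H\"older observables of the observed system. Here is where I expect the only real (though minor) obstacle: one must check that (H1R) implies (H1') for the pair $(S,f)$. Unwinding definitions, for $\psi,\phi\in\mathcal{H}^\alpha(X,\R)$ one has $\int_{\Omega\times X}\psi(f(S^n(\omega,x)))\phi(f(\omega,x))\,d\mu=\int_{\Omega\times X}\psi(T_\omega^n x)\phi(x)\,d\mu$, and $\int\psi(f)\,d\mu=\int\psi\,d\nu=\int\psi\circ\pi_X\,d\mu$, so (H1R) gives exactly the bound in (H1') with the correlation controlled by $\|\psi\|_{\mathcal{H}^\alpha}\|\phi\|_{\mathcal{H}^\alpha}\theta_n$; one just needs $\|\psi\circ f\|_{\mathcal{H}^\alpha(\Omega\times X)}\le\|\psi\|_{\mathcal{H}^\alpha(X)}$, which holds since $f=\pi_X$ is $1$-Lipschitz and hence does not increase H\"older norms. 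With (H1') and (HA) for $\nu$ in hand, Theorem~\ref{thprincobs} yields
\[
\underset{n\to\infty}{\underline\lim}\frac{\log m_n^{\omega_1,\dots,\omega_k}(x_1,\dots,x_k)}{-\log n}\ge\frac{k}{(k-1)\overline{D}_k(\nu)}
\]
for $\mu^k$-a.e.\ point, and combining the two bounds gives the limit $\frac{k}{(k-1)D_k(\nu)}$ whenever $D_k(\nu)$ exists. The whole argument is thus a translation/reduction; the substance lies entirely in the already-proved observed-orbit theorems, and the care needed is only in setting up the product metric so that $\pi_X$ is Lipschitz and the decay-of-correlations hypotheses match up.
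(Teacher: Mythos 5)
Your proposal follows exactly the paper's approach: the paper's own proof is a one-line reduction, ``apply Theorem~\ref{thineqobs} and Theorem~\ref{thprincobs} to $(\Omega\times X, B(\Omega\times X),\mu,S)$ with the observation $f(\omega,x)=x$,'' which is precisely the plan you lay out and flesh out. One small remark: to pass from (H1R) to (H1') as stated you actually want $\|\psi\|_{\mathcal{H}^\alpha(X)}\le\|\psi\circ\pi_X\|_{\mathcal{H}^\alpha(\Omega\times X)}$ rather than the reverse inequality you wrote; for the projection with the max product metric the two norms are in fact equal (fix the $\omega$-coordinate to see $\ge$, use $1$-Lipschitzness for $\le$), so this does not affect the correctness of the argument.
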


\begin{proof}
Following the ideas in \cite{Rousseau}, it is enough to apply Theorem \ref{thineqobs} and Theorem \ref{thprincobs} for the dynamical system $(\Omega \times X, B(\Omega \times X), \mu, S)$ with the observation $f$ defined by
\begin{eqnarray*}
&& f: \Omega \times X \to X \\
&& \ \ \ \ \ \ (\omega,x) \mapsto x.
\end{eqnarray*}
\end{proof}

We now apply  the above result to some simple non-i.i.d. random dynamical system and we observe that, as in \cite{CoLaRo}, Theorem~\ref{theoremrandom} could also be applied to randomly perturbed dynamical systems and random hyperbolic toral automorphisms.

\begin{example}[Non-i.i.d. random expanding maps] 

Consider the two following linear maps
\begin{eqnarray*}
&& T_1:X \to X      \quad \mbox{and} \quad  T_2: X \to X \\
&& \ \qquad x \mapsto 2x \hspace{2.1cm} x \mapsto 3x,
\end{eqnarray*}
where $X$  is the  one-dimensional torus $ \mathbb{T}^1$. It is easy to see that $T_1$ and $T_2$ preserve the Lebesgue measure($Leb$).

The following skew product gives the dynamics of the random dynamical system:
\begin{eqnarray*}
&& S: \Omega \times X \to \Omega \times X \\
&& \ \ \ \ \ \ (\omega,x) \mapsto (\theta(\omega), T_\omega (x)),
\end{eqnarray*}
with $\Omega=[0,1], \ T_\omega=T_1$ if $\omega \in [0,2/5)$ and $T_\omega=T_2$ if $\omega \in [2/5,1]$ where $\omega$ is the following piecewise linear map:

$$\theta(\omega)=\left\{\begin{array}{ll}
2\omega &  \text{ if } \, \omega\in [0,1/5) \\
3\omega-1/5 &  \text{ if } \, \omega \in [1/5,2/5) \\
2\omega-4/5 &  \text{ if }\, \omega \in [2/5,3/5) \\
3\omega/2-1/2 &  \text{ if } \,\omega \in [3/5,1].
\end{array}\right.$$

The associated skew-product $S$ is $Leb \otimes Leb$-invariant. It is easy to check that Lebesgue measure satisfies (HA). Moreover, by \cite{Baladi} the skew product $S$ has an exponential decay of correlations. Since in this example $\nu=Leb$, we have $D_k(\nu)=1$ and Theorem \ref{theoremrandom} implies that for $Leb^{2k}$-almost every $(\omega_1, x_1,\dots,{\omega_k},{x_k}) \in ([0,1]\times  \mathbb{T}^1)^k $,

$$  \underset{n \rightarrow \infty}{\lim}\frac{\log m_n^{\omega_1,\dots,\omega_k}(x_1,\dots,x_k)}{-\log n} = \frac{k}{k-1}.$$

\end{example}

\section{Longest common substring between $k$ random sequences}\label{sec-lcs}
It was shown in \cite{BaLiRo} that studying the shortest distance between orbits for a symbolic dynamical system coincides with studying the length of the longest common substring between sequences.

Thus we will consider the symbolic dynamical systems $(\Omega,\P,\sigma)$, where $\Omega=\mathcal{A}^\N$ for some alphabet $\mathcal{A}$, $\sigma$ is the (left) shift on $\Omega$ and  $\P$ is a $\sigma$-invariant probability measure. For $k$ sequences $x^1,\dots, x^k\in\Omega$, we are interested in the behaviour of
\begin{align*}
M_n(&x^1,..., x^k)\\
&=\max\{m:x^1_{i_1+j}=...=x^k_{i_k+j}\textrm{ for $j=0,...,m-1$ and for some $0\leq i_1,...,i_k\leq n-m$}\}.
\end{align*}
We will show that the behaviour of $M_n$ is linked with the generalized R\'enyi entropy of the system. 

 For $y\in \Omega$ we denote by $C_n(y)=\{z \in \Omega :z_i = y_i\text{ for all } 
0\le i\le n-1\} $ the  \emph{$n$-cylinder} containing $y$. Set $\F_0^n$ as the sigma-algebra over $\Omega$ 
generated by all $n$-cylinders.

For $k>1$, we recall the definition of the lower and upper generalized R\'enyi entropy:
\[\underline{H}_k(\P)=\underset{n\rightarrow+\infty}{\underline\lim}\frac{\log\sum \P(C_n)^k}{-(k-1)n}\qquad\textrm{and}\qquad\overline{H}_k(\P)=\underset{n\rightarrow+\infty}{\overline\lim}\frac{\log\sum \P(C_n)^k}{-(k-1)n},\]
where the notation $\sum \P(C_n)^k$ means  $\underset{y\in \mathcal{A}^n}{\sum} \P(C_n(y))^k$. When the limit exists, we will denote it by $H_k(\P)$.

We say that a system $(\Omega,\P,\sigma)$ is {\it $\alpha$-mixing} if there exists
a function $\alpha:\N \rightarrow\R$ satisfying $\alpha(g)\to0$ when $g\to+\infty$ and such that
for all $m,n \in \N$, $A\in\F_0^n$ and $B\in \F_0^{m}$:
\[
\left|\P(A\cap\sigma^{-g-n}B) -\P(A)\P(B)\right|\le \alpha(g).
\]
It is said to be  {\it $\alpha$-mixing with an exponential decay} if the function $\alpha(g)$ decreases exponentially fast to $0$.

We say that our system is {\it $\psi$-mixing} if there exists
a function $\psi:\N \rightarrow\R$ satisfying $\psi(g)\to 0$ when $g\to+\infty$ and such that
for all $m,n \in \N$, $A\in\F_0^n$ and $B\in \F_0^{m}$:
\[
\left|\P(A\cap\sigma^{-g-n}B) -\P(A)\P(B)\right|\le \psi(g)\P(A)\P(B).
\]
Now we are ready to state our next result.

\begin{theorem}\label{seqmat}

If $\underline{H}_k(\P)>0$, then for $\P^k$-almost every $(x^1,\dots,x^k)\in \Omega^k$,
\begin{equation}\label{eqren1}
 \underset{n\rightarrow+\infty}{\overline\lim}\frac{M_n(x^1,\dots,x^k)}{\log n}\leq\frac{k}{(k-1)\underline{H}_k(\P)}.
 \end{equation}
Moreover, if the system is $\alpha$-mixing with an exponential decay or if it is $\psi$-mixing with $\psi(g)=g^{-a}$ for some $a>0$ then, for $\P^k$-almost every $(x^1,\dots,x^k)\in \Omega^k$,
\begin{equation}\label{eqren2} 
\underset{n\rightarrow+\infty}{\underline\lim}\frac{M_n(x^1,\dots,x^k)}{\log n}\geq\frac{k}{(k-1)\overline{H}_k(\P)}.
\end{equation}
Therefore, if the generalized R\'enyi entropy exists, then for $\P^k$-almost every $(x^1,\dots,x^k)\in \Omega^k$,
\[ \underset{n\rightarrow+\infty}{\lim}\frac{M_n(x^1,\dots,x^k)}{\log n}=\frac{k}{(k-1)H_k(\P)}.\]

\end{theorem}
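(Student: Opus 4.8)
The plan is to establish the three assertions in order: the upper bound \eqref{eqren1} by a first-moment estimate needing only stationarity, the lower bound \eqref{eqren2} by a second-moment estimate where the mixing hypothesis enters, and then to combine them when $H_k(\P)$ exists. Below $\E$ denotes expectation with respect to the product measure $\P^k$ on $\Omega^k$, and I write $S_m:=\sum_{y\in\mathcal A^m}\P(C_m(y))^k$ for the quantity denoted $\sum\P(C_m)^k$ above. The observation used throughout is that, by $\sigma$-invariance of $\P$, for any fixed shifts $i_1,\dots,i_k$ one has $\P^k\big(x^1_{i_1+j}=\dots=x^k_{i_k+j}\text{ for }0\le j<m\big)=S_m$. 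The overall architecture parallels the proofs of Theorems~\ref{thineq} and~\ref{dsup}.

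\emph{Upper bound.} Fix a small $\eps>0$ and set $m_n=\big\lceil\tfrac{k(1+\eps)}{(k-1)(\underline H_k(\P)-\eps)}\log n\big\rceil$. The event $\{M_n\ge m_n\}$ is the union, over the at most $n^k$ tuples $0\le i_1,\dots,i_k\le n-m_n$, of the events that the corresponding $m_n$-words coincide, so by the observation above and a union bound $\P^k(M_n\ge m_n)\le n^k S_{m_n}$. By definition of $\underline H_k(\P)$, for $n$ large $S_{m_n}\le e^{-(k-1)m_n(\underline H_k(\P)-\eps)}\le n^{-k(1+\eps)}$, whence $\P^k(M_n\ge m_n)\le n^{-k\eps}$. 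This is summable along the geometric subsequence $n_j=2^j$, so Borel--Cantelli gives $M_{n_j}<m_{n_j}$ for $j$ large, a.s.; since $n\mapsto M_n$ is non-decreasing, sandwiching an arbitrary $n$ between consecutive $n_j$'s and then letting $\eps\downarrow0$ along a countable sequence yields \eqref{eqren1}.

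\emph{Lower bound: second moment over a sparse grid.} Fix a small $\eps>0$, set $m_n=\big\lfloor\tfrac{k(1-\eps)}{(k-1)(\overline H_k(\P)+\eps)}\log n\big\rfloor$, choose a separation parameter $g_n$ (of order $\log n$ in the $\alpha$-mixing case, a slowly growing sequence such as a power of $\log n$ in the $\psi$-mixing case), put $\ell_n=m_n+g_n$ and $G_n=\{0,\ell_n,2\ell_n,\dots\}\cap\{0,\dots,n-m_n\}$, and define
\[W_n=\#\Big\{(i_1,\dots,i_k)\in G_n^k:\ x^1_{i_1+j}=\dots=x^k_{i_k+j}\ \text{for}\ 0\le j<m_n\Big\},\]
so that $\{M_n\ge m_n\}\supseteq\{W_n\ge 1\}$. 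Then $\E W_n=|G_n|^kS_{m_n}$, and since $S_{m_n}\ge e^{-(k-1)m_n(\overline H_k(\P)+\eps)}\ge n^{-k(1-\eps)}$ while $|G_n|\gtrsim n/\log n$, we get $\E W_n\gtrsim n^{k\eps}/(\log n)^k\to\infty$. By Chebyshev's inequality $\P^k(M_n<m_n)\le\P^k(W_n=0)\le\var(W_n)/(\E W_n)^2$, so the task reduces to proving $\var(W_n)=o\big((\E W_n)^2\big)$, at least along a subsequence.

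\emph{The variance estimate, which is the main difficulty.} Writing $W_n=\sum_{\mathbf i\in G_n^k}\mathbbm 1_{A_{\mathbf i}}$ with $A_{\mathbf i}$ the event that the $m_n$-words at shifts $\mathbf i$ all coincide, one has $\var(W_n)\le\E W_n+\sum_{\mathbf i\neq\mathbf i'}\big(\P^k(A_{\mathbf i}\cap A_{\mathbf i'})-S_{m_n}^2\big)$, the first term being $o\big((\E W_n)^2\big)$ since $\E W_n\to\infty$. As $\P^k$ is a product, $\P^k(A_{\mathbf i}\cap A_{\mathbf i'})=\sum_{w,w'\in\mathcal A^{m_n}}\prod_{l=1}^k\P\big(x^l|_{[i_l,i_l+m_n)}=w,\ x^l|_{[i'_l,i'_l+m_n)}=w'\big)$, and the purpose of the grid is that for $\mathbf i\neq\mathbf i'$ in $G_n^k$ each coordinate $l$ is either \emph{coinciding} ($i_l=i'_l$) or \emph{separated} ($|i_l-i'_l|\ge\ell_n$, so the two length-$m_n$ windows are disjoint with a gap $\ge g_n$), with no overlapping or close-but-disjoint windows to handle. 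For a separated coordinate the mixing hypothesis replaces the $l$-th factor by $\P(C_{m_n}(w))\P(C_{m_n}(w'))$ up to an error ($+\alpha(g_n)$ additively in the $\alpha$-mixing case, a factor $1+\psi(g_n)$ multiplicatively in the $\psi$-mixing case); for a coinciding coordinate the factor vanishes unless $w=w'$, in which case it equals $\P(C_{m_n}(w))$. Summing over $w,w'$: if all coordinates are separated the leading term is exactly $S_{m_n}^2$ and cancels in the difference, leaving only mixing errors which, summed over $\le|G_n|^{2k}$ pairs, are negligible precisely because $\alpha$ decays exponentially (so that $\alpha(g_n)|G_n|^{2k}|\mathcal A|^{2m_n}\to0$ when $g_n$ is a large enough multiple of $\log n$) or because the $\psi$-error is relative to $S_{m_n}^2$ and $\psi(g_n)\to0$; if the nonempty proper set $E$ of coinciding coordinates is prescribed, then $w=w'$ is forced and the surviving factors combine into a higher power of $\P(C_{m_n}(w))$, giving a contribution $\lesssim|G_n|^{2k-|E|}\sum_w\P(C_{m_n}(w))^{2k-|E|}$, which a direct comparison with $(\E W_n)^2\approx|G_n|^{2k}S_{m_n}^2$ shows is $o\big((\E W_n)^2\big)$ whenever $|E|<k$ — this is where the sub-criticality of the choice of $m_n$ (i.e. $\E W_n\to\infty$) is used. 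Hence $\var(W_n)=o\big((\E W_n)^2\big)$; Borel--Cantelli along a geometric subsequence, together with the monotonicity of $M_n$, yields $\liminf_n M_n/\log n\ge\tfrac{k(1-\eps)}{(k-1)(\overline H_k(\P)+\eps)}$ a.s., and $\eps\downarrow0$ proves \eqref{eqren2}. Finally, if $H_k(\P)$ exists then $\underline H_k(\P)=\overline H_k(\P)=H_k(\P)$, so \eqref{eqren1}--\eqref{eqren2} combine to give $\lim_n M_n/\log n=\tfrac{k}{(k-1)H_k(\P)}$ a.s., completing the proof.
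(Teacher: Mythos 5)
Your proof is correct, and the lower-bound argument takes a genuinely different route from the paper's. The upper bound coincides in substance with the paper's: the paper phrases it through the counting variable $S_n$ of \eqref{defsnseq}, computes $\E(S_n)=n^k\sum\P(C_{k_n})^k$, and applies Markov's inequality; your union bound is the same estimate stated without the auxiliary variable. For the lower bound, the paper keeps the full \emph{dense} count $S_n$ over all $n^k$ index tuples. Its second-moment estimate must therefore treat pairs $(\mathbf{i},\mathbf{i}')$ with coordinates that are close but unequal (overlapping or nearly adjacent windows), which it does by a Cauchy--Schwarz step \eqref{ineqholderseq} producing the quantity $\sum\P(C_{k_n})^{2k-p}$, then controlled by $\bigl(\sum\P(C_{k_n})^k\bigr)^{(2k-p)/k}$ via countable subadditivity of $t\mapsto t^{k/(2k-p)}$, as in \eqref{ineqsubba}; the resulting overlap count contributes the factor $(g+k_n)^p$ in \eqref{ineqvarfinseq}. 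You instead \emph{thin} the count to a separated grid $G_n$ of spacing $\ell_n=m_n+g_n$, which forces any two distinct shift indices in a given coordinate to be at least $g_n$ apart; the case analysis collapses to coinciding-versus-separated, the Cauchy--Schwarz step disappears entirely, and the paper's overlap-counting factor is replaced by a clean exponent on $|G_n|$. The price is only a polylogarithmic loss in $\E W_n$ compared with $\E S_n$, which is harmless since the subcritical choice of $m_n$ still makes $\E W_n\to\infty$. One step you should make explicit: the ``direct comparison'' showing that the coinciding-coordinates contribution (for $0<e<k$) is $o\bigl((\E W_n)^2\bigr)$ hinges on exactly the same subadditivity inequality the paper uses, namely $\sum_w\P(C_{m_n}(w))^{2k-e}\le\bigl(\sum_w\P(C_{m_n}(w))^k\bigr)^{(2k-e)/k}$; without it the ratio $\sum_w\P(C_{m_n}(w))^{2k-e}\big/\bigl(|G_n|^{e}S_{m_n}^2\bigr)$ is not visibly small. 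With that one step spelled out, your argument closes correctly.
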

This theorem can be applied, for example, to Markov chains and Gibbs states:
\begin{example}[Markov chains]
If $(\Omega,\P,\sigma)$ is an irreducible and aperiodic Markov chain on a finite alphabet $\mathcal{A}$, then it is $\psi$-mixing with an exponential decay (see e.g. \cite{Bradley}).
If we denote by $P$ the associated stochastic matrix (with entries $P_{ij}$), then the matrix $P(k)$ whose entries are $P_{ij}(k)=P_{ij}^{k}$ has, by the Perron-Frobenius theorem, a single largest eigenvalue $\lambda_k$. Moreover, the generalized R\'enyi entropy exists and $H_k(\P)=-\log \lambda_k/(k-1)$ \cite{haydn-vaienti}. Thus, for $\P^k$-almost every $(x^1,\dots,x^k)\in \Omega^k$,
\[ \underset{n\rightarrow+\infty}{\lim}\frac{M_n(x^1,\dots,x^k)}{\log n}=\frac{k}{-\log \lambda_k}.\]
\end{example}

\begin{example}[Gibbs states]
Let $\P$ be a Gibbs state of a H\"older-continuous potential $\phi$. Then, the system is $\psi$-mixing with an exponential decay \cite{Bowen, Ruelle}. Moreover, the generalized R\'enyi entropy exists and $H_k(\P)=(1/(k-1))\left(kP(\phi)-P(k\phi)\right)$ where $P(\phi)$ is the pressure of the potential $\phi$ \cite{haydn-vaienti}. Thus, for $\P^k$-almost every $(x^1,\dots,x^k)\in \Omega^k$,
\[ \underset{n\rightarrow+\infty}{\lim}\frac{M_n(x^1,\dots,x^k)}{\log n}=\frac{k}{kP(\phi)-P(k\phi)}.\]
\end{example}


\subsection{Encoded sequences}
While working with sequences, one can wonder if similar results to the one presented in Theorem~\ref{seqmat} are still satisfied if the original sequences are modified or encoded in some way (e.g. contaminated, compressed, decompressed).

 Thus, for a measurable function $f: \Omega \to \tilde\Omega $ (called {an encoder}) and for $k$ sequences $x^1,\dots, x^k\in\Omega$, we are interested in the behaviour of
\begin{align*}
&M_n^f(x^1,..., x^k)\\
&=\max\{m:f(x^1)_{i_1+j}=...=f(x^k)_{i_k+j}\textrm{ for $0\leq j<m$ and for some $0\leq i_1,...,i_k\leq n-m$}\}.
\end{align*}
For two sequences, this problem has been studied in \cite{CoLaRo}.

Let  $\tilde\Omega=\tilde{\mathcal{A}}^{\mathbb{N}}$ for some alphabet $\tilde{\mathcal{A}}$ and $\tilde{\mathcal{F}}_0^n$ the sigma-algebra generated by the $n$-cylinders in $\tilde\Omega$.

For encoded sequences, to obtain an optimal result, we will need some control on the length of preimage of cylinders:

(HC) $C_n \in \tilde{\mathcal{F}}_0^n$ implies $f^{-1}C_n \in \mathcal{F}_0^{h(n)},$ where $h(n)= o(n^{\gamma})$, for some $\gamma>0$.

\begin{theorem} \label{discrete} Consider $f:\Omega \to \tilde\Omega$ {an encoder} such that $\underline{H}_k(f_*\mathbb{P})>0$. For $\P^k$-almost every $(x^1,\dots,x^k)\in \Omega^k$,
\begin{eqnarray*}
\underset{n\rightarrow+\infty}{\overline\lim}\frac{M_n^f(x^1,\dots,x^k)}{\log n}\leq\frac{k}{(k-1)\underline{H}_k(f_*\P)}.
\end{eqnarray*}
Moreover, if the system $(\Omega, \mathbb{P}, \sigma)$ is $\alpha$-mixing with an exponential decay (or $\psi$-mixing with $\psi(g)=g^{-a}$ for some $a>0$) and (HC) is satisfied, then for $\P^k$-almost every $(x^1,\dots,x^k)\in \Omega^k$,
\begin{eqnarray*}
\underset{n\rightarrow+\infty}{\underline\lim}\frac{M_n^f(x^1,\dots,x^k)}{\log n}\geq\frac{k}{(k-1)\overline{H}_k(f_*\P)}.
\end{eqnarray*}
Therefore, if the generalized R\'enyi entropy exists, then for $\P^k$-almost every $(x^1,\dots,x^k)\in \Omega^k$,
\[ \underset{n\rightarrow+\infty}{\lim}\frac{M_n^f(x^1,\dots,x^k)}{\log n}=\frac{k}{(k-1)H_k(f_*\P)}.\]
\end{theorem}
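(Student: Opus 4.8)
The plan is to reduce the statement for encoded sequences to Theorem~\ref{seqmat} applied to the pushforward system, handling the encoding $f$ exactly as the observation $f$ was handled in the passage from Theorem~\ref{thprinc} to Theorem~\ref{thprincobs} in the continuous setting. First I would deal with the upper bound \eqref{eqren1}-type inequality: observe that if $f(x^1)_{i_1+j}=\dots=f(x^k)_{i_k+j}$ for $0\le j<m$, then setting $y^\ell=f(x^\ell)$ we get exactly a common substring of length $m$ for the sequences $y^1,\dots,y^k$ in $\tilde\Omega$. Hence $M_n^f(x^1,\dots,x^k)=M_n(f(x^1),\dots,f(x^k))$, where on the right $M_n$ is the longest-common-substring functional in $\tilde\Omega$. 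Since $(f\times\dots\times f)_*\P^k=(f_*\P)^k$, pushing forward the full-measure set from the first part of Theorem~\ref{seqmat} applied to $(\tilde\Omega,f_*\P,\tilde\sigma)$ immediately gives, for $\P^k$-a.e.\ $(x^1,\dots,x^k)$,
\[
\overline{\lim}_{n\to\infty}\frac{M_n^f(x^1,\dots,x^k)}{\log n}\le\frac{k}{(k-1)\underline{H}_k(f_*\P)},
\]
and this part requires no mixing and no (HC).

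The lower bound is where the encoder genuinely interferes, because the first-moment/second-moment argument behind \eqref{eqren2} in Theorem~\ref{seqmat} is carried out on the original system $(\Omega,\P,\sigma)$ using its mixing rate, not on $(\tilde\Omega,f_*\P)$, which a priori has no good mixing. The key step is to rewrite the event $\{M_n^f\ge m\}$ in terms of $\Omega$: a common substring of length $m$ at positions $(i_1,\dots,i_k)$ means $x^\ell\in\sigma^{-i_\ell}f^{-1}(C_m)$ for the same $m$-cylinder $C_m\in\tilde{\mathcal{F}}_0^m$ for all $\ell$. By (HC), $f^{-1}(C_m)\in\mathcal{F}_0^{h(m)}$ is a union of $h(m)$-cylinders of $\Omega$, and $(f_*\P)(C_m)=\P(f^{-1}C_m)$. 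One then runs the same counting as for Theorem~\ref{seqmat}: the expected number of matches of length $m$ among positions $0\le i_1,\dots,i_k\le n-m$ is comparable to $n^k\sum_{C_m}(f_*\P)(C_m)^k\approx n^k e^{-(k-1)m\overline{H}_k(f_*\P)}$, which stays bounded away from $0$ as long as $m\le\frac{k}{(k-1)\overline{H}_k(f_*\P)}\log n$ up to an $\eps$; a second-moment (Chung–Erd\H{o}s / Borel–Cantelli along a subsequence $n_t$) argument then forces $M_{n_t}^f$ to be at least this size infinitely often. The role of (HC) is precisely to bound the correlation error terms: when we estimate $\P(\sigma^{-i_\ell}f^{-1}C_m\cap\sigma^{-i_{\ell'}}f^{-1}C_m)$, the two events live in $\mathcal{F}_0^{h(m)}$, so two occurrences are ``independent up to $\alpha(g)$'' as soon as the gap $g=|i_\ell-i_{\ell'}|-h(m)$ is positive; with $\alpha$ decaying exponentially (or $\psi$ polynomially) and $h(m)=o(m^\gamma)=o((\log n)^\gamma)$, the total contribution of overlapping/correlated terms is negligible compared with the main term $n^k\sum(f_*\P)(C_m)^k$. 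This is the same mechanism as in \cite{BaLiRo,CoLaRo} and in the proof of Theorem~\ref{seqmat}, so I would invoke that argument verbatim with $C_m$ replaced by $f^{-1}C_m$ and $\P(C_m)$ by $(f_*\P)(C_m)$, checking only that the extra shift by $h(m)$ in the mixing gaps is absorbed.

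Finally, combining the two inequalities on the common full-measure set, and using that when $H_k(f_*\P)$ exists we have $\underline{H}_k(f_*\P)=\overline{H}_k(f_*\P)=H_k(f_*\P)$, yields
\[
\lim_{n\to\infty}\frac{M_n^f(x^1,\dots,x^k)}{\log n}=\frac{k}{(k-1)H_k(f_*\P)}
\]
for $\P^k$-a.e.\ $(x^1,\dots,x^k)$. I expect the main obstacle to be bookkeeping in the second-moment estimate: one must verify that enlarging each cylinder-dependent event from depth $m$ to depth $h(m)$ does not spoil the separation-of-scales on which the variance bound rests — concretely, that the number of ``bad'' index tuples $(i_1,\dots,i_k,i_1',\dots,i_k')$ for which some gap is smaller than $h(m)$ contributes $o\big((n^k\sum(f_*\P)(C_m)^k)^2\big)$. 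Since $h(m)=o((\log n)^\gamma)$ is sub-polynomial in $n$ while the relevant index ranges are of size $n$, this is a routine (if slightly tedious) adaptation of the corresponding step in the proof of Theorem~\ref{seqmat}, and the condition (HC) is exactly what makes it go through.
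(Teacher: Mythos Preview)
Your lower-bound argument is essentially the paper's own: work in $\Omega$, replace the indicator of $C_{k_n}(\sigma^{i_1}x^1)$ by the indicator of $f^{-1}C_{k_n}(f(\sigma^{i_1}x^1))$, and use (HC) so that these events lie in $\mathcal{F}_0^{h(k_n)}$; the mixing estimate then goes through with $\alpha(g)$ replaced by $\alpha(g+k_n-h(k_n))$, exactly as you describe.

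The upper bound, however, has a genuine gap. You write $M_n^f(x^1,\dots,x^k)=M_n(f(x^1),\dots,f(x^k))$ and then invoke the first part of Theorem~\ref{seqmat} for the system $(\tilde\Omega,f_*\P,\tilde\sigma)$. But Theorem~\ref{seqmat} is stated (and proved) for a \emph{shift-invariant} probability measure: the computation of $\E(S_n)$ leading to \eqref{esperanca} uses invariance of $\P$ under $\sigma$ to strip the shifts $\sigma^{i_l}$ from each factor. The pushforward $f_*\P$ is in general \emph{not} $\tilde\sigma$-invariant (the paper remarks this explicitly immediately after the statement of Theorem~\ref{discrete}), so you cannot apply Theorem~\ref{seqmat} as a black box to $(\tilde\Omega,f_*\P,\tilde\sigma)$.

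The fix is to run the first-moment argument on $\Omega$ rather than on $\tilde\Omega$: define
\[
S_n^f(x^1,\dots,x^k)=\sum_{i_1,\dots,i_k=0}^{n-1}\prod_{l=2}^k \mathbbm{1}_{f^{-1}C_{k_n}(f(\sigma^{i_1}x^1))}(\sigma^{i_l}x^l),
\]
and use the $\sigma$-invariance of $\P$ (not of $f_*\P$) to get
\[
\E(S_n^f)=n^k\sum_{C_{k_n}\in\tilde{\mathcal F}_0^{k_n}}(f_*\P)(C_{k_n})^{k},
\]
after which the Markov/Borel--Cantelli step is identical to the proof of \eqref{eqren1}. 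In other words, the stationarity you need lives on the \emph{source} space, and the encoder only enters through $f_*\P$ in the final moment formula; trying to move the whole argument to $\tilde\Omega$ loses exactly that stationarity.
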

We emphasize that one cannot obtain this result from Theorem~\ref{seqmat} since in general the pushforward measure $f_*\mathbb{P}$ is not stationary.

\begin{example}[Stochastic scrabble]
We will consider here the stochastic scrabble (defined by \cite{ArMoWa}) where common substring between sequences will be scored depending on the symbols that compose the substring. Thus, shorter substrings could be more significant than longer ones. 

Suppose that each letter $a\in \mathcal{A}$ is associated to a weight $v(a) \in \mathbb{N}^*$. We also denote the score of a string $z_0z_1\dots z_{m-1}$ by $V(z_0\dots z_{m-1}) =\sum_{j=0}^{m-1} v(z_j)$. 

For $x^1,\dots,x^k\in \mathcal{A}^\N$ , we are interested in the $n^{th}$-highest-scoring matching substring:

\begin{align*}
&V_n(x^1,\dots,x^k) \\
&= \max\limits_{0 \leq i_1,...,i_k \leq n-m}\lt\{V(z_0\dots z_{m-1}): \exists \ 1 \leq m \leq n \ \mbox{s.t $\forall \,0\leq j<m$ } \ z_j = x_{i_1+j}^1=\dots=x_{i_k+j}^{k} \rt\}.
\end{align*}

If $(\mathcal{A}^\N,\P,\sigma)$ is an irreducible and aperiodic Markov chain with transition matrix $\lt(p_{ij}\rt)_{i,j}$ on a finite alphabet $\mathcal{A}=\{1,\dots,d\}$ and assuming that $gdc\{v(1), v(2), \ldots, v(d)\}=1$ then for $\P^k$-almost every $(x^1,\dots,x^k)\in (\mathcal{A}^\N)^k$
\begin{equation*}
\lim_{n \to \infty} \frac{V_n(x^1,\dots,x^k)}{\log n}=\frac{k}{-\log q},
\end{equation*}
where $q$ is the largest positive eigenvalue of the matrix $\lt[q^k_{ij}\rt]_{1\leq i,j \leq \sum_{k=1}^d v(k)}$ with
$$
\begin{array}{lll}
q_{i_\ell i_{\ell +1}} = 1 &  \mbox{if} & 1\leq \ell \leq v(i)-1  \ \mbox{and} \ 1\leq i,j \leq d \ ; \\
q_{i_{v(i)} j_{1}} = p_{ij} & \mbox{if}  &  1\leq i,j \leq d \ ; \\
q_{ij}=0 & & \mbox{otherwise}.
\end{array}
$$
As in \cite[Section 2.3]{CoLaRo}, one can prove this result applying Theorem \ref{discrete} with the {encoder} $f$ defined by
\begin{equation*}
\begin{array}{cccl}
  f \  \colon & \  \chi^{\mathbb{N}}&\to& \chi^{\mathbb{N}} \nonumber \\ \label{old}
 & x_0x_1\cdots &\mapsto &\underbrace{x_0x_0\cdots x_0}_{v(x_0)} \underbrace{x_1x_1\cdots x_1}_{v(x_1)} \cdots \underbrace{x_nx_n\cdots x_n}_{v(x_n)} \cdots
 \end{array}
 \end{equation*}
and observing that $M_n^f(x^1,\dots,x^k)=V_n(x^1,\dots,x^k)$. 
\end{example}

\section{Multidimensional piecewise expanding maps}\label{secexp}
In this section, we  apply Theorem~\ref{thprinc} to a family of maps defined by Saussol \cite{Saussol1}: multidimensional piecewise uniformly expanding maps. It was observed in \cite{AFLV11} that these maps generalize Markov maps which also contain one-dimensional piecewise uniformly expanding maps.

Let $N\geq 1$ be an integer. We will work in the Euclidean space $\mathbb{R}^N$. We denote by $B_\epsilon(x)$ the ball with center $x$ and radius $\epsilon$. For a set $E\subset\mathbb{R}^N$, we write 
$$B_\epsilon(E):=\{y\in\mathbb{R}^N: d(y,E)\leq \epsilon\}.$$

\begin{definition}[Multidimensional piecewise expanding systems]
Let $X$ be a compact subset of $\R^N$ with $\overline{X^\circ}=X$ and $T:X \rightarrow X$. The system $(X,T)$ is a multidimensional piecewise expanding system if there exists a family of at most countably many disjoint open sets $U_i \subset X$ and $V_i$ such that $\overline{U_i} \subset V_i$ and maps $T_i:V_i  \rightarrow \R^N$ satisfying for some $0< \alpha \leq1$, for some small enough $ \epsilon_0>0$, and for all $i$:
\begin{enumerate}
\item $T\vert_{U_i}=T_i\vert_{ U_i} $ and $B_{\epsilon_0}(TU_i)\subset T_i(V_i) $;
\item $T_i \in C^1(V_i),T_i \text{ is injective and }  T_i^{-1}\in C^1(T_iV_i) .$ Moreover, there exists a constant $c$, such that for all $\epsilon \leq \epsilon_0, z \in T_iV_i$ and $x,y \in B_{\epsilon}(z)\cap T_iV_i$ we have 
$$|\det D_xT_i^{-1}-\det D_yT_i^{-1}| \leq c \epsilon^{\alpha} |\det D_zT_i^{-1}|;$$
\item $Leb(X\setminus \bigcup_i U_i)=0$;
\item there exists $s=s(T)<1$ such that for all $u,v \in TV_i$ with $d(u,v) \leq \epsilon_0$ we have $d(T_i^{-1}u,T_i^{-1}v)\leq s d(u,v)$;
\item let $G(\epsilon,\epsilon_0):=\sup_x  G(x,\epsilon,\epsilon_0)$ where
$$G(x,\epsilon,\epsilon_0)=\sum_i\frac{Leb(T_i^{-1}B_{\epsilon}(\partial TU_i)\cap B_{(1-s)\epsilon_0}(x))}{m(B_{(1-s)\epsilon_0}(x))},$$
then the number $\eta=\eta(\delta):=s^\alpha +2 \sup_{\epsilon \leq \delta} \frac{G(\epsilon)}{\epsilon^\alpha}\delta^\alpha$ satisfies $\sup_{\delta \leq \epsilon_0} \eta(\delta)<1.$
\end{enumerate}
\end{definition}
We will prove that the multidimensional piecewise expanding systems satisfy the conditions of Theorem~\ref{thprinc}. 
\begin{proposition}
Let $(X,T)$ be a topologically mixing multidimensional piecewise expanding map and $\mu$ be its absolutely continuous invariant probability measure. Then for $\mu^k$-almost every $(x_1,\dots, x_k)\in X^k$,
\[ \underset{n\rightarrow+\infty}{\lim}\frac{\log m_n(x_1,\dots, x_k)}{-\log n}=\frac{k}{(k-1)N}.\]

\end{proposition}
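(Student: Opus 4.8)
The plan is to obtain the proposition by combining Theorem~\ref{thineq} (which supplies the bound for the $\overline\lim$) with Theorem~\ref{thdsuphold} (which supplies the bound for the $\underline\lim$); it therefore suffices to check that a topologically mixing multidimensional piecewise expanding system $(X,T)$ together with its absolutely continuous invariant probability measure $\mu$ satisfies the hypotheses of those two theorems with $\mathcal{C}=\mathcal{H}^\alpha(X,\R)$, and to compute $D_k(\mu)=N$. Tightness of $(X,d)$ is free, since $X\subset\R^N$ with the Euclidean metric is tight, as recorded after Definition~\ref{deftight}. Condition (HA) is also immediate: Saussol's invariant density belongs to the quasi-H\"older space on which the transfer operator acts and is in particular bounded, so (HA) holds by the remark following it (``satisfied if the measure is absolutely continuous with respect to Lebesgue with a bounded density'').

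For the mixing hypothesis (H1) I would invoke Saussol's theorem \cite{Saussol1}: under topological mixing the transfer operator of $T$ has a spectral gap on the quasi-H\"older space $V_\alpha$, where $\alpha$ is the H\"older exponent appearing in the distortion assumption, and this gives exponential decay of correlations of the form
\[\Big|\int_X\psi\,(\phi\circ T^n)\,d\mu-\int_X\psi\,d\mu\int_X\phi\,d\mu\Big|\leq C\lambda^n\,\|\psi\|_{V_\alpha}\|\phi\|_{L^1}\]
for some $\lambda\in(0,1)$. An $\alpha$-H\"older function $\psi$ satisfies $\operatorname{osc}(\psi,B_\epsilon(x))\leq 2^\alpha\|\psi\|_{\mathcal{H}^\alpha}\epsilon^\alpha$, whence $\|\psi\|_{V_\alpha}\leq C_0\|\psi\|_{\mathcal{H}^\alpha}$, and $\|\phi\|_{L^1}\leq C_1\|\phi\|_\infty\leq C_1\|\phi\|_{\mathcal{H}^\alpha}$; restricting to H\"older observables and absorbing the resulting multiplicative constant into an equivalent norm on $\mathcal{H}^\alpha$ (which leaves Theorem~\ref{thdsuphold} applicable) yields exactly (H1) with $\theta_n=\lambda^n$.

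The only genuinely computational point is $D_k(\mu)=N$. For $\underline{D}_k(\mu)\geq N$ I would use only the boundedness of the density $\rho$: from $\mu(B(x,r))\leq\|\rho\|_\infty c_N r^N$ one gets $\int_X\mu(B(x,r))^{k-1}\,d\mu\leq C r^{N(k-1)}$, and dividing the logarithm by $(k-1)\log r<0$ gives a ratio bounded below by $N+o(1)$. For $\overline{D}_k(\mu)\leq N$ I would combine the Lebesgue differentiation theorem, which gives $\mu(B(x,r))/(c_N r^N)\to\rho(x)$ for Lebesgue-a.e.\ $x$ and hence $\mu$-a.e.\ $x$, with the fact that $\rho(x)>0$ for $\mu$-a.e.\ $x$ (since $\mu(\{\rho=0\})=\int_{\{\rho=0\}}\rho\,dLeb=0$); Fatou's lemma then yields
\[\underset{r\rightarrow0}{\underline\lim}\; r^{-N(k-1)}\int_X\mu(B(x,r))^{k-1}\,d\mu\;\geq\; c_N^{k-1}\int_X\rho(x)^{k-1}\,d\mu(x)=c_N^{k-1}\int_X\rho^k\,dLeb>0,\]
so $\int_X\mu(B(x,r))^{k-1}\,d\mu\geq c\,r^{N(k-1)}$ for all small $r$, and the ratio is bounded above by $N+o(1)$. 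Hence $D_k(\mu)=N>0$.

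With all hypotheses verified, Theorem~\ref{thineq} (applied with $\underline{D}_k(\mu)=N$) and Theorem~\ref{thdsuphold} (applied with $\overline{D}_k(\mu)=N$) give, for $\mu^k$-almost every $(x_1,\dots,x_k)$,
\[\frac{k}{(k-1)N}\;\leq\;\underset{n\rightarrow+\infty}{\underline\lim}\frac{\log m_n(x_1,\dots,x_k)}{-\log n}\;\leq\;\underset{n\rightarrow+\infty}{\overline\lim}\frac{\log m_n(x_1,\dots,x_k)}{-\log n}\;\leq\;\frac{k}{(k-1)N},\]
which is the claim. The main obstacle I anticipate is precisely the identity $D_k(\mu)=N$, and within it the lower bound $\int_X\mu(B(x,r))^{k-1}\,d\mu\gtrsim r^{N(k-1)}$, which hinges on the almost-everywhere positivity of the invariant density rather than merely on its boundedness; everything else is either a topological triviality about subsets of $\R^N$ or a direct appeal to Saussol's spectral-gap theorem.
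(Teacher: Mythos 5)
Your proof is correct, and it takes a genuinely different route from the paper's. The paper applies Theorem~\ref{thprinc}, which means verifying the quantitative norm bound (H2) for the quasi-H\"older space $V_\alpha$: the bulk of the paper's argument is the direct estimation of $|\psi_p|_\alpha$ for the test functions $\psi_p$, splitting into the cases $r\leq\epsilon$ and $r>\epsilon$ and bounding oscillations by the Lebesgue measure of annuli. You instead route through Theorem~\ref{thdsuphold}: you note (HA) is immediate because Saussol's invariant density lies in $V_\alpha\subset L^\infty$, you pass Saussol's $V_\alpha$-versus-$L^1$ decay of correlations to (H1) with $\mathcal{C}=\mathcal{H}^\alpha$ via the elementary embeddings $\|\cdot\|_{V_\alpha}\lesssim\|\cdot\|_{\mathcal{H}^\alpha}$ (from $\operatorname{osc}(\psi,B_\epsilon(x))\lesssim[\psi]_\alpha\epsilon^\alpha$ on compact $X$) and $\|\cdot\|_{L^1}\lesssim\|\cdot\|_\infty$, and you then let the Lipschitz-approximation machinery inside the proof of Theorem~\ref{thdsuphold} do the work that the paper performs by hand in (H2). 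Both approaches therefore end up with the same technical content (controlling how characteristic functions of balls interact with the mixing norm), but your version pushes that work into the general theorem and keeps the example-specific verification light, whereas the paper's version stays in the Banach space natural to Saussol's theorem and avoids losing generality at the embedding step. Your computation of $D_k(\mu)=N$ is also more detailed than the paper's, which only asserts it: the lower bound $\underline D_k\geq N$ from boundedness of the density, and the upper bound $\overline D_k\leq N$ via Lebesgue differentiation plus Fatou and the a.e.\ positivity of $\rho$ on $\operatorname{supp}\mu$, are exactly the right ingredients and are worth recording.
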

\begin{proof}
First of all, we define the Banach space involved in the mixing conditions. Let $\Gamma \subset X$ be a Borel set. We define the oscillation of $\varphi \in L^1(Leb)$ over $\Gamma$ as

$$osc(\varphi,\Gamma)= \underset{ \Gamma}{ \textrm{ess-sup} } (\varphi)- \underset{ \ \Gamma}{ \textrm{ess-inf} } (\varphi).$$

Now, given real numbers $0<\alpha \leq 1$ and $0<\epsilon_0<1$ consider the following $\alpha$-seminorm
$$|\varphi|_\alpha=\underset{ 0<\epsilon\leq\epsilon_0}{ \sup \  } \epsilon^{ -\alpha} \int_{ X}osc(\varphi,B_\epsilon(x))dx.$$
We observe that $X\ni x \mapsto osc(\varphi,B_\epsilon(x))$ is a measurable function (see \cite{Saussol1})  and 
$$\text{supp} (\text{osc}(\varphi,B_\epsilon(x)))\subset B_\epsilon(\text{supp } \varphi).$$ 

Let $V_{\alpha}$ be the space of $L^1(Leb)-$functions such that $|\varphi|_\alpha<\infty$ endowed with the norm
$$\|\varphi\|_{\alpha}=\|\varphi\|_{L^1(Leb)}+|\varphi|_\alpha.$$
Then
$(V_{\alpha},\|\cdot\|_{\alpha})$ is a Banach space which does not depend on the choice of $\epsilon_0 $  and
 $V_{\alpha} \subset L^\infty $  (see \cite{Saussol1}).

Saussol \cite{Saussol1} proved that for a piecewise expanding map $T:X \longrightarrow X$, where $X\subset \R^N$ is a compact set, there exists an absolutely continuous invariant probability measure $\mu$ with density $h\in V_\alpha$ which enjoys exponential  decay of correlations against $L^1$ observables on $V_{\alpha}$. More precisely, for all $\psi\in V_{\alpha}$, $\phi\in L^1(\mu)$ and $n\in\N^*$,  we have
\[\left|\int_X\psi.\phi\circ T^n\, d\mu-\int_X \psi d\mu\int_X\phi d \mu\right|\leq\|\psi\|_{\alpha}\|\phi\|_1\theta_n,\]
with $\theta_n =a^{n}$ ($0\leq a<1$). This means that the system $(X,T,\mu)$ satisfies the condition (H1) with  $\mathcal{C}=V_\alpha$.

 It remains to show that the system also satisfies the conditions (H2) (with $r_0=\epsilon_0$).
To this end we need to estimate for each $p\in \{1,\cdots,k\},$ the norm $\|\psi_p\|_{\alpha}$, where the functions $\psi_p$ were defined in \eqref{psi1} and \eqref{psi2}. Since $\psi_p\in L^1(Leb)$ we just need to estimate its $\alpha$-seminorm. 

Since
 \begin{align*}
  \textrm{supp} \ osc(\psi_p,B_\epsilon(\cdot)) \subset B_\epsilon(X),
  \end{align*}

we infer that
$$|\psi_p|_\alpha=\underset{ 0<\epsilon\leq\epsilon_0}{ \sup \  } \epsilon^{ -\alpha} \int_{B_\epsilon(X)}osc(\psi_p,B_\epsilon(x))dx.$$ 
 

 For $p=1$ the computation is similar to the one leading to (20) in \cite{BaLiRo} so  we will only treat the case $p\geq 2$.
 
 Let $0<\epsilon\leq\epsilon_0$. First of all, suppose that $r\leq\epsilon$. Since the density $h$ belongs to  $V_{\alpha}\subset L^\infty$, we have $h \leq c$ for some constant $c>0$. Thus, we observe that
 
 \begin{align*}
  osc(\psi_p,B_\epsilon(x))
&\leq \underset{ y\in B(x,\epsilon)\cap X }{ \textrm{ess-sup} } \psi_p(y)\leq\underset{ y\in B(x,\epsilon)\cap X }{ \textrm{ess-sup} } \int_{X^{p-1}}\left[\prod_{j=1}^{p-1}  \mathbbm{1}_{B(x_j,r)}(y)\right]d\mu^{p-1}(x_1,\dots,x_{p-1})\\
  &=\underset{ y\in B(x,\epsilon)\cap X }{ \textrm{ess-sup} } \mu (B(y,r))^{p-1}
  \leq  C_0^{p-1}c^{p-1}\epsilon^{N(p-1)},
  \end{align*}
where $C_0$ denotes the Lebesgue measure of the unit ball in $\mathbb{R}^N$. Now, using the fact that $B_\epsilon(X)\subset B_{\epsilon_0}(X)$ which is a compact set, we conclude that 
\begin{equation*}
 |\psi_p|_\alpha\leq\underset{ 0<\epsilon\leq\epsilon_0}{ \sup \  } \epsilon^{ -\alpha}  C_0^{p-1}c^{p-1}\epsilon^{N(p-1)}Leb( B_\epsilon(X))\nonumber\\
 \leq C \epsilon_0^{ N(p-1)-\alpha}\label{variationpsi1},
 \end{equation*}
where $C=C_0^{p-1}c^{p-1}Leb( X_{\epsilon_0})$.

For simplicity of notation, from now on we write $d\mu^{j-i+1}(i,j)$ instead of $d\mu^{j-i+1}(x_i,\dots,x_{j}).$

Now suppose $r>\epsilon$. Observe that $y\in B(x,\epsilon)$ implies
$ B(x,r-\epsilon)\subset B(y,r)\subset B(x,r+\epsilon)$.
Thus, if $x_p\in B(x,\epsilon)$, we infer that
\begin{eqnarray*}
& & \prod_{l=p+1}^k \mathbbm{1}_{B(x_p,r)}(x_l)\int_{X^{p-1}}\left[\prod_{j=1}^{p-1}\prod_{l=j+1}^{k}  \mathbbm{1}_{B(x_j,r)}(x_l)\right]d\mu^{p-1}(1,{p-1})\\
&&=\prod_{l=p+1}^k \mathbbm{1}_{B(x_p,r)}(x_l)\int_{X^{p-1}}\left[\prod_{j=1}^{p-1}\prod_{\substack{ l=j+1 \\l\neq p}}^{k}  \mathbbm{1}_{B(x_j,r)}(x_l)\right]\left[\prod_{j=1}^{p-1} \mathbbm{1}_{B(x_p,r)}(x_j)\right]d\mu^{p-1}(1,{p-1})\\
&&\leq\prod_{l=p+1}^k \mathbbm{1}_{B(x,r+\epsilon)}(x_l)\int_{X^{p-1}}\left[\prod_{j=1}^{p-1}\prod_{\substack{ l=j+1 \\l\neq p}}^{k}  \mathbbm{1}_{B(x_j,r)}(x_l)\right]\left[\prod_{j=1}^{p-1} \mathbbm{1}_{B(x,r+\epsilon)}(x_j)\right]d\mu^{p-1}(1,{p-1}).
\end{eqnarray*}

Then, we deduce that
\begin{eqnarray}\label{s}
& &\underset{ y\in B(x,\epsilon)\cap X }{ \textrm{ess-sup} } \psi_p(y)\\
&\leq&\prod_{l=p+1}^k \mathbbm{1}_{B(x,r+\epsilon)}(x_l)\int_{X^{p-1}}\prod_{j=1}^{p-1}\prod_{\substack{ l=j+1 \\l\neq p}}^{k}  \mathbbm{1}_{B(x_j,r)}(x_l)\prod_{j=1}^{p-1} \mathbbm{1}_{B(x,r+\epsilon)}(x_j)d\mu^{p-1}(1,{p-1}). \nonumber
\end{eqnarray}
Using similar ideas, one can prove that
\begin{eqnarray}\label{a2}
& &\underset{ y\in B(x,\epsilon)\cap X }{ \textrm{ess-inf} } \psi_p(y)\\
&\geq&\prod_{l=p+1}^k \mathbbm{1}_{B(x,r-\epsilon)}(x_l)\int_{X^{p-1}}\prod_{j=1}^{p-1}\prod_{\substack{ l=j+1 \\l\neq p}}^{k}  \mathbbm{1}_{B(x_j,r)}(x_l)\prod_{j=1}^{p-1} \mathbbm{1}_{B(x,r-\epsilon)}(x_j)d\mu^{p-1}(1,{p-1}). \nonumber
\end{eqnarray}
From \eqref{s} and \eqref{a2} we find that
\begin{eqnarray}
& &\underset{ y\in B(x,\epsilon) \cap X}{ \textrm{ess-sup} } \psi_p({y})- \underset{ \tilde{y}\in B(x,\epsilon) \cap X}{ \textrm{ess-inf} } \psi_p(\tilde{y})\nonumber\\
&\leq&\prod_{l=p+1}^k \mathbbm{1}_{B(x,r+\epsilon)}(x_l)\int_{X^{p-1}}\prod_{j=1}^{p-1}\prod_{\substack{ l=j+1 \\l\neq p}}^{k}  \mathbbm{1}_{B(x_j,r)}(x_l)A_1^{p-1}(x)d\mu^{p-1}(1,p-1)\nonumber\\
& &+A_{p+1}^k(x)\int_{X^{p-1}}\prod_{j=1}^{p-1}\prod_{\substack{ l=j+1 \\l\neq p}}^{k}  \mathbbm{1}_{B(x_j,r)}(x_l)\prod_{j=1}^{p-1} \mathbbm{1}_{B(x,r-\epsilon)}(x_j)d\mu^{p-1}(1,p-1),\nonumber
\end{eqnarray}
where  for $i<j$ we define $A_i^j(x)=\prod_{\theta=i}^j \mathbbm{1}_{B(x,r+\epsilon)}(x_\theta)-\prod_{\theta=i}^j \mathbbm{1}_{B(x,r-\epsilon)}(x_\theta)$.

Therefore
\begin{eqnarray}
\int_{ B_\epsilon(X)}osc(\psi_p,B_\epsilon(x))dx
\leq\int_{ B_\epsilon(X)}\big(\int_{X^{p-1}}A_1^{p-1}(x)d\mu^{p-1}(1,{p-1})
+A_{p+1}^k(x)\big)dx.\label{eqintmult}
\end{eqnarray}
Since $\mu$ is absolutely continuous we observe that
\begin{align}
\int_{X^{p-1}}&A_1^{p-1}(x)d\mu^{p-1}(1,{p-1})
=\mu(B(x,r+\epsilon))^{p-1}-\mu(B(x,r-\epsilon))^{p-1}\nonumber\\
&\leq(p-1)\left(\mu(B(x,r+\epsilon))-\mu(B(x,r-\epsilon)\right)\leq c(p-1)Leb(D(x)),\label{eqdisc1}
\end{align}
where $D(x)=B(x,r+\epsilon)\setminus B(x,r-\epsilon)$. For the second term in \eqref{eqintmult}, we have
\begin{align}
\int_{B_\epsilon(X)}&A_{p+1}^k(x) dx
\leq Leb\left(\bigcap_{l=p+1}^k B(x_l,r+\epsilon)\setminus\bigcap_{l=p+1}^k B(x_l,r-\epsilon)\right)\nonumber\\
&\leq\sum_{l=p+1}^k Leb\left(B(x_l,r+\epsilon)\setminus B(x_l,r-\epsilon)\right)
=\sum_{l=p+1}^kLeb(D(x_l)).\label{eqdisc2}
\end{align}

One can see that for any $y\in\R^N$
\begin{eqnarray}\label{lebdisc}
Leb(D(y))\leq 2C_0\epsilon  \sum_{k=0}^{N-1}\binom{N}{k}
\leq 2^{N+1} C_0\epsilon .
\end{eqnarray}
Finally, from  \eqref{eqintmult} - \eqref{lebdisc} we deduce that
\begin{eqnarray}
 |\psi|_\alpha&\leq&\underset{ 0<\epsilon\leq\epsilon_0}{ \sup \  } \epsilon^{ -\alpha} \left(c(p-1) 2^{N+1} C_0\epsilon Leb( B_\epsilon(X))+(k-p)2^{N+1} C_0\epsilon\right)\leq C_1 \epsilon_0^{ 1-\alpha},\label{variationpsi2}
 \end{eqnarray}
where $C_1=c(p-1) 2^{N+1} C_0 Leb( X_{\epsilon_0})+(k-p)2^{N+1} C_0$.

Thus, from \eqref{variationpsi1} and \eqref{variationpsi2}, we obtain that (H2) is satisfied. Moreover, one can show easily that ${D}_k(\mu)=N$ and the proposition is proved.
\end{proof}



\section{Proof of the symbolic case}\label{sec-discrete}

In this section we prove  the symbolic case (Theorem~\ref{seqmat}).
  We emphasize that even if the proof is based on the ideas of Theorem 7 in \cite{BaLiRo}, the generalisation is not immediate and some extra care is needed. In particular, one need to choose carefully between several different (and equivalent) definitions for $S_n$ (see \eqref{defsnseq} and \eqref{eqsnequi}) so  the proof goes smoothly when using the mixing assumptions. We will focus on these extensions rather than the technical details that are similar to the ideas in \cite{BaLiRo}.

We also observe that next section is dedicated to Theorems \ref{thineq} and  \ref{dsup} whose  proofs follow the lines of the proof of Theorem~\ref{seqmat} but are more complex and technical.
Thus, this section can be seen as a warm-up to Section \ref{sec-proof}.

 
 
We will assume that the system is $\alpha$-mixing with an exponential decay, the $\psi$-mixing case can be easily deduced using the same ideas.

\begin{proof}[Proof of Theorem~\ref{seqmat}-\eqref{eqren1}]

First, for $\eps>0$ and $k_n>0$ let us define

\begin{equation}\label{defsnseq}
S_n(x^1,\dots,x^k)=\sum_{i_1,\dots,i_k =0}^{n-1}\prod_{l=2}^k \mathbbm{1}_{C_{k_n}(\sigma^{i_1} x^1)}(\sigma^{i_l}x^l),
\end{equation}
 and observe that 
\begin{equation}\label{eqsnmnseq}
S_n(x^1,\dots,x^k)\geq 1 \Longleftrightarrow M_n(x^1,\dots,x^k)\geq k_n.
\end{equation}

Next we compute the expectation of $S_n.$
Since $\P$ is a $\sigma$-invariant probability measure we infer that
\begin{align*}
\E(S_n)=&\sum_{i_1,\dots,i_k =0}^{n-1}\int_\Omega\left[\prod_{l=2}^k\int_{\Omega} \mathbbm{1}_{C_{k_n}(\sigma^{i_1} x^1)}(\sigma^{i_l}x^l)d\P(x^l)\right]d\P(x_1)\\
=&\sum_{i_1,\dots,i_k =0}^{n-1}\int_\Omega\P\left({C_{k_n}(\sigma^{i_1} x^1)}\right)^{k-1}d\P(x_1)=n^k\int_\Omega\P\left({C_{k_n}(x^1)}\right)^{k-1}d\P(x_1).
\end{align*}

Using the partition of \emph{$k_n$-cylinders}, we infer that
\begin{eqnarray}
\E(S_n)&=&n^k \sum_{C_{k_n}}  \int_{C_{k_n}} \P\left(C_{k_n}\cap C_{k_n}( x^1)\right)^{k-1}d\P(x^1)=n^k \sum_{C_{k_n}}  \P\left(C_{k_n}\right)^{k}.\label{esperanca}
\end{eqnarray}
Now we are ready to prove \eqref{eqren1}.

Define ${k}_n=\frac{1}{(k-1)\underline{H}_k-\eps}(k\log n+\log\log n).$ From \eqref{eqsnmnseq}, \eqref{esperanca} and Markov's inequality, we find that
\begin{eqnarray*}
\P^k\left(M_n(x^1,\dots,x^k)\geq k_n\right)
\leq \E(S_n)=n^k \sum_{C_{k_n}}  \P\left(C_{k_n}\right)^{k}.
\end{eqnarray*}

By the definition of the lower entropy and the definition of $k_n$, for $n$ large enough, we have
\[\P^k\left(M_n(x^1,\dots,x^k)\geq k_n\right)\leq \frac{1}{\log n}.\]
Finally, choosing a subsequence $n_\ell=\lceil e^{\ell^2}\rceil$, we know that 
\[\P^k\left(M_n(x^1,\dots,x^k)\geq k_{n_\ell}\right)\leq \frac{1}{\log n_\ell}\leq \frac{1}{\ell^2}.\]
Thus $\sum_\ell \P^k\left(M_n(x^1,\dots,x^k)\geq k_{n_\ell}\right)<+\infty$. By Borel-Cantelli Lemma, we know that
\[\frac{\log M_{n_\ell}(x^1,\dots,x^k)}{\log n_\ell}\leq \frac{1}{(k-1)\underline{H}_k-\eps}\left(k+\frac{\log\log n_\ell}{\log n_\ell}\right),\]
for $\P^k$-almost every $(x^1,\dots,x^k)\in\Omega^k$ and $\ell$ large enough.
Since $\eps>0$ can be chosen arbitrarily small we obtain that
\begin{equation*}\underset{\ell\rightarrow+\infty}{\overline\lim}\frac{\log M_{n_\ell}(x^1,\dots,x^k)}{\log n_\ell}\leq \frac{k}{(k-1)\underline{H}_k}.\end{equation*}
Observing that  $(n_\ell)_\ell$ is increasing, $(\log M_n)_n$ is decreasing and $\underset{\ell\rightarrow+\infty}\lim\frac{\log n_\ell}{\log n_{\ell+1}}=1$, we infer that the last inequality holds if we replace $n_l$ by $n$,
and \eqref{eqren1} is proved.
\end{proof}
\begin{proof}[Proof of Theorem~\ref{seqmat}-\eqref{eqren2}]

Let $b<0$ to be choosen later. To prove $\eqref{eqren2}$, we set
\[k_n=\frac{1}{(k-1)\overline{H}_k+\eps}(k\log n+b\log\log n).  \]

From \eqref{eqsnmnseq} and Chebychev's inequality we infer that
\begin{equation}\label{eqvarshseq}
P^k\left(M_n(x^1,\dots,x^k)< k_n\right)= \P^k \left(S_n(x^1,\dots,x^k)=0\right)\leq\frac{\var(S_n)}{\E(S_n)^2}.
\end{equation}
In order to bound $\frac{\var(S_n)}{\E(S_n)^2}=\frac{\E(S_n^2)-\E(S_n)^2}{\E(S_n)^2}$ we need to analyse the term


\begin{eqnarray}
 \E(S_n^2)=\sum_{\substack{i_1,\dots,i_k =0,\dots,n-1\\ i'_1,\dots,i'_k =0,\dots,n-1}}\int_{\Omega^k}\prod_{l=2}^k \mathbbm{1}_{C_{k_n}(\sigma^{i_1} x^1)}(\sigma^{i_l}x^l)\mathbbm{1}_{C_{k_n}(\sigma^{i'_1} x^1)}(\sigma^{i'_l}x^l)d\P^k(x^1,\dots,x^k).\label{eqdefvar}
\end{eqnarray}
 We will split this sum in two cases depending on the relative position of $i_l$ and $i'_l$.

Let $g=g(n)=\log(n^{2k+1})$. First of all, we observe that if $|i_l-i'_l |>g+k_n$ then the $\alpha-$mixing condition gives that

\begin{align}\label{far}
\int_{\Omega} \mathbbm{1}_{C_{k_n}(\sigma^{i_1} x^1)}(\sigma^{i_l}x^l)\mathbbm{1}_{C_{k_n}(\sigma^{i'_1} x^1)}&(\sigma^{i'_l}x^l)d\P(x^l)\nonumber\\
&\leq\alpha(g)+\P\left(C_{k_n}(\sigma^{i_1} x^1)\right)\P\left( C_{k_n}(\sigma^{i'_1} x^1)\right).
\end{align}

If otherwise  $|i_l-i'_l |\leq g+k_n$ then H\"older's inequality infers that
\begin{eqnarray}\label{ineqholderseq}
\int_{\Omega} \mathbbm{1}_{C_{k_n}(\sigma^{i_1} x^1)}(\sigma^{i_l}x^l)\mathbbm{1}_{C_{k_n}(\sigma^{i'_1} x^1)}(\sigma^{i'_l}x^l)d\P(x^l)
\leq\P\left(C_{k_n}(\sigma^{i_1} x^1)\right)^{1/2}\P\left( C_{k_n}(\sigma^{i'_1} x^1)\right)^{1/2}.
\end{eqnarray}

Now suppose that $i_l-i'_l >g+k_n$ for every $l\in\{1,\dots,k\}$ (the case $i'_l-i_l >g+k_n$ can be treated identically). From \eqref{far} we find  that
\begin{eqnarray}
& &\int_{\Omega^k}\prod_{l=2}^k \mathbbm{1}_{C_{k_n}(\sigma^{i_1} x^1)}(\sigma^{i_l}x^l)\mathbbm{1}_{C_{k_n}(\sigma^{i'_1} x^1)}(\sigma^{i'_l}x^l)d\P^k(x^1,\dots,x^k)\label{ineqallmixseq}\\
&&=\int_\Omega\left[\prod_{l=2}^k\int_{\Omega} \mathbbm{1}_{C_{k_n}(\sigma^{i_1} x^1)}(\sigma^{i_l}x^l)\mathbbm{1}_{C_{k_n}(\sigma^{i'_1} x^1)}(\sigma^{i'_l}x^l)d\P(x^l)\right]d\P(x_1)\nonumber\\
&&\leq\int_\Omega \left(\alpha(g)+\P\left(C_{k_n}(\sigma^{i_1} x^1)\right)\P\left( C_{k_n}(\sigma^{i'_1} x^1)\right)\right)^{k-1}d\P(x^1)\nonumber\\
&&\leq(2^{k-1}-1)\alpha(g)+\int_\Omega \P\left(C_{k_n}(\sigma^{i_1} x^1)\right)^{k-1}\P\left( C_{k_n}(\sigma^{i'_1} x^1)\right)^{k-1}d\P(x^1).\nonumber
\end{eqnarray}

To conclude the first case we use the partition $\{C_{k_n}\cap \sigma^{-(i_1 -i'_1)}C_{k_n}' \}_{C_{k_n},C'_{k_n}}$  of $\Omega$ to infer that
\begin{eqnarray}
& &\int_\Omega \P\left(C_{k_n}(\sigma^{i_1} x^1)\right)^{k-1}\P\left( C_{k_n}(\sigma^{i'_1} x^1)\right)^{k-1}d\P(x^1)\label{ineqallmixseq1}\\
&&=\sum_{C_{k_n},C'_{k_n}}\int_{C_{k_n}\cap \sigma^{-(i_1 -i'_1)}C_{k_n}' } \P\left(C_{k_n}(\sigma^{i_1-i'_1} x^1)\right)^{k-1}\P\left( C_{k_n}(x^1)\right)^{k-1}d\P(x^1)\nonumber\\
&&=\sum_{C_{k_n},C'_{k_n}}\P(C_{k_n}\cap \sigma^{-(i_1 -i'_1)}C_{k_n}' ) \P\left(C_{k_n}\right)^{k-1}\P\left( C'_{k_n}\right)^{k-1}\leq \alpha(g)+\left(\sum_{C_{k_n}} \P\left(C_{k_n}\right)^{k}\right)^2.\nonumber
\end{eqnarray}

Next, for $p\in\{1,\dots,k\}$, assume that we have $p$ pairs of close indices and $k-p$ pairs of distant indices. We will firstly treat the case where $|i_1-i'_1 |\leq g+k_n$. Without loss of generality we can assume $|i_2-i'_2 |\leq g+k_n$,\dots,$|i_p-i'_p |\leq g+k_n$,
 $|i_{p+1}-i'_{p+1} |> g+k_n$,\dots,\\$|i_{k}-i'_{k} |> g+k_n$. From \eqref{far} and \eqref{ineqholderseq} we deduce that

\begin{eqnarray}
& &\int_{\Omega^k}\prod_{l=2}^k \mathbbm{1}_{C_{k_n}(\sigma^{i_1} x^1)}(\sigma^{i_l}x^l)\mathbbm{1}_{C_{k_n}(\sigma^{i'_1} x^1)}(\sigma^{i'_l}x^l)d\P^k(x^1,\dots,x^k)\label{ineqmixhol}\\
&&\leq\int_\Omega\left(\alpha(g)+\P\left(C_{k_n}(\sigma^{i_1} x^1)\right)\P\left( C_{k_n}(\sigma^{i'_1} x^1)\right)\right)^{k-p}\times\nonumber\\
&&\hspace{2cm}\times\left(\P\left(C_{k_n}(\sigma^{i_1} x^1)\right)^{1/2}\P\left( C_{k_n}(\sigma^{i'_1} x^1)\right)^{1/2}\right)^{p-1}d\P(x^1)\nonumber\\
&&\leq(2^{k-p}-1)\alpha(g)+\int_\Omega\P\left(C_{k_n}(\sigma^{i_1} x^1)\right)^{k-(p+1)/2}\P\left( C_{k_n}(\sigma^{i'_1} x^1)\right)^{k-(p+1)/2}d\P(x^1).\nonumber
\end{eqnarray}
Using H\"older's inequality and the invariance of $\P$, we obtain 
\begin{eqnarray}
& &\int_\Omega\P\left(C_{k_n}(\sigma^{i_1} x^1)\right)^{k-(p+1)/2}\P\left( C_{k_n}(\sigma^{i'_1} x^1)\right)^{k-(p+1)/2}d\P(x^1)\nonumber\\
&&\leq\int_\Omega\P\left(C_{k_n}(x^1)\right)^{2k-(p+1)}d\P(x^1)
=\sum_{C_{k_n}} \P\left(C_{k_n}\right)^{2k-p}
\leq\left(\sum_{C_{k_n}} \P\left(C_{k_n}\right)^{k}\right)^{(2k-p)/k},\label{ineqsubba}
\end{eqnarray}
where the last inequality came from the fact that $x\mapsto x^{k/(p+k)}$ is a countably subadditive function.

If $|i_1-i'_1 |> g+k_n$ then since we have $p\geq 1$ pairs of close indices, 
 there exists at least one $j\in\{2,\dots,k\}$ such that $|i_j-i'_j |\leq g+k_n$. In this case, the estimations \eqref{ineqmixhol}  and \eqref{ineqsubba} could be done similarly using the following equivalent definition of $S_n$
\begin{equation}\label{eqsnequi}
S_n(x^1,\dots,x^k)=\sum_{i_1,\dots,i_k =0}^{n-1}\prod_{\substack{l=1 \\ l\neq j}}^k \mathbbm{1}_{C_{k_n}(\sigma^{i_j} x^j)}(\sigma^{i_l}x^l).
\end{equation}

Now, gathering the estimates \eqref{eqdefvar} and \eqref{ineqallmixseq}- \eqref{ineqsubba} we  conclude that
\begin{eqnarray}
\var(S_n)
&\leq& n^{2k}3^{k}\alpha(g)+\sum_{p=1}^{k}\left[\binom{k}{p}n^{2k-p}(g+k_n)^p\left(\sum_{C_{k_n}} \P\left(C_{k_n}\right)^{k}\right)^{(2k-p)/k}\right]\nonumber\\
&=&n^{2k}3^{k}\alpha(g)+\sum_{p=1}^{k}\left[\binom{k}{p}(g+k_n)^p\left(\E(S_n)\right)^{(2k-p)/k}\right].
\label{ineqvarfinseq}
\end{eqnarray}
Thus, \eqref{ineqvarfinseq} together with \eqref{eqvarshseq} gives us
\begin{equation*}
\P^k\left(M_n(x^1,\dots,x^k)< k_n\right)\leq\frac{n^{2k}3^{k}\alpha(g)}{\E(S_n)^2}+\sum_{p=1}^{k}\frac{\binom{k}{p}(g+k_n)^p}{\left(\E(S_n)\right)^{p/k}}.
\end{equation*}
By the definitions of $k_n$ and \eqref{esperanca}, we observe that for $n$ large enough we have
$\E(S_n)\geq(\log n)^{-b},$
and since  $g=\log\left(n^{2k+1}\right)$, we infer that
\[\frac{n^{2k}3^{k}\alpha(g)}{\E(S_n)^2}=\mathcal{O}\left(\frac{1}{\log n}\right).\]
We can choose $b\ll-1$ so that
\[\P^k\left(M_n(x^1,\dots,x^k)< k_n\right)=\mathcal{O}\left(\frac{1}{\log n}\right).\]
To conclude the proof it suffices to take a subsequence $n_\ell$ and use Borel-Cantelli Lemma as in the proof of \eqref{eqren1}.

\end{proof}

\begin{proof}[Proof of Theorem~\ref{discrete}]
The proof follows the line of the proof of Theorem~\ref{seqmat}, replacing $S_n$ by
\begin{equation*}
S_n^f(x^1,\dots,x^k)=\sum_{i_1,\dots,i_k =0}^{n-1}\prod_{l=2}^k \mathbbm{1}_{f^{-1}C_{k_n}(f(\sigma^{i_1} x^1))}(\sigma^{i_l}x^l).
\end{equation*}
Moreover, since $f$ can modify the length of cylinders, while using the mixing property, one need to use assumption (HC) and $\alpha(g)$ must be replaced by $\alpha(g+k_n-h_n)$.
\end{proof}

\section{Proofs of the main results}\label{sec-proof}
In this section we adapt  the proof of Theorem~\ref{seqmat} for multiple orbits (Theorems \ref{thineq} and \ref{dsup}).
In order to do that, one must replace $M_n$ by $-\log m_n$ and the cylinders $C_k(x)$ by balls $B(x, e^{-k})$. However, one major drawback is that for cylinders we have that $x\in C_n(y)$ implies that $C_n(y)=C_n(x)$ but, when working with balls, $x\in B(y,r)$ does not implies that $B(y,r)=B(x,r)$. 
This simple fact prohibits us to define $S_n$ as in the previous section, in particular in view of \eqref{eqsnequi}. To overcome this problem we will need to define $S_n$ as
\begin{equation}\label{Sn}S_n(x_1,\dots,x_k)=\sum_{i_1,\dots,i_k =0}^{n-1}\prod_{j=1}^{k-1} \prod_{l=j+1}^k \mathbbm{1}_{B(T^{i_j}x_j,r_n)}(T^{i_l}x_l)\end{equation}
which will complexify our proofs. In particular, we will need to use the following lemma in the proof of both theorems.
\begin{lemma}\label{lemmagendim}
\begin{equation*}(k-1)\underline{D}_k(\mu)=\underset{r\rightarrow0}{\underline\lim}\frac{\log\int_{X^k}\prod_{j=1}^{k-1} \prod_{l=j+1}^k \mathbbm{1}_{B(x_j,r)}(x_l) d\mu^k(1,k)}{\log r}
\end{equation*}
and
\begin{equation*}(k-1)\overline{D}_k(\mu)=\underset{r\rightarrow0}{\overline\lim}\frac{\log\int_{X^k}\prod_{j=1}^{k-1} \prod_{l=j+1}^k \mathbbm{1}_{B(x_j,r)}(x_l) d\mu^k(1,k)}{\log r}
\end{equation*}
\end{lemma}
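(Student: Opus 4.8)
The plan is to reduce the multi-product integral in Lemma~\ref{lemmagendim} to the single integral $\int_X \mu(B(x,r))^{k-1}\,d\mu(x)$ appearing in the definition of $\underline{D}_k(\mu)$ and $\overline{D}_k(\mu)$, by sandwiching the former between two constant multiples of powers of the latter (up to distortion in the radius). Write
\[
I(r)=\int_{X^k}\prod_{j=1}^{k-1}\prod_{l=j+1}^k \mathbbm{1}_{B(x_j,r)}(x_l)\,d\mu^k(1,k).
\]
The first step is the easy lower bound: if $d(x_i,x_j)<r$ for the specific pattern where every point lies in $B(x_1,r/2)$, then all pairwise conditions $x_l\in B(x_j,r)$ hold. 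Concretely, $\prod_{j<l}\mathbbm{1}_{B(x_j,r)}(x_l)\ge \prod_{l=2}^k \mathbbm{1}_{B(x_1,r/2)}(x_l)$ by the triangle inequality, so integrating out $x_2,\dots,x_k$ first gives $I(r)\ge \int_X \mu(B(x_1,r/2))^{k-1}\,d\mu(x_1)$. The second step is the upper bound: dropping all factors except those with $j=1$ gives $\prod_{j<l}\mathbbm{1}_{B(x_j,r)}(x_l)\le \prod_{l=2}^k\mathbbm{1}_{B(x_1,r)}(x_l)$, hence $I(r)\le \int_X \mu(B(x_1,r))^{k-1}\,d\mu(x_1)$. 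Thus
\[
\int_X \mu(B(x,r/2))^{k-1}\,d\mu(x)\ \le\ I(r)\ \le\ \int_X \mu(B(x,r))^{k-1}\,d\mu(x).
\]

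Next I would take logarithms, divide by $\log r$ (note $\log r<0$ for small $r$, so inequalities flip appropriately), and pass to the $\liminf$ and $\limsup$ as $r\to 0$. On the right, $\frac{\log\int_X\mu(B(x,r))^{k-1}d\mu}{\log r}=(k-1)\cdot\frac{\log\int_X\mu(B(x,r))^{k-1}d\mu}{(k-1)\log r}$, whose $\liminf$ and $\limsup$ are exactly $(k-1)\underline{D}_k(\mu)$ and $(k-1)\overline{D}_k(\mu)$ by definition. On the left, the substitution $r\mapsto 2r$ (equivalently, noting $\log(r/2)=\log r-\log 2$ and $\log 2/\log r\to 0$) shows that replacing $r/2$ by $r$ changes the limit quantities by nothing: $\lim_{r\to 0}\frac{\log(r/2)}{\log r}=1$. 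Hence both the upper and lower bounds have the same $\liminf$ (resp.\ $\limsup$) as $r\to 0$, namely $(k-1)\underline{D}_k(\mu)$ (resp.\ $(k-1)\overline{D}_k(\mu)$), and the squeeze gives the claimed identities.

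I expect no serious obstacle here; the only points requiring a little care are (i) making sure the triangle-inequality comparisons are stated with the right constants (using $B(x_1,r/2)$ on the lower side so that $x_l,x_{l'}\in B(x_1,r/2)$ forces $d(x_l,x_{l'})<r$, covering the pairs with $j\ge 2$ as well), and (ii) handling the sign of $\log r$ and the harmless factor $\log 2/\log r\to 0$ correctly when splitting $\liminf/\limsup$. One should also remark that if one of these limits is $+\infty$ the identity still holds trivially by the same inequalities, and that the integrands are measurable and everything is finite for $r$ below the relevant threshold, so the manipulations are legitimate. This lemma is exactly the bridge needed so that the sum $S_n$ defined in \eqref{Sn}, whose expectation will involve $I(r_n)$, can be controlled in terms of $\underline{D}_k(\mu)$ and $\overline{D}_k(\mu)$ in the proofs of Theorems~\ref{thineq} and~\ref{dsup}.
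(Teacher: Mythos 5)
Your proof is correct and follows essentially the same route as the paper: drop the factors with $j\ge 2$ for the upper bound, use $B(x_1,r/2)$ and the triangle inequality for the lower bound, and pass to $\liminf/\limsup$ using $\log(r/2)/\log r\to 1$.
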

\begin{proof}
First of all, one can observe that for every $(x_1,\dots,x_k)\in X^k$
\[
\prod_{j=1}^{k-1} \prod_{l=j+1}^k \mathbbm{1}_{B(x_j,r)}(x_l) \leq  \prod_{l=2}^k \mathbbm{1}_{B(x_1,r)}(x_l). \]
Thus, 
\begin{eqnarray}
\int_{X^k}\prod_{j=1}^{k-1} \prod_{l=j+1}^k \mathbbm{1}_{B(x_j,r)}(x_l)d\mu^k(1,k) 
\leq\int_X\mu\left(B\left(x,r\right)\right)^{k-1}d\mu(x) \label{ineqd1}.
\end{eqnarray}
Moreover, one can observe that if $\{x_i,x_j \}\subset B(x_1,r/2)$  then $x_i \in B(x_j,r)$. Therefore
\[
\prod_{l=2}^k \mathbbm{1}_{B(x_1,r/2)}(x_l) \leq\prod_{j=1}^{k-1} \prod_{l=j+1}^k \mathbbm{1}_{B(x_j,r)}(x_l) \]
for every $(x_1,\dots,x_k)\in X^k$, which implies that
\begin{equation}\label{ineqd2}
\int_X\mu\left(B\left(x,\frac{r}{2}\right)\right)^{k-1}d\mu(x)\leq \int_{X^k}\prod_{j=1}^{k-1} \prod_{l=j+1}^k \mathbbm{1}_{B(x_j,r)}(x_l)d\mu^k(1,k).
\end{equation}
Using \eqref{ineqd1} and \eqref{ineqd2} and the fact that $\underset{r\rightarrow 0}{\lim}\frac{\log(r/2)}{\log r}=1$ we get the result.
\end{proof}

\begin{proof}[Proof of Theorem \ref{thineq}]
As in the proof of Theorem~\ref{seqmat}-\eqref{eqren2} it suffices to show that
\[\mu^k \left(m_n(x_1,\dots,x_k)< r_n\right)=\mathcal{O}\left(\frac{1}{\log n}\right).\]

For $\eps>0$, let us define 
\[k_n=\frac{1}{(k-1)\underline{D}_k(\mu)-\eps}(k\log n+\log\log n) \quad \text{and} \quad r_n=e^{-k_n}.\]
Defining $S_n(x_1,\dots,x_k)$ as in \eqref{Sn}, it is easy to see that
 for every $(x_1,\dots,x_k)\in X^k$
\begin{equation}\label{eqMnSn}
m_n(x_1,\dots,x_k)< r_n\Longleftrightarrow S_n(x_1,\dots,x_k)\geq 1,\end{equation}
where $m_n$ was defined in \eqref{mn}. 
Then, from \eqref{eqMnSn} and Markov's inequality, we deduce that
\begin{eqnarray*}
\mu^k \left(m_n(x_1,\dots,x_k)< r_n\right)&\leq& \E(S_n)=\\
&=&\sum_{i_1,\dots,i_k =0}^{n-1}\int_{X^k}\prod_{j=1}^{k-1} \prod_{l=j+1}^k \mathbbm{1}_{B(x_j,r_n)}(x_l)d\mu^k(1,k)\\
&\leq & n^k\int_{X^k}\prod_{j=1}^{k-1} \prod_{l=j+1}^k \mathbbm{1}_{B(x_j,r_n)}(x_l)d\mu^k(1,k),
\end{eqnarray*}
since $\mu$ is invariant.

By Lemma \ref{lemmagendim} and the definition of $k_n$, for $n$ large enough, we infer that
\[\mu^k \left(m_n(x_1,\dots,x_k)< r_n\right)\leq n^kr_n^{(k-1)\underline{D}_k(\mu)-\eps}=\frac{1}{\log n},\]
and this is the desired conclusion.
\end{proof}

Before proving Theorem \ref{dsup} we state a few facts in order to 
 simplify the calculations.
At first let us recall the notion of $(\lambda,r)$-grid partition.
\begin{definition}
Let $0<\lambda<1$ and $r>0$. A partition $\{Q_i\}_{i=1}^{\infty}$ of $X$ is called a $(\lambda,r)$-grid partition if there exists a sequence $\{y_i\}_{i=1}^{\infty}$ such that for any $i\in\N$
\[B(y_i,\lambda r)\subset Q_i\subset B(y_i, r).\]
\end{definition}
The following technical lemma will be used during the proof. One can observe that in the symbolic case, this lemma corresponds to \eqref{ineqsubba}. Moreover, this lemma is a generalization of Lemma 14 in \cite{BaLiRo}.
\begin{lemma}\label{lema1}
Let $p\in \{1,\dots, k-1\}$. Under the hypotheses of Theorem \ref{dsup}, there exists a constant $K>0$ such that for $n$ large enough
\begin{align*}
\int_{X^{k-p}}\prod_{j=p+1}^{k-1} &\prod_{l=j+1}^k \mathbbm{1}_{B(x_j,r_n)}(x_l)\left(\int_{X^p}\prod_{j=1}^{p}\prod_{l=j+1}^{k}  \mathbbm{1}_{B(x_j,r_n)}(x_l)d\mu^p(1,p)\right)^2d\mu^{k-p}({p+1},k)\\
&\leq K\left(\int_{X^k}\prod_{j=1}^{k-1} \prod_{l=j+1}^k \mathbbm{1}_{B(x_j,r_n)}(x_l)d\mu^k(1,k)\right)^{(p+k)/k}=K\left(\frac{\E(S_n)}{n^k}\right)^{\frac{p+k}{k}},
\end{align*}
where $d\mu^{j-i+1}(i,j)$ denotes $d\mu^{j-i+1}(x_i,\dots,x_{j}),\ \text{for}\ i<j.$
\end{lemma}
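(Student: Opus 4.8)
The plan is to mimic the symbolic estimate \eqref{ineqsubba}, whose essential content is that integrating a product of $k-1$ indicator constraints against $\mu$ and then squaring one factor costs, in the worst case, an exponent $(p+k)/k$ rather than $2$, because the squared factor shares the "first" point $x_1$ with the unsquared factors. First I would unwind the left-hand side: write the inner integral $\Phi(x_1,\dots,x_p):=\int_{X^p}\prod_{j=1}^{p}\prod_{l=j+1}^{k}\mathbbm{1}_{B(x_j,r_n)}(x_l)\,d\mu^p(1,p)$ and notice that the square $\Phi^2$ can be expanded as an integral over a second, independent copy of the first $p$ coordinates, say $x_1',\dots,x_p'$. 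The crucial geometric observation — the analogue of $x\in C_n(y)\Rightarrow C_n(y)=C_n(x)$ failing for balls, which is exactly why this lemma is harder than the symbolic case — is handled as in Lemma \ref{lemmagendim}: after losing a factor $2$ in the radii one may replace the product of pairwise constraints $\prod_{j<l}\mathbbm{1}_{B(x_j,r_n)}(x_l)$ by the "star" constraint $\prod_{l}\mathbbm{1}_{B(x_1,r_n/2)}(x_l)$, and conversely bound it above by $\prod_{l}\mathbbm{1}_{B(x_1,2r_n)}(x_l)$. This lets me reduce the whole left-hand side, up to constants depending only on $k$ and up to the harmless replacement $r_n\mapsto cr_n$, to something of the shape $\int_X \mu(B(x,cr_n))^{\,k-p-1}\cdot\mu(B(x,cr_n))^{\,p}\cdot\mu(B(x,cr_n))^{\,p}\,d\mu(x)=\int_X\mu(B(x,cr_n))^{\,2p+k-p-1}d\mu(x)=\int_X\mu(B(x,cr_n))^{\,p+k-1}d\mu(x)$, where the three groups of exponents come respectively from the $k-p$ distant points constrained through $x_{p+1},\dots$ (rewritten through the shared center), and from the two independent copies of the $p$ close points.

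Next I would compare $\int_X\mu(B(x,cr_n))^{p+k-1}\,d\mu(x)$ with $\left(\int_X\mu(B(x,cr_n))^{k-1}\,d\mu(x)\right)^{(p+k)/k}$. This is precisely a reverse Hölder / subadditivity step: writing $\Lambda:=k-1$ and noting $p+k-1 = \Lambda\cdot\frac{p+k}{k}$ (since $\frac{p+k}{k}(k-1)=k-1+\frac{p(k-1)}{k}$... ) — more cleanly, I use the elementary inequality $\sum_i a_i^{\,s}\le\left(\sum_i a_i^{\,t}\right)^{s/t}$ for $0<t\le s$ applied to a $(\lambda,r_n)$-grid partition $\{Q_i\}$ of $X$: choose a grid partition, bound $\int_X\mu(B(x,cr_n))^{m}\,d\mu(x)$ above and below by sums $\sum_i\mu(Q_i)\,\mu(cQ_i)^{m}$ comparable to $\sum_i\mu(Q_i)^{m+1}$ (up to constants and an inessential dilation of the radius, absorbed by the tightness of $X$ and by $\lim_{r\to0}\frac{\log(cr)}{\log r}=1$), and then apply the subadditivity inequality with $s=p+k$, $t=k$ to the numbers $\mu(Q_i)$. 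This yields $\sum_i\mu(Q_i)^{p+k}\le\left(\sum_i\mu(Q_i)^{k}\right)^{(p+k)/k}$, hence $\int_X\mu(B(x,cr_n))^{p+k-1}d\mu\lesssim\left(\int_X\mu(B(x,cr_n))^{k-1}d\mu\right)^{(p+k)/k}$, and then Lemma \ref{lemmagendim} (applied in both directions, to the left side with the genuine pairwise-product integrand and to the right side likewise) converts this back into the claimed bound in terms of $\int_{X^k}\prod_{j<l}\mathbbm{1}_{B(x_j,r_n)}(x_l)\,d\mu^k$, which by the computation in the proof of Theorem \ref{thineq} equals $\E(S_n)/n^k$. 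Absorbing all the multiplicative constants (which depend only on $k$, on the tightness constant $N_0$, and on the grid ratio $\lambda$) into a single constant $K$ gives the statement.

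The main obstacle I anticipate is the bookkeeping around the "$x\in B(y,r)\not\Rightarrow B(x,r)=B(y,r)$" issue: one must be careful that each comparison step (star vs.\ pairwise product, grid partition vs.\ metric balls, the two independent copies of the close block) dilates or contracts the radius by only a bounded factor, so that after finitely many such steps the radius is still $c r_n$ with $c$ a constant, and the logarithmic normalization in Lemma \ref{lemmagendim} is unaffected because $\log(cr_n)/\log r_n\to1$. A secondary point requiring care is that the subadditivity exponent inequality needs $0<k\le p+k$, which holds for all $p\ge0$, and that one should pick $t=k$ rather than, say, $t=k-1$, so that the output exponent matches $(p+k)/k$ exactly; choosing a $(\lambda,r_n)$-grid partition (which exists whenever $X$ is tight, by the standard greedy construction) is what makes the passage between the integral $\int_X\mu(B(x,cr_n))^m d\mu$ and the discrete sum $\sum_i\mu(Q_i)^{m+1}$ legitimate.
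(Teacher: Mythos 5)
Your overall strategy -- reduce the left side to a single integral of $\mu(B(x,\cdot))^{p+k-1}$, discretize with a tight grid partition, apply the subadditivity inequality $\sum_i a_i^{\,s}\le(\sum_i a_i^{\,t})^{s/t}$, and convert back -- is precisely the paper's plan, and the middle of your argument (the grid-plus-subadditivity step) is essentially identical to what the paper does. The problem is in the conversion back, where you do have a real gap.

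After subadditivity you have $\left(\sum_i\mu(Q_i)^k\right)^{(p+k)/k}$, and you must produce a bound $\sum_i\mu(Q_i)^k\le K'\int_{X^k}\prod_{j<l}\mathbbm{1}_{B(x_j,r_n)}(x_l)\,d\mu^k$ \emph{with radius exactly $r_n$}, because the right-hand side of the lemma is pinned to $r_n$ through $\E(S_n)/n^k$. Your detour through $\int_X\mu(B(x,c'r_n))^{k-1}\,d\mu$ produces, with the $(\lambda,r_n)$-grid you propose, $c'=2$ (a grid cell has diameter $\le 2r_n$), while the pointwise inequality \eqref{ineqd2} from the proof of Lemma~\ref{lemmagendim} only gives $\int_X\mu(B(x,r_n/2))^{k-1}\,d\mu\le\int_{X^k}\prod_{j<l}\mathbbm{1}_{B(x_j,r_n)}(x_l)\,d\mu^k$. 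Closing the gap between radius $2r_n$ and radius $r_n/2$ would require a measure-doubling property, which tightness (a purely topological covering condition) does not supply, and the remark that $\log(cr_n)/\log r_n\to 1$ is out of place here: that rescues the asymptotic dimension comparison, not the fixed-constant pointwise inequality the lemma asserts. The paper avoids the detour entirely: with a $(\lambda,r_n/2)$-grid each $Q_i$ has diameter $\le r_n$, so if $\{x_1,\dots,x_k\}\subset Q_i$ then \emph{all} pairwise distances are at most $r_n$, hence $\sum_i\mu(Q_i)^k=\sum_i\int_{X^k}\prod_{l=1}^k\mathbbm{1}_{Q_i}(x_l)\,d\mu^k\le\int_{X^k}\prod_{j<l}\mathbbm{1}_{B(x_j,r_n)}(x_l)\,d\mu^k$ with no dilation at all. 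A smaller point in the same spirit: your opening reduction via the ``star'' bound $\prod_{l}\mathbbm{1}_{B(x_1,2r_n)}(x_l)$ also dilates unnecessarily -- for an upper bound one simply drops factors (never recenters a ball), so the paper reaches $\int_X\mu(B(x,r_n))^{p+k-1}\,d\mu$ at radius $r_n$ rather than $2r_n$.
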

\begin{proof}

By Proposition 2.1 in \cite{GY}, there exist $0<\lambda<\frac12$ and $R>0$ such that for any $0<r<R$ there exists a $(\lambda,r)$-grid partition.

Given $r_0$ as in definition~\ref{deftight} let us choose $n$ large enough so that $r_n<\min\{R,r_0/2\}$. Let $\{Q_i\}_{i=1}^{\infty}$ be a $(\lambda,\frac{r_n}{2})$-grid partition and $\{y_i\}_{i=1}^\infty$ be such that $$B\left(y_i,\lambda \frac{r_n}{2}\right)\subset Q_i
\subset B\left(y_i, \frac{r_n}{2}\right).$$

Using this partition we infer that
\begin{eqnarray}
& &\int_{X^{k-p}}\prod_{j=p+1}^{k-1} \prod_{l=j+1}^k \mathbbm{1}_{B(x_j,r_n)}(x_l)\left(\int_{X^p}\prod_{j=1}^{p}\prod_{l=j+1}^{k}  \mathbbm{1}_{B(x_j,r_n)}(x_l)d\mu^p(1,p)\right)^2d\mu^{k-p}({p+1},k)\nonumber\\
&&\leq \int_{X^{k-p}} \prod_{l=p+2}^k \mathbbm{1}_{B(x_{p+1},r_n)}(x_l)\left(\int_{X^p}\prod_{j=1}^{p}  \mathbbm{1}_{B(x_{j},r_n)}(x_{p+1})d\mu^p(1,p)\right)^2d\mu^{k-p}(p+1,k)\nonumber\\
&&= \int _X \mu\left(B(x_{p+1},r_n)\right)^{p+k-1}d\mu(x_{p+1})=\sum_{i}\int_{Q_i} \mu\left(B(x_{p+1},r_n)\right)^{p+k-1}d\mu(x_{p+1}).\label{ineqlemdim1}
\end{eqnarray}


Now, for $i$ fixed, there exist $k_i$ elements $\{Q_{i,j}\}_{j=1}^{k_i}$ of the partition $\{Q_k\}_{k=1}^{\infty}$ such that $Q_{i,j} \cap B(y_i,2r_n) \neq \emptyset$ for $j=1,...,k_i.$
 Since the space is tight, there exists a constant $K_0$ depending only on $N_0$ such that  $k_i\leq K_0$ (see e.g. the proof of Theorem 4.1 in \cite{GY}). Defining $Q_{i,j}=\emptyset$ for $k_i<j\leq K_0$ we infer that
\begin{equation}\label{a1}\bigcup_{x_{p+1} \in Q_i}B(x_{p+1},r_n)\subset B(y_i,2r_n) \subset\bigcup_{j=1}^{K_0}Q_{i,j}.\end{equation}

From  \eqref{a1} we know that
\begin{eqnarray*}
& &\sum_{i}\int_{Q_i} \mu\left(B(x_{p+1},r_n)\right)^{p+k-1}d\mu(x_{p+1})
\leq\sum_{i}\int_{Q_i}\left(\sum_{j=1}^{K_0}\mu\left(Q_{i,j}\right)\right)^{p+k-1}d\mu(x_{p+1})\\
&=&\sum_{i}\mu(Q_i)\left(\sum_{j=1}^{K_0}\mu\left(Q_{i,j}\right)\right)^{p+k-1}
\leq\sum_{i}\left(\sum_{j=1}^{K_0}\mu\left(Q_{i,j}\right)\right)^{p+k}\leq K_0^{p+k-1}\sum_{i}\sum_{j=1}^{K_0}\mu\left(Q_{i,j}\right)^{p+k},
\end{eqnarray*}
where the last inequality is deduced from Jensen's inequality.
Now, since the elements $Q_{i,j}$ cannot participate in more than $K_0$ different sums (one can see the arguments leading to (12) in \cite{GY}) and since $x\mapsto x^{k/(p+k)}$ is a countably subadditive function, we infer that
\begin{eqnarray}
& &\sum_{i}\int_{Q_i} \mu\left(B(x_{p+1},r_n)\right)^{p+k-1}d\mu(x_{p+1})
\leq K_0^{p+k}\sum_{i}\mu\left(Q_{i}\right)^{p+k}\nonumber\\
&&\leq K_0^{p+k}\left(\sum_{i}\mu\left(Q_{i})\right)^k\right)^{(p+k)/k}=K_0^{p+k}\left(\sum_{i}\int_{X^k}\prod_{l=1}^k\mathbbm{1}_{Q_i}(x_l) d\mu^k(1,k)\right)^{(p+k)/k}.\label{ineqlemdim2}
\end{eqnarray}
Note that for any $y\in Q_i$, we have $Q_i\subset B(y,r_n)$. Thus, if $\{x_1,\cdots,x_k \}\subset Q_i$, then we have $x_l\in B(x_j, r_n)$ for any $j,l=1,\dots,k$ and we conclude that
\begin{equation}\label{ineqlemdim3}\sum_{i}\int_{X^k}\prod_{l=1}^k\mathbbm{1}_{Q_i}(x_l) d\mu^k(1,k)\leq\int_{X^k}\prod_{j=1}^{k-1} \prod_{l=j+1}^k \mathbbm{1}_{B(x_j,r_n)}(x_l)d\mu^k(1,k).
\end{equation}
Finally, \eqref{ineqlemdim1}, \eqref{ineqlemdim2} and \eqref{ineqlemdim3} give us
\begin{multline}\nonumber
\int_{X^{k-p}}\prod_{j=p+1}^{k-1} \prod_{l=j+1}^k \mathbbm{1}_{B(x_j,r_n)}(x_l)\left(\int_{X^p}\prod_{j=1}^{p}\prod_{l=j+1}^{k}  \mathbbm{1}_{B(x_j,r_n)}(x_l)d\mu^p(1,p)\right)^2d\mu^{k-p}(p+1,k)\\
\leq K_0^{p+k}\left(\int_{X^k}\prod_{j=1}^{k-1} \prod_{l=j+1}^k \mathbbm{1}_{B(x_j,r_n)}(x_l)d\mu^k(1,k)\right)^{(p+k)/k}.
\end{multline}
 and the result follows with $K=K_0^{p+k}$. 
\end{proof}


We are now ready to prove Theorem \ref{dsup}.
\begin{proof}[Proof of Theorem \ref{dsup}]
Without loss of generality, we will assume in the proof that $\theta_n=e^{-n}$.

For $\eps>0$, let us define 
\[k_n=\frac{1}{(k-1)\overline{D}_k(\mu)+\eps}(k\log n+b\log\log n)  \quad \text{and} \quad r_n=e^{-k_n}.\]
Using the same notation as in the proof of Theorem \ref{thineq}, we recall that
\begin{equation}\label{eqexp}\E(S_n)=n^k\int_{X^k}\prod_{j=1}^{k-1} \prod_{l=j+1}^k \mathbbm{1}_{B(x_j,r_n)}(x_l)d\mu^k(1,k).\end{equation}
To simplify our equations, from now on, we will denote by $B(x_j)$ the set $B(x_j,r_n).$

Using \eqref{eqMnSn} and Chebyshev's inequality, we obtain
\begin{equation}\label{eqmnvarsn}
\mu^k \left(m_n(x_1,\dots,x_k)\geq r_n\right)\leq\mu^k \left(S_n(x_1,\dots,x_k)=0\right)\leq
\frac{\var(S_n)}{\E(S_n)^2}.
\end{equation}
Thus, we need to control the variance of $S_n$. First of all, we have
\begin{equation*}
\var(S_n)
=\sum_{\substack{i_1,\dots,i_k =0,\dots,n-1\\ i'_1,\dots,i'_k =0,\dots,n-1}}\int_{X^k}\prod_{j=1}^{k-1} \prod_{l=j+1}^k \mathbbm{1}_{B(T^{i_j}x_j)}(T^{i_l}x_l)  \mathbbm{1}_{B(T^{i'_j}x_j)}(T^{i'_l}x_l)d\mu^k(1,k)-\E(S_n)^2.
\end{equation*}
Let $g=g(n)=\log(n^{\gamma})$ where $\gamma>0$ will be defined later. 

We will split the last sum depending on the relative position of $i_l$ and $i'_l$. Without loss of generality we can always suppose
$i_l>i'_l$.

We first consider the case $i_1-i'_1> g,\dots, i_k-i'_k> g.$

Since $i_1-i'_1>g$ then by (H1) and (H2),
\begin{eqnarray}
& &\int_{X^k}\prod_{j=1}^{k-1} \prod_{l=j+1}^k \mathbbm{1}_{B(T^{i_j}x_j)}(T^{i_l}x_l)  \mathbbm{1}_{B(T^{i'_j}x_j)}(T^{i'_l}x_l)d\mu^k(1,k)\label{eqmix1}\\
&=&\int_{X^{k-1}}\prod_{j=2}^{k-1} \prod_{l=j+1}^k \mathbbm{1}_{B(T^{i_j}x_j)}(T^{i_l}x_l)  \mathbbm{1}_{B(T^{i'_j}x_j)}(T^{i'_l}x_l)
\times\nonumber\\
&&\hspace{0.5cm}\times\left[\int_X \prod_{l=2}^k \mathbbm{1}_{B(T^{i_1-i'_1}x_1)}(T^{i_l}x_l)  \mathbbm{1}_{B(x_1)}(T^{i'_l}x_l)d\mu(x_1) \right]d\mu^{k-1}(2,k)\nonumber\\
&&\leq\int_{X^{k-1}}\prod_{j=2}^{k-1} \prod_{l=j+1}^k \mathbbm{1}_{B(T^{i_j}x_j)}(T^{i_l}x_l)  \mathbbm{1}_{B(T^{i'_j}x_j)}(T^{i'_l}x_l)\nonumber\\
& &\hspace{0.5cm} \times\left[\int_X \prod_{l=2}^k \mathbbm{1}_{B(x_1)}(T^{i_l}x_l)d\mu(x_1)\right] \left[\int_X \prod_{l=2}^k \mathbbm{1}_{B(x_1)}(T^{i'_l}x_l)d\mu(x_1) \right]d\mu^{k-1}(2,k)+c^2r_n^{-2\xi}\theta_g\nonumber\\
&=:&I+c^2r_n^{-2\xi}\theta_g.\nonumber
\end{eqnarray}

Now we use that $i_2-i'_2>g$ and the same ideas to find that
\begin{eqnarray}
I&=& \int_{X^{k-2}}\prod_{j=3}^{k-1} \prod_{l=j+1}^k \mathbbm{1}_{B(T^{i_j}x_j)}(T^{i_l}x_l)  \mathbbm{1}_{B(T^{i'_j}x_j)}(T^{i'_l}x_l)\nonumber\\
& & \times\int_X\left[\prod_{l=3}^k \mathbbm{1}_{B(T^{i_2-i'_2}x_2)}(T^{i_l}x_l)\int_X \mathbbm{1}_{B(x_1)}(T^{i_2-i'_2}x_2)\prod_{l=3}^k \mathbbm{1}_{B(x_1)}(T^{i_l}x_l)d\mu(x_1)\right] \label{z}\\
& &\qquad\times\left[\prod_{l=3}^k \mathbbm{1}_{B(x_2)}(T^{i'_l}x_l)\int_X \mathbbm{1}_{B(x_1)}(x_2)\prod_{l=3}^k \mathbbm{1}_{B(x_1)}(T^{i'_l}x_l)d\mu(x_1) \right]d\mu(x_2)d\mu^{k-2}(3,k)\nonumber\\
&&\leq \int_{X^{k-2}}\prod_{j=3}^{k-1} \prod_{l=j+1}^k \mathbbm{1}_{B(T^{i_j}x_j)}(T^{i_l}x_l)  \mathbbm{1}_{B(T^{i'_j}x_j)}(T^{i'_l}x_l)\nonumber\\
& & \times\int_{X^2}\mathbbm{1}_{B(x_1)}(x_2)\prod_{l=3}^k  \mathbbm{1}_{B(x_1)}(T^{i_l}x_l)\mathbbm{1}_{B(x_2)}(T^{i_l}x_l)d\mu^2(x_1,x_2) \nonumber\\
& & \times\int_{X^2}\mathbbm{1}_{B(x_1)}(x_2)\prod_{l=3}^k  \mathbbm{1}_{B(x_1)}(T^{i'_l}x_l)\mathbbm{1}_{B(x_2)}(T^{i'_l}x_l)d\mu^2(x_1,x_2) d\mu^{k-2}((3,k))+c^2r_n^{-2\xi}\theta_g.\nonumber
\end{eqnarray}

Applying this argument again we will have on the $p$-th step  ($i_p-i'_p>g$) 
\begin{eqnarray}
&&\int_{X^k}\prod_{j=1}^{k-1} \prod_{l=j+1}^k \mathbbm{1}_{B(T^{i_j}x_j)}(T^{i_l}x_l)  \mathbbm{1}_{B(T^{i'_j}x_j)}(T^{i'_l}x_l)d\mu^k(1,k)\label{eqmixp}\\
&& \leq\int_{X^{k-p}}\prod_{j=p+1}^{k-1} \prod_{l=j+1}^k \mathbbm{1}_{B(T^{i_j}x_j)}(T^{i_l}x_l)  \mathbbm{1}_{B(T^{i'_j}x_j)}(T^{i'_l}x_l)\nonumber\\
&&\hspace{0.5cm}  \times\int_{X^p}\left[\prod_{j=1}^{p-1}\prod_{l=j+1}^{p}  \mathbbm{1}_{B(x_j)}(x_l)\right]\left[\prod_{j=1}^p\prod_{l=p+1}^k  \mathbbm{1}_{B(x_j)}(T^{i_l}x_l)\right]d\mu^p(1,p) \nonumber\\
&  &\hspace{0.7cm}\times \int_{X^p}\left[\prod_{j=1}^{p-1}\prod_{l=j+1}^{p}  \mathbbm{1}_{B(x_j)}(x_l)\right]\left[\prod_{j=1}^p\prod_{l=p+1}^k  \mathbbm{1}_{B(x_j)}(T^{i'_l}x_l)\right]d\mu^p(1,p) d\mu^{k-p}({p+1},k)\nonumber\\
&&\hspace{0.8cm}+c^2pr_n^{-2\xi}\theta_g=:II+c^2pr_n^{-2\xi}\theta_g.\nonumber
\end{eqnarray}

Therefore, when $i_1-i'_1>g$, $i_2-i'_2>g$,\dots, $i_k-i'_k>g$ we have 
\begin{eqnarray}
& &\int_{X^k}\prod_{j=1}^{k-1} \prod_{l=j+1}^k \mathbbm{1}_{B(T^{i_j}x_j)}(T^{i_l}x_l)  \mathbbm{1}_{B(T^{i'_j}x_j)}(T^{i'_l}x_l)d\mu^k(1,k)\nonumber\\
&&\hspace{1cm}\leq c^2kr_n^{-2\xi}\theta_g+\left(\int_{X^k}\prod_{j=1}^{k-1} \prod_{l=j+1}^k \mathbbm{1}_{B(x_j)}(x_l)d\mu^k(1,k)\right)^2.\label{eqmixk}
\end{eqnarray}

Now, if $i_1-i'_1>g$, $i_2-i'_2\leq g$,\dots, $i_k-i'_k\leq g$, we first proceed as in \eqref{eqmix1} and then, to estimate the term $I$, we use H\"older's inequality to find that
\begin{eqnarray}
&I=& \int_{X^{k-1}}\prod_{j=2}^{k-1} \prod_{l=j+1}^k \mathbbm{1}_{B(T^{i_j}x_j)}(T^{i_l}x_l) \left[\int_X \prod_{l=2}^k \mathbbm{1}_{B(x_1)}(T^{i_l}x_l)d\mu(x_1)\right] \nonumber\\
& & \times\prod_{j=2}^{k-1} \prod_{l=j+1}^k\mathbbm{1}_{B(T^{i'_j}x_j)}(T^{i'_l}x_l) \left[\int_X \prod_{l=2}^k \mathbbm{1}_{B(x_1)}(T^{i'_l}x_l)d\mu(x_1) \right]d\mu^{k-1}(2,k)\nonumber\\
&&\leq \left(\int_{X^{k-1}}\left(\prod_{j=2}^{k-1} \prod_{l=j+1}^k \mathbbm{1}_{B(T^{i_j}x_j)}(T^{i_l}x_l) \left[\int_X \prod_{l=2}^k \mathbbm{1}_{B(x_1)}(T^{i_l}x_l)d\mu(x_1)\right]\right)^2 d\mu^{k-1}(2,k)\right)^{1/2}\nonumber\\
& & \times\left(\int_{X^{k-1}}\left(\prod_{j=2}^{k-1} \prod_{l=j+1}^k \mathbbm{1}_{B(T^{i_j}x_j)}(T^{i'_l}x_l) \left[\int_X \prod_{l=2}^k \mathbbm{1}_{B(x_1)}(T^{i'_l}x_l)d\mu(x_1)\right]\right)^2 d\mu^{k-1}(2,k)\right)^{1/2}.\nonumber
\end{eqnarray}
Finally we use the invariance of $\mu$ to conclude that
\begin{eqnarray}
&I\leq& \int_{X^{k-1}}\left(\prod_{j=2}^{k-1} \prod_{l=j+1}^k \mathbbm{1}_{B(x_j)}(x_l) \left[\int_X \prod_{l=2}^k \mathbbm{1}_{B(x_1)}(x_l)d\mu(x_1)\right]\right)^2 d\mu^{k-1}(2,k)\nonumber\\
&=& \int_{X^{k-1}}\prod_{j=2}^{k-1} \prod_{l=j+1}^k \mathbbm{1}_{B(x_j)}(x_l) \left[\int_X \prod_{l=2}^k \mathbbm{1}_{B(x_1)}(x_l)d\mu(x_1)\right]^2 d\mu^{k-1}(2,k).\label{eqfinmix1}
\end{eqnarray}

In the case $i_1-i'_1>g$,\dots,$i_p-i'_p> g$ and $i_{p+1}-i'_{p+1}\leq g$,\dots, $i_k-i'_k\leq g$  we proceed as in \eqref{eqmix1}- \eqref{eqmixp} and then we use Holder's inequality to infer that
\begin{eqnarray*}
&II=&\int_{X^{k-p}} f(p+1,k)g(p+1,k) d\mu^{k-p}(p+1,k)\\
&\leq&\left(\int_{X^{k-p}} f^2(p+1,k)d\mu^{k-p}(p+1,k)\right)^{1/2}\left(\int_{X^{k-p}} g^2(p+1,k)d\mu^{k-p}(p+1,k)\right)^{1/2},\\
\end{eqnarray*}
where $f(p+1,k)$ denotes the function $f(x_{p+1},\dots,x_k)$ defined as $$f(p+1,k)=\prod_{j=p+1}^{k-1} \prod_{l=j+1}^k \mathbbm{1}_{B(T^{i_j}x_j)}(T^{i_l}x_l)\int_{X^p}\prod_{j=1}^{p-1}\prod_{l=j+1}^{p}  \mathbbm{1}_{B(x_j)}(x_l)\prod_{j=1}^p\prod_{l=p+1}^k  \mathbbm{1}_{B(x_j)}(T^{i_l}x_l)d\mu^p(1,p),$$ 
and analogously for $g(p+1,k)=g(x_{p+1},\dots,x_k)$
$$g(p+1,k)=\prod_{j=p+1}^{k-1} \prod_{l=j+1}^k \mathbbm{1}_{B(T^{i'_j}x_j)}(T^{i'_l}x_l)\int_{X^p}\prod_{j=1}^{p-1}\prod_{l=j+1}^{p}  \mathbbm{1}_{B(x_j)}(x_l)\prod_{j=1}^p\prod_{l=p+1}^k  \mathbbm{1}_{B(x_j)}(T^{i'_l}x_l)d\mu^p(1,p).$$

Then  we use the invariance of $\mu$ to infer that
\begin{equation}
II\leq\int_{X^{k-p}}\prod_{j=p+1}^{k-1} \prod_{l=j+1}^k \mathbbm{1}_{B(x_j)}(x_l)\left(\int_{X^p}\prod_{j=1}^{p}\prod_{l=j+1}^{k}  \mathbbm{1}_{B(x_j)}(x_l)d\mu^p(1,p)\right)^2d\mu^{k-p}(p+1,k). \label{eqfinmixp}
\end{equation}
Finally we observe that if $i_1-i'_1\leq g$,\dots, $i_k-i'_k\leq g$ then
\begin{eqnarray}
& &\hspace{-0.5cm}\int_{X^k}\prod_{j=1}^{k-1} \prod_{l=j+1}^k \mathbbm{1}_{B(T^{i_j}x_j)}(T^{i_l}x_l)  \mathbbm{1}_{B(T^{i'_j}x_j)}(T^{i'_l}x_l)d\mu^k(1,k)\leq\int_{X^k}\prod_{j=1}^{k-1} \prod_{l=j+1}^k \mathbbm{1}_{B(T^{i_j}x_j)}(T^{i_l}x_l) d\mu^k(1,k)\nonumber\\
&&\hspace{0.5cm}=\int_{X^k}\prod_{j=1}^{k-1} \prod_{l=j+1}^k \mathbbm{1}_{B(x_j)}(x_l) d\mu^k(1,k)=n^{-k}\E(S_n).\label{eqnomix}
\end{eqnarray}

One can notice that all the other cases can be treated by symmetry.

Thus from \eqref{eqmnvarsn} and  \eqref{eqmixk}- \eqref{eqnomix} we conclude that
\begin{eqnarray*}
& &\mu^k \left(m_n(x_1,\dots,x_k)\geq r_n\right)
\leq\frac{1}{\E (S_n)^2}\Bigg[n^{2k}c^2kr_n^{-2\xi}\theta_g+\sum_{p=1}^{k-1}\binom{k}{p}n^{2p+k-p}g^{k-p}\times\\
&&\hspace{0.5cm}\times\int_{X^{k-p}}\prod_{j=p+1}^{k-1} \prod_{l=j+1}^k \mathbbm{1}_{B(x_j)}(x_l)\left(\int_{X^p}\prod_{j=1}^{p}\prod_{l=j+1}^{k}  \mathbbm{1}_{B(x_j)}(x_l)d\mu^p\right)^2d\mu^{k-p}(p+1,k)\nonumber \\
& &\hspace{0.5cm}+\sum_{p=1}^{k-1}\binom{k}{p}n^{2p+k-p}g^{k-p}c^2pr_n^{-2\xi}\theta_g+g^k \E (S_n)\Bigg].\nonumber
\end{eqnarray*}
Thus, by Lemma \ref{lema1}, we deduce
\begin{eqnarray*}
& &\mu^k \left(m_n(x_1,\dots,x_k)\geq r_n\right)
\leq\frac{1}{\E (S_n)^2}\Bigg[n^{2k}c^2kr_n^{-2\xi}\theta_g+\sum_{p=1}^{k-1}\binom{k}{p}n^{p+k}g^{k-p}c^2pr_n^{-2\xi}\theta_g\nonumber\\
& &+\sum_{p=1}^{k-1}\binom{k}{p}n^{p+k}g^{k-p}\left(n^{-k}\E(S_n)\right)^{(p+k)/k}
+g^k \E (S_n)\Bigg]\nonumber\\
&&=\theta_g\frac{n^{2k}c^2kr_n^{-2\xi}+\sum_{p=1}^{k-1}\binom{k}{p}n^{p+k}g^{k-p}c^2pr_n^{-2\xi}}{\E (S_n)^2}+\sum_{p=1}^{k-1}\frac{\binom{k}{p}g^{k-p}}{\E(S_n)^{2-(p+k)/k}}+\frac{g^k}{ \E (S_n)}.
\end{eqnarray*}
By definitions of $r_n$, $k_n$, \eqref{eqexp} and Lemma~\ref{lemmagendim}, we observe that for $n$ large enough we have
\[\E(S_n)\geq(\log n)^{-b}.\]
Since $g=\log\left(n^\gamma\right)$ we have for  $\gamma$ large enough  that
\[\theta_g\frac{n^{2k}c^2kr_n^{-2\xi}+\sum_{p=1}^{k-1}\binom{k}{p}n^{p+k}g^{k-p}c^2pr_n^{-2\xi}}{\E (S_n)^2}=\mathcal{O}\left(\frac{1}{\log n}\right).\]
Then, we can choose $b\ll-1$ such that 
\[\sum_{p=1}^{k-1}\frac{\binom{k}{p}g^{k-p}}{\E(S_n)^{2-(p+k)/k}}+\frac{g^k}{ \E (S_n)}\leq\sum_{p=1}^{k-1}\frac{\binom{k}{p}g^{k-p}}{(\log n)^{-b(k-p)/k}}+\frac{g^k}{ (\log n)^{-b}}=\mathcal{O}\left(\frac{1}{\log n}\right),\]
and we have
\begin{equation}\label{eqfindsup}
\mu^k \left(m_n(x_1,\dots,x_k)\geq r_n\right)=\mathcal{O}\left(\frac{1}{\log n}\right).
\end{equation}
To conclude the proof it suffices to take a subsequence $n_\ell$ and use Borel-Cantelli Lemma as in the proof of \eqref{eqren1}.
\end{proof}


In order to simplify the proof of Theorem~\ref{thdsuphold}, we state and prove  the following technical lemma:
\begin{lemma}\label{lemmaHA} 
Let $\varphi$ be given by
\[ \varphi(x_p)=\int_{X^{p-1}}\left[\prod_{j=1}^{p-1}\prod_{l=j+1}^{k}  \mathbbm{1}_{B(x_j)}(x_l)\right]d\mu^{p-1}(1,{p-1}).\]
and suppose that (HA) is satisfied. Then there exist $0< r_0<1$, $c>0$ and $\zeta \geq 0$ such that for every $p\in\{2,\dots,k\}$, for $\mu^{k-p}$-almost every $x_{p+1},\dots,x_k\in X$ and for any $0<r<r_0$, the function $\varphi$ belongs to $\mathcal{H}^{\alpha}(X,\R)$ and
$$
||\varphi||_{\mathcal{H}^{\alpha}} 
\leq cr^{-\zeta}.
$$

\end{lemma}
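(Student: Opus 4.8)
The plan is to bound the Hölder norm $\|\varphi\|_{\mathcal{H}^\alpha} = \|\varphi\|_\infty + |\varphi|_\alpha$ of the function
\[
\varphi(x_p)=\int_{X^{p-1}}\Big[\prod_{j=1}^{p-1}\prod_{l=j+1}^{k}\mathbbm{1}_{B(x_j)}(x_l)\Big]d\mu^{p-1}(1,p-1)
\]
by two separate estimates, the supremum bound being elementary and the seminorm bound being where all the work is. First I would observe that $\varphi$ is dominated by the integrand with only the factors involving the variable $x_p$ kept, so that $\varphi(x_p)\le \mu(B(x_p,r))^{p-1}\le 1$; this already gives $\|\varphi\|_\infty\le 1$ (in fact $\varphi(x_p)\le C_0^{p-1}r^{N(p-1)}$ if one wants a sharper bound, but for the lemma the crude bound suffices). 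So the whole content is the $\alpha$-seminorm.

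For the seminorm, fix $x_p, \tilde x_p\in X$ and set $\rho = d(x_p,\tilde x_p)$; I would estimate $|\varphi(x_p)-\varphi(\tilde x_p)|$. Split the integrand using the product structure: the only dependence on $x_p$ is through the factors $\mathbbm{1}_{B(x_j)}(x_p)$ for $j=1,\dots,p-1$, i.e. through the indicator of the event that $x_p$ lies within $r$ of each of $x_1,\dots,x_{p-1}$. Writing $F = \prod_{j=1}^{p-1}\mathbbm{1}_{B(x_j,r)}(x_p)$ and $\tilde F$ for the same with $\tilde x_p$, and $G$ for the block of factors not involving $x_p$ (which is bounded by $1$), a telescoping argument over the $p-1$ factors gives
\[
|\varphi(x_p)-\varphi(\tilde x_p)| \le \int_{X^{p-1}} |F-\tilde F|\, d\mu^{p-1} \le \sum_{j=1}^{p-1}\int_X \big|\mathbbm{1}_{B(x_j,r)}(x_p)-\mathbbm{1}_{B(x_j,r)}(\tilde x_p)\big|\, d\mu(x_j).
\]
The integrand here is supported on those $x_j$ for which exactly one of $x_p,\tilde x_p$ is within $r$; since $d(x_p,\tilde x_p)=\rho$, such $x_j$ must satisfy $r-\rho < d(x_j,x_p) \le r+\rho$ (using the triangle inequality), hence $x_j \in B(x_p,r+\rho)\setminus B(x_p,r-\rho)$. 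Therefore each term is at most $\mu\big(B(x_p,r+\rho)\setminus B(x_p,r-\rho)\big)$, and by (HA) this is $\le r^{-\xi}\rho^\beta$ for $\mu$-a.e.\ $x_p$ and all $r_0>r>\rho>0$. Summing, $|\varphi(x_p)-\varphi(\tilde x_p)|\le (p-1)r^{-\xi}\rho^\beta \le (k-1)r^{-\xi}d(x_p,\tilde x_p)^\beta$.

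It remains to reconcile the exponent $\beta$ coming from (HA) with the Hölder exponent $\alpha$ of the target space. If $\beta\ge\alpha$ and the space has finite diameter $D$, then $\rho^\beta \le D^{\beta-\alpha}\rho^\alpha$ for $\rho\le D$, so $|\varphi|_\alpha \le (k-1)D^{\beta-\alpha} r^{-\xi}$; combined with $\|\varphi\|_\infty\le 1 \le r^{-\xi}$ for $r<r_0<1$, this gives $\|\varphi\|_{\mathcal{H}^\alpha}\le c\, r^{-\zeta}$ with $\zeta=\xi$ and $c$ depending only on $k$, $D$, $\alpha$, $\beta$. (If instead $\beta<\alpha$ one works with the Hölder space of exponent $\beta$, or absorbs the discrepancy into a larger power of $r^{-1}$ on scales $\rho>r$ and uses the trivial bound $|\varphi(x_p)-\varphi(\tilde x_p)|\le 2\|\varphi\|_\infty$ on the remaining range, which still yields a bound of the form $c r^{-\zeta}$.) The exceptional null set in $x_{p+1},\dots,x_k$ is irrelevant here since $\varphi$ does not depend on those variables; the only almost-everywhere restriction is the one in (HA), in the variable $x_p$. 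I expect the main obstacle to be purely bookkeeping: making the telescoping over the $p-1$ factors clean, and carefully matching the ranges $\rho\le r$ versus $\rho>r$ so that a single bound $c r^{-\zeta}$ covers all pairs $x_p,\tilde x_p$; there is no conceptual difficulty beyond the one inequality (HA) supplies.
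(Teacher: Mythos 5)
Your proposal is correct and follows essentially the same route as the paper: telescope the product of $p-1$ indicators to reduce $|\varphi(x_p)-\varphi(\tilde x_p)|$ to $(p-1)\,\mu\bigl(B(x_p,r)\triangle B(\tilde x_p,r)\bigr)$, trap the symmetric difference inside the annulus $B(x_p,r+\rho)\setminus B(x_p,r-\rho)$, and invoke (HA). The case $\rho=d(x_p,\tilde x_p)\ge r$ that you flag as "bookkeeping to be matched" is handled in the paper simply by the trivial bound $|\varphi(x_p)-\varphi(\tilde x_p)|\le 2(p-1)\le (2(p-1)/r)\,\rho$; the mismatch between the exponent $\beta$ of (HA) and the H\"older exponent $\alpha$ that you raise is implicit in the paper's hypotheses as well, so that is not a gap on your part.
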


\begin{proof}

Let $0<r<r_0$ and $x,y\in X$.  We have
\begin{eqnarray*}
& &|\varphi(x)-\varphi(y)|=\left|\int_{X^{p-1}}\prod_{j=1}^{p-1}\prod_{\substack{l=j+1\\ l\neq p}}^{k}  \mathbbm{1}_{B(x_j)}(x_l)\left[\prod_{j=1}^{p-1}  \mathbbm{1}_{B(x_j)}(x)-\prod_{j=1}^{p-1}  \mathbbm{1}_{B(x_j)}(y)\right]d\mu^{p-1}(1,{p-1})\right|\\
&&\hspace{0.5cm}\leq\int_{X^{p-1}}\left|\prod_{j=1}^{p-1}  \mathbbm{1}_{B(x)}(x_j)-\prod_{j=1}^{p-1}  \mathbbm{1}_{B(y)}(x_j)\right|d\mu^{p-1}(1,{p-1})\\
&&\hspace{0.5cm}\leq\sum_{l=0}^{p-2}\int_{X^{p-1}}\left|\prod_{j=1}^{l}  \mathbbm{1}_{B(y)}(x_j)\prod_{j=l+1}^{p-1}  \mathbbm{1}_{B(x)}(x_j)-\prod_{j=1}^{l+1}  \mathbbm{1}_{B(y)}(x_j)\prod_{j=l+2}^{p-1}  \mathbbm{1}_{B(x)}(x_j)\right|d\mu^{p-1}(1,{p-1})\\
&&\hspace{0.5cm}\leq\sum_{l=0}^{p-2}\int_{X}\left|  \mathbbm{1}_{B(x)}(x_{l+1})-  \mathbbm{1}_{B(y)}(x_{l+1})\right|d\mu(x_{l+1})
=(p-1)\int_{X}\left|  \mathbbm{1}_{B(x)}(z)-  \mathbbm{1}_{B(y)}(z)\right|d\mu(z).
\end{eqnarray*}

If $d(x,y)\geq r $ then
\begin{equation}\label{phihold1}
 |\varphi(x)-\varphi(y)|\leq 2(p-1)
\leq\frac{2(p-1)}{r}d(x,y).
\end{equation}

If otherwise $d(x,y)<r $ then from (HA) we conclude that
\begin{eqnarray}
|\varphi(x)-\varphi(y)|&\leq&(p-1)\int_{X}\left|  \mathbbm{1}_{B(x)}(z)-  \mathbbm{1}_{B(y)}(z)\right|d\mu(z)\label{phihold2}\\
&&\leq(p-1)  \left[\mu\left(B(x,r)\backslash \left(B(x,r)\cap B(y,r)\right)\right)+\mu\left(B(y,r)\backslash \left(B(x,r)\cap B(y,r)\right)\right)\right]\nonumber\\
&&\leq(p-1)  \mu\left(B(x,r+d(x,y))\backslash B(x,r-d(x,y))\right)\leq(p-1)r^{-\xi}d(x,y)^{\beta},\nonumber
\end{eqnarray}

and the lemma follows from inequalities \eqref{phihold1} and \eqref{phihold2}.

\end{proof}

\begin{proof}[Proof of Theorem~\ref{thdsuphold}]
When our Banach space $\mathcal{C}$ is the space of H\"older functions, (H2) cannot be satisfied since characteristic functions are not continuous. Thus, we need to adapt the proof of Theorem~\ref{dsup} to this setting, approximating characteristic functions by Lipschitz functions, following the construction of the proof of Lemma 9 in \cite{Saussol1}.  We will only prove the key part here, which is obtaining the equivalent of our inequality \eqref{eqmixp}.

To do so we fix $q\in\{1,\dots,p\}$ and consider the the following term: 
\begin{eqnarray*}
& &I:= \int_{X}\big[\prod_{l=q+1}^k \mathbbm{1}_{B(T^{i_{q}}x_{q})}(T^{i_l}x_l)  \mathbbm{1}_{B(T^{i'_{q}}x_{q})}(T^{i'_l}x_l)\nonumber\\
& &\hspace{0.5cm} \times\int_{X^{q-1}}\left[\prod_{j=1}^{q-2}\prod_{l=j+1}^{q-1}  \mathbbm{1}_{B(x_j)}(x_l)\right]\left[\prod_{j=1}^{q-1}\prod_{l=q}^k  \mathbbm{1}_{B(x_j)}(T^{i_l}x_l)\right]d\mu^{q-1}(1,{q-1}) \\
& & \hspace{0.5cm}\times\int_{X^{q-1}}\left[\prod_{j=1}^{q-2}\prod_{l=j+1}^{q-1}  \mathbbm{1}_{B(x_j)}(x_l)\right]\left[\prod_{j=1}^{q-1}\prod_{l=q}^k  \mathbbm{1}_{B(x_j)}(T^{i'_l}x_l)\right]d\mu^{q-1}(1,{q-1}) \big]d\mu(x_{q}).\nonumber
\end{eqnarray*}
We assume $|i_q-i_q'|>g$. 
Let $\rho>0$ (to be choosen later). Let $\eta_{r_n}:[0, \infty) \to \R$ be the $\frac{1}{{\rho r_n}}$-Lipschitz function such that $\mathbbm{1}_{[0,{r_n}]} \leq \eta_{r_n} \leq \mathbbm{1}_{[0,(1+\rho){r_n}]}$ and set
 \begin{equation}\label{defvarphi}
 \varphi_{x_{q+1},...,x_k,{r_n}}(x)= \prod_{l=q+1}^{k} \eta_{r_n}(d(x,x_l)).
 \end{equation}
 We observe that $\varphi_{x_{q+1},...,x_k,{r_n}}$ is $\frac{k-q}{\rho r_n}$-Lipschitz. Moreover, we have
\begin{equation}\label{eqapproxlip}
\prod_{l=q+1}^k \mathbbm{1}_{B(x,r_n)}(T^{i_l}x_l) \leq \varphi_{T^{i_{q+1}}x_{q+1},...,T^{i_{k}}x_k,{r_n}}(x)\leq \prod_{l=q+1}^k \mathbbm{1}_{B(x,(1+\rho)r_n)}(T^{i_l}x_l). 
\end{equation}
Now we define the following auxiliary function
\begin{eqnarray}
& &\Phi_{T^{i_{q+1}}x_{q+1},...,T^{i_{k}}x_k,{r}}(x_q)=\varphi_{T^{i_{q+1}}x_{q+1},...,T^{i_{k}}x_k,{r}}(x_q)\times\label{bigphi}\\
&&\hspace{0.5cm}\times\int_{X^{q-1}}\left[\prod_{j=1}^{q-2}\prod_{l=j+1}^{q}  \mathbbm{1}_{B(x_j)}(x_l)\right]\left[\prod_{j=1}^{q-1}\prod_{l=q+1}^k  \mathbbm{1}_{B(x_j)}(T^{i_l}x_l)\right]d\mu^{q-1}(1,q-1).\nonumber
\end{eqnarray}
From Lemma \ref{lemmaHA}, we observe that for $\mu^{k-q+1}$-almost every $x_{q+1},\dots,x_k\in X$ and for any $0<r<r_0$ the function $\Phi$ belongs to $\mathcal{H}^{\alpha}(X,\R)$ and
$$
||\Phi_{T^{i_{q+1}}x_{q+1},...,T^{i_{k}}x_k,{r}}||_{\mathcal{H}^\alpha} \leq cr^{-\zeta}+(k-q)(\rho r)^{-1}\leq cr^{-\zeta}+k(\rho r)^{-1}.
$$
Using (H1), \eqref{eqapproxlip} and \eqref{bigphi} we deduce that
 \begin{eqnarray}
&I\leq &\int_{X}\Phi_{T^{i_{q+1}}x_{q+1},...,T^{i_{k}}x_k,{r_n}}(T^{i_q}x_q)\Phi_{T^{i'_{q+1}}x_{q+1},...,T^{i'_{k}}x_k,{r_n}}(T^{i'_{q}}x_q)d\mu(x_q)\nonumber\\
&\leq &\int_{X}\Phi_{T^{i_{q+1}}x_{q+1},...,T^{i_{k}}x_k,{r_n}}(x_q)d\mu(x_q)\int_{X}\Phi_{T^{i'_{q+1}}x_{q+1},...,T^{i'_{k}}x_k,{r_n}}(x_q)d\mu(x_q)\nonumber\\
& +& \theta_g \lt\Vert\Phi_{T^{i_{q+1}}x_{q+1},...,T^{i_{k}}x_k,{r_n}}\rt\Vert_{\mathcal{H}^\alpha} \lt\Vert \Phi_{T^{i'_{q+1}}x_{q+1},...,T^{i'_{k}}x_k,{r_n}} \rt\Vert_{\mathcal{H}^\alpha} \nonumber\\
&\leq &\int_{X}\Phi_{T^{i_{q+1}}x_{q+1},...,T^{i_{k}}x_k,{r_n}}(x_q)d\mu(x_q)\int_{X}\Phi_{T^{i'_{q+1}}x_{q+1},...,T^{i'_{k}}x_k,{r_n}}(x_q)d\mu(x_q)\nonumber\\
& &+(cr_n^{-\zeta}+k(\rho r_n)^{-1})^2\theta_g.\label{bigphimix}
\end{eqnarray}

At this point we observe that the inequality \eqref{bigphimix} together with \eqref{eqapproxlip} will not be sufficient to obtain our equivalent of \eqref{eqmixp} since the radius of the balls will be $(1+\rho)r_n$ (instead of $r_n$). To overcome this problem we use (HA). For simplicity, we use the following notation:
\[g(x_q)=\int_{X^{q-1}}\left[\prod_{j=1}^{q-2}\prod_{l=j+1}^{q}  \mathbbm{1}_{B(x_j)}(x_l)\right]\left[\prod_{j=1}^{q-1}\prod_{l=q+1}^k  \mathbbm{1}_{B(x_j)}(T^{i_l}x_l)\right]d\mu^{q-1}(1,{q-1}).\]
Thus from \eqref{eqapproxlip} we know that
\begin{eqnarray}
& &\int_{X}\Phi_{T^{i_{q+1}}x_{q+1},...,T^{i_{k}}x_k,{r_n}}(x_q)d\mu(x_q)=\int_X \varphi_{T^{i_{q+1}}x_{q+1},...,T^{i_{k}}x_k,{r}}(x_q)g(x_p)d\mu(x_q)\label{ineqbigphiha}\\
&&\leq\int_X g(x_q)\prod_{l=q+1}^k \mathbbm{1}_{B(x_q,(1+\rho)r_n)}(T^{i_l}x_l)d\mu(x_q)=\int_X g(x_q)\prod_{l=q+1}^k \mathbbm{1}_{B(x_q,r_n)}(T^{i_l}x_l)d\mu(x_q)\nonumber\\
& &+\int_X g(x_q)\left(\prod_{l=q+1}^k \mathbbm{1}_{B(x_q,(1+\rho)r_n)}(T^{i_l}x_l)-\prod_{l=q+1}^k \mathbbm{1}_{B(x_q,r_n)}(T^{i_l}x_l)\right)d\mu(x_q)\nonumber\\
&&\leq\int_X g(x_q)\prod_{l=q+1}^k \mathbbm{1}_{B(x_q,r_n)}(T^{i_l}x_l)d\mu(x_q)+\mu\left(\bigcap_{l=q+1}^k B(T^{i_l}x_l,(1+\rho)r_n)\setminus \bigcap_{l=q+1}^k B(T^{i_l}x_l,r_n)\right).\nonumber
\end{eqnarray}

Using (HA) we conclude that
\begin{eqnarray}
& &\mu\left(\bigcap_{l=q+1}^k B(T^{i_l}x_l,(1+\rho)r_n)\setminus \bigcap_{l=q+1}^k B(T^{i_l}x_l,r_n)\right)\leq \mu\left(\bigcup_{l=q+1}^k B(T^{i_l}x_l,(1+\rho)r_n)\setminus B(T^{i_l}x_l,r_n)\right)\nonumber\\
& &\leq\sum_{l=q+1}^k\mu\left( B(T^{i_l}x_l,(1+\rho)r_n)\setminus B(T^{i_l}x_l,r_n)\right)\leq(k-q)r_n^{-\xi}\rho^{\beta}.\label{ineqbigphiha2}
\end{eqnarray}

Then from \eqref{ineqbigphiha} and \eqref{ineqbigphiha2} and  taking  $\rho$  small enough we infer that
\begin{eqnarray}
& &\int_{X}\Phi_{T^{i_{q+1}}x_{q+1},...,T^{i_{k}}x_k,{r_n}}(x_q)d\mu(x_q)\int_{X}\Phi_{T^{i'_{q+1}}x_{q+1},...,T^{i'_{k}}x_k,{r_n}}(x_q)d\mu(x_q)\nonumber\\
&\leq & \int_{X}\prod_{l=q+1}^k \mathbbm{1}_{B(x_{q})}(T^{i_l}x_l) \int_{X^{q-1}}\prod_{j=1}^{q-2}\prod_{l=j+1}^{q}  \mathbbm{1}_{B(x_j)}(x_l)\prod_{j=1}^{q-1}\prod_{l=q+1}^k  \mathbbm{1}_{B(x_j)}(T^{i_l}x_l)d\mu^{q-1}(1,q-1)d\mu(x_{q}) \nonumber\\
& &\!\!\!\times \int_{X}\prod_{l=q+1}^k \mathbbm{1}_{B(x_{q})}(T^{i'_l}x_l)\int_{X^{q-1}}\prod_{j=1}^{q-2}\prod_{l=j+1}^{q}  \mathbbm{1}_{B(x_j)}(x_l)\prod_{j=1}^{q-1}\prod_{l=q+1}^k  \mathbbm{1}_{B(x_j)}(T^{i'_l}x_l)d\mu^{q-1}(1,q-1) d\mu(x_{q})\nonumber\\
& &+3(k-q)r_n^{-\xi}\rho^{\beta}\nonumber\\
&= &  \int_{X^{q}}\left[\prod_{j=1}^{q-2}\prod_{l=j+1}^{q}  \mathbbm{1}_{B(x_j)}(x_l)\right]\left[\prod_{j=1}^{q}\prod_{l=q+1}^k  \mathbbm{1}_{B(x_j)}(T^{i_l}x_l)\right]d\mu^{q}(1,q) \nonumber\\
& &\times\int_{X^{q}}\left[\prod_{j=1}^{q-2}\prod_{l=j+1}^{q}  \mathbbm{1}_{B(x_j)}(x_l)\right]\left[\prod_{j=1}^{q}\prod_{l=q+1}^k  \mathbbm{1}_{B(x_j)}(T^{i'_l}x_l)\right]d\mu^{q}(1,q) +3(k-q)r_n^{-\xi}\rho^{\beta}\label{ineqbigphifin}.
\end{eqnarray}
Thus, since $q\leq k$, \eqref{ineqbigphifin} together with \eqref{bigphimix} gives us
\begin{eqnarray*}
&I\leq&  \int_{X^{q}}\left[\prod_{j=1}^{q-2}\prod_{l=j+1}^{q}  \mathbbm{1}_{B(x_j)}(x_l)\right]\left[\prod_{j=1}^{q}\prod_{l=q+1}^k  \mathbbm{1}_{B(x_j)}(T^{i_l}x_l)\right]d\mu^{q}(1,{q}) \nonumber\\
& &\times\int_{X^{q}}\left[\prod_{j=1}^{q-2}\prod_{l=j+1}^{q}  \mathbbm{1}_{B(x_j)}(x_l)\right]\left[\prod_{j=1}^{q}\prod_{l=q+1}^k  \mathbbm{1}_{B(x_j)}(T^{i'_l}x_l)\right]d\mu^{q}(1,{q})\nonumber\\
& &+3kr_n^{-\xi}\rho^{\beta}+(cr_n^{-\zeta}+k(\rho r_n)^{-1})^2\theta_g.\label{bigphimix2}
\end{eqnarray*}

Repeating this process for each $q\in\{1,\dots,p\}$ we  obtain our equivalent of \eqref{eqmixp}
\begin{eqnarray}
& &\int_{X^k}\prod_{j=1}^{k-1} \prod_{l=j+1}^k \mathbbm{1}_{B(T^{i_j}x_j)}(T^{i_l}x_l)  \mathbbm{1}_{B(T^{i'_j}x_j)}(T^{i'_l}x_l)d\mu^k(1,k)\nonumber\\
&&\leq3pkr_n^{-\xi}\rho^{\beta}+p(cr_n^{-\zeta}+k(\rho r_n)^{-1})^2\theta_g+ \int_{X^{k-p}}\prod_{j=p+1}^{k-1} \prod_{l=j+1}^k \mathbbm{1}_{B(T^{i_j}x_j)}(T^{i_l}x_l)  \mathbbm{1}_{B(T^{i'_j}x_j)}(T^{i'_l}x_l)\nonumber\\
&  \times&\int_{X^p}\left[\prod_{j=1}^{p-1}\prod_{l=j+1}^{p}  \mathbbm{1}_{B(x_j)}(x_l)\right]\left[\prod_{j=1}^p\prod_{l=p+1}^k  \mathbbm{1}_{B(x_j)}(T^{i_l}x_l)\right]d\mu^p(1,p) \nonumber\\
&  \times&\int_{X^p}\left[\prod_{j=1}^{p-1}\prod_{l=j+1}^{p}  \mathbbm{1}_{B(x_j)}(x_l)\right]\left[\prod_{j=1}^p\prod_{l=p+1}^k  \mathbbm{1}_{B(x_j)}(T^{i'_l}x_l)\right]d\mu^p(1,p) d\mu^{k-p}(p+1k).\nonumber
\end{eqnarray}
Thus, the rest of the proof follows exactly as in Theorem~\ref{dsup} where at the end, one must choose $\rho=n^{-\delta}$ with $\delta$ large enough so that \eqref{eqfindsup} holds.

\end{proof}

\begin{proof}[Proof of Theorem~\ref{thprincobs}]
The proof follows the lines of the proof of Theorems~\ref{thineq} and \ref{thdsuphold}, replacing $S_n$ by
\begin{equation*}
S_n^f(x_1,\dots,x_k)=\sum_{i_1,\dots,i_k =0}^{n-1}\prod_{j=1}^{k-1} \prod_{l=j+1}^k \mathbbm{1}_{f^{-1}B(f(T^{i_j}x_j),r_n)}(T^{i_l}x_l).
\end{equation*}
Another modification worth mentioning is that in \eqref{defvarphi}, $\varphi_{x_{q+1},...,x_k,{r_n}}(x)$ must be replaced by
\begin{equation*}
 \varphi^f_{x_{q+1},...,x_k,{r_n}}(x)= \prod_{l=q+1}^{k} \eta_{r_n}(d(f(x),f(x_l))),
 \end{equation*}
which is a $\frac{L(k-q)}{\rho r_n}$-Lipschitz function (if $f$ is $L$-Lipschitz).
\end{proof}


\subsection*{Acknowledgements}The authors would like to thank Benoit Saussol for useful discussions.


\end{document}